\newtheorem{theorem}{Theorem}[section]
\newtheorem{lemma}{Lemma}[section]
\numberwithin{equation}{section}
\author[G. Nemes]{Gerg\H{o} Nemes}
\address{Central European University, Department of Mathematics and its Applications, H-1051 Budapest, N\'ador utca 9, Hungary}
\email{nemesgery@gmail.com}
\keywords{asymptotic expansions, incomplete gamma function, error bounds, Stokes phenomenon, resurgence, late coefficients}
\subjclass[2010]{41A60, 33B20, 34M40}
\begin{document}

\title[Resurgence of the incomplete gamma function]{The resurgence properties of\\ the incomplete gamma function I}

\begin{abstract} In this paper we derive new representations for the incomplete gamma function, exploiting the reformulation of the method of steepest descents by C. J. Howls (Howls, Proc. R. Soc. Lond. A \textbf{439} (1992) 373--396). Using these representations, we obtain a number of properties of the asymptotic expansions of the incomplete gamma function with large arguments, including explicit and realistic error bounds, asymptotics for the late coefficients, exponentially improved asymptotic expansions, and the smooth transition of the Stokes discontinuities.
\end{abstract}
\maketitle

\section{Introduction and main results}

It is known that, as $a \to \infty$ in the sector $\left|\arg a\right|\leq \frac{3\pi}{2}-\delta< \frac{3\pi}{2}$, for any $0 < \delta \leq \frac{3\pi}{2}$, the incomplete gamma function has the following asymptotic expansion
\begin{equation}\label{eq43}
\Gamma \left( {a,z} \right) \sim z^a e^{ - z} \sum\limits_{n = 0}^\infty  {\frac{{\left( { - a} \right)^n b_n \left( \lambda  \right)}}{{\left( {z - a} \right)^{2n + 1} }}} ,
\end{equation}
with $z = \lambda a$, $\lambda >1$ (see, e.g., \cite[8.11.iii]{NIST} and \cite{Tricomi}). For this series to be an asymptotic series in Poincar\'e's sense, it has to be assumed that $\left( {\lambda  - 1} \right)^{ - 1}  = o\left( {\left|a\right|^{\frac{1}{2}} } \right)$. The coefficients $b_n \left( \lambda  \right)$ are polynomials in $\lambda$ of degree $n$, the first few being
\[
b_0\left( \lambda  \right)=1,\; b_1\left( \lambda  \right)=\lambda, \; b_2\left( \lambda  \right)=2\lambda^2+\lambda,\; b_3 \left( \lambda  \right) = 6\lambda ^3  + 8\lambda ^2+\lambda.
\]
When $\lambda =1$, we have the alternative expansion
\begin{equation}\label{eq44}
\Gamma \left( {z,z} \right) \sim \sqrt {\frac{\pi }{2}} z^{z - \frac{1}{2}} e^{ - z} \sum\limits_{n = 0}^\infty {\frac{{a_n }}{{z^{\frac{n}{2}} }}},
\end{equation}
as $z \to \infty$ in the sector $\left|\arg z\right|\leq \pi-\delta< \pi$, for any $0 < \delta \leq \pi$ \cite[8.11.E12]{NIST}. Note that this asymptotic series is not the limiting case $\lambda \to 1+0$ of the expansion \eqref{eq43}. The first few coefficients $a_n$ are as follows
\[
a_0  = 1,\; a_1  =  - \frac{1}{3}\sqrt {\frac{2}{\pi }} ,\; a_2  = \frac{1}{{12}},\; a_3  =- \frac{4}{{135}}\sqrt {\frac{2}{\pi }},\; a_4=\frac{1}{288}.
\]
It will turn out later in this paper that the asymptotic series \eqref{eq44} is actually valid in the much wider region $\left|\arg z\right|\leq 2\pi-\delta< 2\pi$, for any $0 < \delta \leq 2\pi$ (see the end of Subsection \ref{subsection53}).

Asymptotic series for the coefficients $b_n \left( \lambda  \right)$ and $a_n$ were established by Dingle \cite[p. 161 and p. 164]{Dingle}. He also gave re-expansions of the remainder terms leading to exponentially improved versions of the asymptotic series \eqref{eq43} and \eqref{eq44} \cite[pp. 463--464]{Dingle}. Nevertheless, the derivation of his results is based on interpretative, rather than rigorous, methods.

The aim of this paper is to establish new resurgence-type integral representations for the functions $\Gamma \left( {a,z} \right)$ and $\Gamma \left( {z,z} \right)$. Our derivation is based on the reformulation of the method of steepest descents by Howls \cite{Howls}, and on the following integral representation (see, e.g., \cite[8.6.E7]{NIST})
\begin{equation}\label{eq0}
\Gamma \left( {a,z} \right) = z^a \int_0^{ + \infty } {\exp \left( {at - ze^t } \right)dt} \; \text{ for } \; \Re \left( z \right) > 0 .
\end{equation}
The resurgence has to be understood in the sense of Berry and Howls \cite{Berry}, meaning that the function (or a closely related function) reappears in the remainder of its own asymptotic series. Using these representations, we obtain several new properties of the expansions \eqref{eq43} and \eqref{eq44}, including explicit and numerically computable error bounds, asymptotics for the late coefficients, exponentially improved asymptotic expansions, and the smooth transition of the Stokes discontinuities. Our analysis also provides a rigorous treatment of Dingle's formal results.

In our resurgence-type formulas, the function that makes its appearance in the remainder terms, is the scaled gamma function. This function is defined in terms of the classical gamma function as
\[
\Gamma^\ast \left( z \right) = \frac{{\Gamma \left( z \right)}}{{\sqrt {2\pi } z^{z - \frac{1}{2}} e^{ - z} }},
\]
for $\left|\arg z\right|<\pi$.

Our first theorem describes the resurgence properties of the asymptotic expansion \eqref{eq43}. Throughout this paper, empty sums are taken to be zero.

\begin{theorem}\label{thm1} Let $\lambda >1$ be a fixed real number such that $z=\lambda a$, and let $N$ be a non-negative integer. Then
\begin{equation}\label{eq22}
\Gamma \left( {a,z} \right) =  z^a e^{ - z} \left( {\sum\limits_{n = 0}^{N - 1} {\frac{{\left( { - a} \right)^n b_n \left( \lambda  \right)}}{{\left( {z - a} \right)^{2n + 1} }}}  + R_N \left( {a,\lambda } \right)} \right)
\end{equation}
for $\left|\arg a\right|<\pi$, with
\begin{gather}\label{eq10}
\begin{split}
b_n \left( \lambda  \right) & = \left( { - 1} \right)^n \left( {\lambda -1} \right)^n \left[ {\frac{{d^n }}{{dt^n }}\left( {\frac{{\left( {\lambda-1 } \right)t}}{{\lambda e^{ t} - t - \lambda }}} \right)^{n + 1} } \right]_{t = 0} \\ & = \frac{{\left( {\lambda  - 1} \right)^{2n + 1} }}{{\sqrt {2\pi } }}\int_0^{ + \infty } {\frac{{t^{n - \frac{1}{2}} e^{ - t\left( {\lambda  - \log \lambda  - 1} \right)} }}{{\Gamma^\ast \left( t \right)}}dt} 
\end{split}
\end{gather}
and
\begin{equation}\label{eq9}
R_N \left( {a,\lambda } \right) = \frac{{\left( { - a} \right)^N }}{{\left( {z - a} \right)^{2N + 1} }}\frac{{\left( {\lambda  - 1} \right)^{2N + 1} }}{{\sqrt {2\pi } }}\int_0^{ + \infty } {\frac{{t^{N - \frac{1}{2}} e^{ - t\left( {\lambda  - \log \lambda  - 1} \right)} }}{{1 + t/a}}\frac{{dt}}{{\Gamma^\ast \left( t \right)}}} .
\end{equation}
\end{theorem}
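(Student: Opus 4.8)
The plan is to start from the exact integral representation \eqref{eq0} and apply Howls' reformulation of the method of steepest descents. First I would substitute $z=\lambda a$ into \eqref{eq0}, writing $\Gamma(a,z)=z^a\int_0^{+\infty}\exp\bigl(-a(\lambda e^t-t)\bigr)\,dt$, and recognize $\lambda e^t - t$ as the phase function. Its relevant saddle point on the integration contour is at $t=0$ (since $\frac{d}{dt}(\lambda e^t-t)=\lambda e^t-1$ vanishes when $e^t=1/\lambda$, i.e.\ $t=-\log\lambda<0$, which lies off the positive axis; so $t=0$ is the endpoint and the nearby competing saddle is at $t=-\log\lambda$). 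The value of the phase at the endpoint is $\lambda$, and at the competing saddle it is $1+\log\lambda$, so the "singulant" governing the exponentially small corrections has size $\lambda-1-\log\lambda=\lambda-\log\lambda-1$, which is exactly the constant appearing in the exponent in \eqref{eq10} and \eqref{eq9}. That is a strong sanity check that this is the right geometric picture.

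Next I would introduce the standard change of variable that maps the phase to a single new variable: set $\lambda e^t - t - \lambda = \tau$ along the steepest descent path from $t=0$, so that $\Gamma(a,z)=z^a e^{-\lambda a}\int_0^{+\infty} e^{-a\tau}\,\frac{dt}{d\tau}\,d\tau$. Expanding $\frac{dt}{d\tau}$ — equivalently $\left(\frac{dt}{d\tau}\right)$ as a function of $\tau$ via Lagrange inversion of $\tau=\lambda e^t-t-\lambda$ — gives the coefficients $b_n(\lambda)$; the Lagrange-inversion formula is precisely the first line of \eqref{eq10}, with the $(\lambda-1)$ scalings absorbed to match the normalization $(z-a)^{2n+1}=a^{2n+1}(\lambda-1)^{2n+1}$ in \eqref{eq22}. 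Rather than truncating naively, I would use Howls' device of writing, for the remainder after $N$ terms, $\frac{dt}{d\tau}$ minus its degree-$(N-1)$ Taylor polynomial as a Cauchy-type contour integral encircling the origin in the $\tau$-plane, picking up the contribution of the adjacent saddle at $\tau = -(\lambda-\log\lambda-1)\cdot(\text{something})$; the contour can be deformed to wrap the branch emanating from that saddle, which converts the contour integral into an integral over the real variable $t$ along the steepest descent path through the competing saddle. On that path the phase difference is $t(\lambda-\log\lambda-1)$ (after rescaling), and the Jacobian of the competing-saddle path is exactly what produces the factor $1/\Gamma^\ast(t)$ — this is the "resurgence": the remainder is expressed through a function intimately related to $\Gamma$ itself. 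Interchanging the order of integration and doing the resulting elementary $\tau$- or $a$-integral (a geometric series summed to the $N$-th remainder) yields the $\frac{1}{1+t/a}$ factor in \eqref{eq9}, and expanding $\frac{1}{1+t/a}$ as a geometric series and integrating term by term reproduces the second line of \eqref{eq10}.

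The step I expect to be the main obstacle is the rigorous justification of the contour manipulation: one must show that the steepest descent contour from the endpoint $t=0$ can legitimately be deformed so that the "remainder" contour integral genuinely reduces to an integral along the steepest descent path of the \emph{adjacent} saddle $t=-\log\lambda$, with no extra contributions from arcs at infinity and no further saddles interfering, and that this remains valid throughout $|\arg a|<\pi$. This requires a careful analysis of the global topology of the steepest descent paths of $\lambda e^t - t$ in the complex $t$-plane as $\arg a$ varies, identifying exactly which saddles are "adjacent" in Howls' sense, and checking that the single competing saddle at $t=-\log\lambda$ is the only one that matters. One also needs the analyticity and growth properties of $1/\Gamma^\ast(t)$ for $t>0$ (it is entire there and of controlled growth, so the final integral converges for $N\ge 0$ precisely because of the $t^{N-1/2}$ and the exponential decay $e^{-t(\lambda-\log\lambda-1)}$, using $\lambda-\log\lambda-1>0$ for $\lambda>1$), together with a dominated-convergence argument to justify interchanging summation/integration. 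Everything else — the Lagrange inversion producing \eqref{eq10}, the elementary geometric-series identity producing the $1/(1+t/a)$ kernel, and matching the powers of $(z-a)$ — is bookkeeping once the contour geometry is secured.
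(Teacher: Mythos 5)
Your proposal is correct and follows essentially the same route as the paper: the substitution $\tau=\lambda e^t-t-\lambda$, Howls' residue-theorem representation of $dt/d\tau$, the finite geometric expansion \eqref{eq12}, deformation of the loop contour onto the adjacent steepest-descent contour through the saddle $t=-\log\lambda$, and identification of the resulting loop integral with $1/\Gamma^\ast(t)$ via Hankel's formula. The only cosmetic difference is that the paper obtains the second line of \eqref{eq10} from the identity $b_n(\lambda)=\frac{(z-a)^{2n+1}}{(-a)^n}\left(R_n(a,\lambda)-R_{n+1}(a,\lambda)\right)$ rather than by expanding $1/(1+t/a)$, which is an equivalent bookkeeping step.
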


In the important paper \cite{Boyd}, Boyd gave the following resurgence formulas for the scaled gamma function and its reciprocal (see also \cite{Nemes}):
\begin{equation}\label{eq45}
\Gamma^\ast \left( z \right) = \sum\limits_{n = 0}^{N - 1} {\frac{{a_{2n} }}{{z^n }}}  + M_N \left( z \right) \; \text{ and } \;
\frac{1}{\Gamma^\ast \left( z \right)} = \sum\limits_{n = 0}^{N - 1} {\left( { - 1} \right)^n \frac{{a_{2n} }}{{z^n }}}  + \widetilde M_N \left( z \right),
\end{equation}
for $\left|\arg z\right| < \frac{\pi}{2}$ and $N\geq 1$, with
\begin{equation}\label{eq46}
M_N \left( z \right) = \frac{1}{{2\pi i}}\frac{i^N}{{z^N }}\int_0^{ + \infty } {\frac{{t^{N - 1} e^{ - 2\pi t} \Gamma^\ast \left( {it} \right)}}{{1 - it/z}}dt}  - \frac{1}{{2\pi i}}\frac{{\left( { - i} \right)^N }}{{z^N }}\int_0^{ + \infty } {\frac{{t^{N - 1} e^{ - 2\pi t} \Gamma^\ast \left( { - it} \right)}}{{1 + it/z}}dt} 
\end{equation}
and
\begin{equation}\label{eq47}
\widetilde M_N \left( z \right) = \frac{1}{{2\pi i}}\frac{{\left( { - i} \right)^N}}{{z^N }}\int_0^{ + \infty } {\frac{{t^{N - 1} e^{ - 2\pi t} \Gamma^\ast \left( {it} \right)}}{{1 + it/z}}dt}  - \frac{1}{{2\pi i}}\frac{{i^N }}{{z^N }}\int_0^{ + \infty } {\frac{{t^{N - 1} e^{ - 2\pi t} \Gamma^\ast \left( { - it} \right)}}{{1 - it/z}}dt} .
\end{equation}
Here the coefficients $a_{2n}$ are the same as those appearing in \eqref{eq44}. These representations of the scaled gamma function and its reciprocal will play an important role in later sections of this paper. We will also use the fact that $M_N \left( z \right),\widetilde M_N \left( z \right) =\mathcal{O}\left(\left|z\right|^{-N}\right)$ as $z \to \infty$ in the sector $\left|\arg z\right|\leq \pi-\delta< \pi$, for any fixed $0 < \delta \leq \pi$.

Applying the connection formulas
\begin{equation}\label{eq66}
\Gamma \left( {ae^{2\pi im} ,ze^{2\pi im} } \right) = \Gamma \left( {a,ze^{2\pi im} } \right) = e^{2\pi ima} \Gamma \left( {a,z} \right) + \left( {1 - e^{2\pi ima} } \right)\Gamma \left( a \right),
\end{equation}
\begin{equation}\label{eq51}
\Gamma^\ast \left( z \right) = \frac{1}{1 - e^{ \pm 2\pi iz}}\frac{1}{\Gamma^\ast \left( {ze^{ \mp \pi i} } \right)}
\; \text{ and } \; \Gamma^\ast  \left( z \right) =  - e^{ \pm 2\pi iz} \Gamma^\ast  \left( {ze^{ \pm 2\pi i} } \right) ,
\end{equation}
and the resurgence formulas \eqref{eq22}, \eqref{eq9}, \eqref{eq45}--\eqref{eq47}, we can derive analogous representations for $\Gamma \left( {a,z} \right)$ in sectors of the form
\[
\left( {2m - 1} \right)\pi  < \arg a < \left( {2m + 1} \right)\pi ,\quad m \in \mathbb{Z}.
\]
The lines $\arg a = \left( {2m + 1} \right)\pi$ ($m \in \mathbb{Z}$) are the Stokes lines for the asymptotic series \eqref{eq43}.

The second theorem provides a resurgence formula for $\Gamma \left( {z,z} \right)$.

\begin{theorem}\label{thm2} For any integer $N \geq 2$, we have
\[
\Gamma \left( {z,z} \right) = \sqrt {\frac{\pi }{2}} z^{z - \frac{1}{2}} e^{ - z} \left( {\sum\limits_{n = 0}^{N - 1} {\frac{{a_n }}{{z^{\frac{n}{2}} }}}  + R_N \left( z \right)} \right)
\]
for $\left|\arg z\right|<\frac{3\pi}{2}$, with
\begin{gather}\label{eq18}
\begin{split}
a_n  & = \frac{1}{{2^{\frac{n}{2}} \Gamma \left( {\frac{n}{2} + 1} \right)}}\left[ {\frac{{d^n }}{{dt^n }}\left( {\frac{1}{2}\frac{{t^2 }}{{e^t  - t - 1}}} \right)^{\frac{{n + 1}}{2}} } \right]_{t = 0}\\ & = \frac{{e^{ - \frac{3}{4}n\pi i} }}{{2\pi i}}\int_0^{ + \infty } {t^{\frac{n}{2} - 1} e^{ - 2\pi t} \Gamma^\ast \left( {it} \right)dt}  - \frac{{e^{\frac{3}{4}n\pi i} }}{{2\pi i}}\int_0^{ + \infty } {t^{\frac{n}{2} - 1} e^{ - 2\pi t} \Gamma^\ast \left( { - it} \right)dt} ,
\end{split}
\end{gather}
where the second representation is true only for $n\geq 2$. The remainder term $R_N \left( z \right)$ is given by
\begin{equation}\label{eq17}
R_N \left( z \right) = \frac{{e^{ - \frac{3}{4}N\pi i} }}{{2\pi iz^{\frac{N}{2}} }}\int_0^{ + \infty } {\frac{{t^{\frac{N}{2} - 1} e^{ - 2\pi t} }}{{1 - e^{ - \frac{{3\pi i}}{4}} \left( {t/z} \right)^{\frac{1}{2}} }}\Gamma^\ast\left( {it} \right)dt}  - \frac{{e^{\frac{3}{4}N\pi i} }}{{2\pi iz^{\frac{N}{2}} }}\int_0^{ + \infty } {\frac{{t^{\frac{N}{2} - 1} e^{ - 2\pi t} }}{{1 - e^{\frac{{3\pi i}}{4}} \left( {t/z} \right)^{\frac{1}{2}} }}\Gamma^\ast\left( { - it} \right)dt} .
\end{equation}
The square roots are defined to be positive on the positive real line and are defined by analytic continuation elsewhere.
\end{theorem}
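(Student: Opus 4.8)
The plan is to repeat, in the coalescing endpoint--saddle regime, the analysis behind Theorem~\ref{thm1}. Setting $a=z$ in \eqref{eq0} gives
\[
\Gamma\left(z,z\right)=z^{z}e^{-z}\int_{0}^{+\infty}e^{-zF\left(t\right)}\,dt,\qquad F\left(t\right)=e^{t}-t-1,\qquad \Re\left(z\right)>0.
\]
Here $F$ is entire with $F\left(t\right)\sim\tfrac{1}{2}t^{2}$ near $t=0$ and with saddles at $t_{k}=2\pi ik$, where $F\left(t_{k}\right)=-2\pi ik$; thus $t_{0}=0$ sits at the endpoint of the path of integration, the ray $[0,+\infty)$ is a steepest descent contour issuing from it, and the two adjacent saddles $t_{\pm1}=\pm2\pi i$ have $\Re F\left(t_{\pm1}\right)=0=\Re F\left(t_{0}\right)$. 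These two features together force the half-integer powers of $z$ in \eqref{eq44} and the weight $e^{-2\pi t}$ in \eqref{eq18}--\eqref{eq17} (the governing exponential scale is $\left|F\left(t_{\pm1}\right)-F\left(t_{0}\right)\right|=2\pi$). I would also record the elementary identity
\[
\Gamma^\ast\left(\zeta\right)=\frac{\sqrt{\zeta}}{\sqrt{2\pi}}\int_{-\infty}^{+\infty}e^{-\zeta F\left(\theta\right)}\,d\theta,\qquad \Re\left(\zeta\right)>0,
\]
obtained from $\Gamma\left(\zeta\right)=\zeta^{\zeta}\int_{0}^{+\infty}v^{\zeta-1}e^{-\zeta v}\,dv$ by the substitution $v=e^{\theta}$; this is the bridge through which $\Gamma^\ast$ will reappear in the remainder.

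Next I would substitute $w=\sqrt{2F\left(t\right)}$, an analytic bijection of $[0,+\infty)$ onto $[0,+\infty)$ with $w'\left(0\right)=1$, turning the integral into $\int_{0}^{+\infty}e^{-zw^{2}/2}\left(dt/dw\right)dw$ with $dt/dw=w/\left(e^{t\left(w\right)}-1\right)$, which is holomorphic at $w=0$. Differentiating $w^{2}/2=F\left(t\right)$ gives $w\left(dw/dt\right)=e^{t}-1$, hence $\left(dt/dw\right)dw=d\theta$ whenever $t=2\pi ik+\theta$; this little identity is the mechanism that makes the cut integrals collapse. The singularities of $dt/dw$ in the $w$-plane occur where $dw/dt=0$ with $t\neq0$, i.e.\ at the images $w=\pm\sqrt{2F\left(t_{k}\right)}$ of the remaining saddles, the nearest four of which lie at $\left|w\right|=2\sqrt{\pi}$, at arguments $\pm\pi/4$ and $\pm3\pi/4$, and come from $t_{\pm1}$. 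I would then apply Watson's lemma with a remainder: write $dt/dw=\sum_{n=0}^{N-1}\beta_{n}w^{n}+w^{N}\rho_{N}\left(w\right)$, with $\beta_{n}$ and $\rho_{N}$ given by Cauchy's formula and the contour pushed out so as to wrap the two branch cuts issuing from the singularities attached to $t_{\pm1}$; feeding the truncated part back in and using $\int_{0}^{+\infty}e^{-zw^{2}/2}w^{n}\,dw=2^{\left(n-1\right)/2}\Gamma\left(\tfrac{n+1}{2}\right)z^{-\left(n+1\right)/2}$ reproduces the truncated series in the statement. Matching against the known expansion \eqref{eq44}---or, directly, the Lagrange--B\"urmann inversion formula together with the Legendre duplication formula---identifies $\beta_{n}$ with $a_{n}$ and gives the first representation in \eqref{eq18}.

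The substantive step is the evaluation of the two cut integrals. On the cut attached to $t_{n}$ ($n=\pm1$), the shift identity $F\left(2\pi in+\theta\right)=F\left(\theta\right)-2\pi in$ lets one write $w^{2}/2=F\left(t\right)$ as $w^{2}/2=F\left(\theta\right)-2\pi in$, and, by $\left(dt/dw\right)dw=d\theta$, the branch-cut discontinuity of $dt/dw$ integrated along the cut collapses to an integral of the bare differential $d\theta$ over a contour that comes in from infinity, passes through $\theta=0$, and returns to infinity. Deforming this contour appropriately, inserting a Laplace parameter through $A^{-(k+1)/2}=\Gamma\left(\tfrac{k+1}{2}\right)^{-1}\int_{0}^{+\infty}s^{(k-1)/2}e^{-As}\,ds$, and recognising $\int_{-\infty}^{+\infty}e^{-sF\left(\theta\right)}\,d\theta$ via the $\Gamma^\ast$ identity reconstitutes a one-dimensional integral of $\Gamma^\ast$; a final rotation of that integration variable onto the imaginary axis produces $\Gamma^\ast\left(\pm it\right)$, the quantity $F\left(t_{\pm1}\right)-F\left(t_{0}\right)=\mp2\pi i$ becomes the weight $e^{-2\pi t}$, and the Cauchy kernel $\left(w'-w\right)^{-1}$ turns into the Howls-type denominator $1-e^{\mp3\pi i/4}\left(t/z\right)^{1/2}$; the two adjacent-saddle contributions enter with opposite signs. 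This yields \eqref{eq17}, and the same localisation without the factor $w^{N}$ gives the second representation in \eqref{eq18}. The restriction $N\geq2$ (respectively $n\geq2$) is precisely the condition for these integrals to converge at $t=0$, where $\Gamma^\ast\left(\pm it\right)=\mathcal{O}\left(t^{-1/2}\right)$. Finally, since both sides agree for $\Re\left(z\right)>0$ and the denominators $1-e^{\mp3\pi i/4}\left(t/z\right)^{1/2}$ remain bounded away from zero while $\left|\arg z\right|<\tfrac{3\pi}{2}$, analytic continuation extends the identity to that whole sector; the resulting formula closely parallels Boyd's \eqref{eq46} for $\Gamma^\ast$, which serves as a consistency check.

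I expect the main obstacle to be this third step: pinning down the exact branch choices and phase factors---the $e^{\mp3\pi i/4}$, the precise exponent in $e^{-2\pi t}$, and the relative sign between the two adjacent-saddle contributions---in the passage from the branch-cut discontinuity of $dt/dw$ to the $\Gamma^\ast$ integrals, together with the rigorous justification of the attendant contour deformations (decay at infinity, legitimacy of collapsing the Cauchy loop onto the two cuts, admissibility of rotating the real $\theta$-axis, and the interchanges of the orders of integration). Getting the resurgence bookkeeping exactly right, rather than merely up to harmless constants, is where the real work lies.
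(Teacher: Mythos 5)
Your plan is sound and rests on the same Berry--Howls resurgence mechanism as the paper, but the implementation is genuinely different. The paper never introduces the conformal variable $w=\sqrt{2f(t)}$: following Howls, it writes $1/f'(t(\tau))$ as a loop integral in the original $t$-plane with kernel $f^{-1/2}(u)\big/\big(1-\tau^{1/2}f^{-1/2}(u)\big)$, expands by the finite geometric identity \eqref{eq12}, and deforms the loop onto the adjacent steepest-descent contours $\mathscr{C}^{(1)}\left(-\frac{\pi}{2}\right)$ and $\mathscr{C}^{(-1)}\left(\frac{\pi}{2}\right)$; the changes of variable $s=\pm itf(u)$, with the explicit branch choices $i^{1/2}=e^{-3\pi i/4}$, $(-i)^{1/2}=e^{3\pi i/4}$, then produce the weight $e^{-2\pi t}$, the factors $\Gamma^\ast(\pm it)$ (the adjacent-contour integrals being steepest-descent representations of the scaled gamma function, shifted by $u\mapsto u\pm 2\pi i$), and the denominators $1-e^{\mp 3\pi i/4}(t/z)^{1/2}$ of \eqref{eq17}. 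Your route performs the same localisation in the $w$-plane (Cauchy remainder for $dt/dw$, cuts wrapped around the images of $t^{(\pm1)}$), which is equivalent but obliges you to control the sheet structure of the inverse map --- exactly the bookkeeping Howls's formulation is designed to avoid; and indeed your one factual slip is there: on the branch of $t(w)$ reached from the positive real $w$-axis, which maps the domain $\Delta$ onto the sector $\left|\arg w\right|<\frac{3\pi}{4}$, only the two branch points $2\sqrt{\pi}e^{\pm 3\pi i/4}$ are singular, while the points at arguments $\pm\frac{\pi}{4}$ lie inside that sector and are regular (they live on further sheets), so ``the nearest four'' should read ``the nearest two''; fortunately your subsequent two-cut plan is the correct one. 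Your other deviations are legitimate and even attractive: the Lagrange--B\"urmann-plus-duplication identification of the coefficients reproduces the first formula in \eqref{eq18} exactly (the paper instead shrinks the loop and computes residues), your closing analytic-continuation argument (both sides analytic for $\left|\arg z\right|<\frac{3\pi}{2}$, the denominators vanishing only on $\arg z=\mp\frac{3\pi}{2}$) is a clean substitute for the paper's continuous rotation of the contour $\Gamma(\theta)$ up to the adjacency angles $\theta=\pm\frac{3\pi}{2}$, and your explanation of the restriction $N\geq 2$ (integrability at $t=0$, where $\Gamma^\ast(\pm it)=\mathcal{O}(t^{-1/2})$) is consistent with how the paper handles the limiting process in \eqref{eq13}. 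What you defer --- the phases $e^{\mp 3\pi i/4}$, the relative minus sign between the two saddle contributions, the rotation that turns the Laplace parameter into $\Gamma^\ast(\pm it)$, and the interchanges of integration --- is precisely the content of the paper's steps \eqref{eq13}--\eqref{eq16}, so the outline closes once that bookkeeping is carried out as there.
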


Again, this resurgence formula can be extended to other sectors of the complex plane like the one in Theorem \ref{thm1}. (One has to replace $a$ by $z$ in the
continuation formulas given above.) In this case, the Stokes lines are given by $\arg z = \left( {2m + \frac{3}{2}} \right)\pi$ for any integer $m$.

In Section \ref{section3}, we will show how to obtain numerically computable bounds for the remainder terms $R_N \left( {a,\lambda } \right)$ and $R_N \left( z \right)$ using their explicit forms given in Theorems \ref{thm1} and \ref{thm2}. To our knowledge no simple, explicit error bounds for the asymptotic series \eqref{eq43} and \eqref{eq44} exist in the literature. Some other formulas for the coefficients $b_n \left( \lambda  \right)$ and $a_n$ can be found in Appendix \ref{appendixa}.

In the following theorems, we give exponentially improved versions of the asymptotic expansions \eqref{eq43} and \eqref{eq44}. These expansions can be viewed as the mathematically rigorous forms of the terminated expansions of Dingle \cite[pp. 463--464]{Dingle}. In these theorems, we truncate the asymptotic series \eqref{eq43} and \eqref{eq44} at about their least terms and re-expand the remainders into new asymptotic expansions. The resulting exponentially improved asymptotic series are valid in larger regions than the original expansions. The terms in these new series involve the terminant function $\widehat T_p\left(w\right)$, which allows the smooth transition through the Stokes lines. For the definition and basic properties of the terminant function, see Section \ref{section5}. Throughout this paper, we use subscripts in the $\mathcal{O}$ notations to indicate the dependence of the implied constant on certain parameters. In Theorem \ref{thm3}, $R_N \left( {a,\lambda } \right)$ is extended to a wider region using analytic continuation.

\begin{theorem}\label{thm3} Let $K$ be an arbitrary fixed non-negative integer, and let $\lambda>1$ be a fixed real number. Suppose that $\left|\arg a\right| \leq 2\pi -\delta <2\pi$ with an arbitrary fixed small positive $\delta$, $\left|a\right|$ is large and $N = \left| a \right| \left( {\lambda  - \log \lambda  - 1} \right)+ \rho$ with $\rho$ being bounded. Then
\begin{equation}\label{eq85}
R_N \left( {a,\lambda } \right) = e^{a\left( {\lambda  - \log \lambda  - 1} \right)} \sqrt {\frac{{2\pi }}{a}} \sum\limits_{k = 0}^{K - 1} {\frac{{a_{2k} }}{{a^k }}\widehat T_{N - k + \frac{1}{2}} \left( {a\left( {\lambda  - \log \lambda  - 1} \right)} \right)}  + R_{N,K} \left( {a,\lambda } \right),
\end{equation}
where
\[
R_{N,K} \left( {a,\lambda } \right) = \mathcal{O}_{K,\rho } \left( {\frac{{e^{ - \left| a \right|\left( {\lambda  - \log \lambda  - 1} \right)} }}{{\left| a \right|^{K + \frac{1}{2}} }}} \right)
\]
for $\left|\arg a\right|\leq \pi$;
\[
R_{N,K} \left( {a,\lambda } \right) = \mathcal{O}_{K,\rho,\delta} \left( {\frac{{e^{\Re \left( a \right)\left( {\lambda  - \log \lambda  - 1} \right)} }}{{\left| a \right|^{K + \frac{1}{2}} }}} \right)
\]
for $\pi \le \left|\arg a\right| < 2\pi  - \delta$.
\end{theorem}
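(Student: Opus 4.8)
\emph{Strategy.} The plan is to start from the explicit integral representation \eqref{eq9}, simplify it using $z=\lambda a$, substitute the resurgence formula \eqref{eq45} for $1/\Gamma^\ast$, evaluate in closed form the elementary integrals that arise, recognising them as the terms of \eqref{eq85}, and then estimate the single integral that remains, which involves $\widetilde M_K$. Since $z=\lambda a$ gives $z-a=(\lambda-1)a$, the algebraic prefactor in \eqref{eq9} collapses to $(-1)^N a^{-N-1}$, whence, writing $\sigma=\lambda-\log\lambda-1>0$,
\[
R_N\left(a,\lambda\right)=\frac{\left(-1\right)^N}{\sqrt{2\pi}\,a^{N+1}}\int_0^{+\infty}\frac{t^{N-1/2}e^{-\sigma t}}{1+t/a}\,\frac{dt}{\Gamma^\ast\left(t\right)}\qquad\left(\left|\arg a\right|<\pi\right),
\]
and for $\left|\arg a\right|\geq\pi$ one works with the analytic continuation of this expression.

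\emph{The sector $\left|\arg a\right|\leq\pi$.} Substituting $1/\Gamma^\ast(t)=\sum_{n=0}^{K-1}(-1)^n a_{2n}t^{-n}+\widetilde M_K(t)$ and using the evaluation
\[
\int_0^{+\infty}\frac{t^{p-1}e^{-\sigma t}}{1+t/a}\,dt=a^{p}\,\Gamma\left(p\right)e^{\sigma a}\,\Gamma\left(1-p,\sigma a\right),\qquad p=N-n+\tfrac12,
\]
(valid for $\Re\left(a\right)>0$ by writing $(1+t/a)^{-1}=\int_0^{+\infty}e^{-(1+t/a)s}\,ds$ and interchanging integrations, then extended to $\left|\arg a\right|<\pi$ by analyticity), one checks from the definition of $\widehat T_p$ in Section \ref{section5} and its relation to the incomplete gamma function that the $n$-th term equals $e^{\sigma a}\sqrt{2\pi/a}\,a_{2n}a^{-n}\widehat T_{N-n+1/2}(\sigma a)$; this reproduces the finite sum in \eqref{eq85} and leaves
\[
R_{N,K}\left(a,\lambda\right)=\frac{\left(-1\right)^N}{\sqrt{2\pi}\,a^{N+1}}\int_0^{+\infty}\frac{t^{N-1/2}e^{-\sigma t}}{1+t/a}\,\widetilde M_K\left(t\right)dt.
\]
Here $\widetilde M_K(t)=\mathcal O(t^{-K})$ as $t\to+\infty$ and $\widetilde M_K(t)=\mathcal O(t^{1-K})$ as $t\to 0+$, both harmless because $N$ is large; the trouble is the factor $(1+t/a)^{-1}$, which for $\arg a=\pm\pi$ has a non-integrable pole at $t=\left|a\right|$, exactly where $t^{N-1/2}e^{-\sigma t}$ peaks since $N=\left|a\right|\sigma+\rho$. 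To keep the estimate uniform on the closed sector I would not bound this integral crudely; instead I would insert the integral representation \eqref{eq47} for $\widetilde M_K$, interchange the order of integration and carry out the $t$-integration first, turning $R_{N,K}$ into a fixed combination of terminant-type integrals in the auxiliary variable, which are controlled by the uniform bounds for $\widehat T_p$ of Section \ref{section5}. Stirling's formula applied to $\Gamma(N-K+\tfrac12)$ with $N=\left|a\right|\sigma+\rho$ then yields $R_{N,K}(a,\lambda)=\mathcal O_{K,\rho}\big(e^{-\left|a\right|\sigma}\left|a\right|^{-K-1/2}\big)$, uniformly for $\left|\arg a\right|\leq\pi$.

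\emph{The sectors $\pi\leq\left|\arg a\right|<2\pi-\delta$.} Now $R_N(a,\lambda)$ is the analytic continuation of the integral above. Using \eqref{eq22} together with the connection formula \eqref{eq66} one obtains, with $a_0=ae^{\mp 2\pi i}$ (so that $\left|\arg a_0\right|\leq\pi$),
\[
R_N\left(a,\lambda\right)=R_N\left(a_0,\lambda\right)+\big(e^{\mp 2\pi i a_0}-1\big)\sqrt{2\pi/a_0}\,\Gamma^\ast\left(a_0\right)e^{\sigma a_0};
\]
the extra term is precisely the residue picked up as the pole $t=-a$ crosses the contour when $\arg a$ passes $\pm\pi$, re-expressed through \eqref{eq51}. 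Applying the previous case to $R_N(a_0,\lambda)$, expanding $\Gamma^\ast(a_0)=\sum_{k=0}^{K-1}a_{2k}a_0^{-k}+M_K(a_0)$ with $M_K(a_0)=\mathcal O(\left|a\right|^{-K})$, and combining the terminants $\widehat T_{N-k+1/2}(\sigma a_0)$ with this finite sum by means of the connection formula for $\widehat T_p$ across its Stokes line (Section \ref{section5}), the terms rearrange into $e^{\sigma a}\sqrt{2\pi/a}\sum_{k=0}^{K-1}a_{2k}a^{-k}\widehat T_{N-k+1/2}(\sigma a)$ as in \eqref{eq85}, while everything left over, namely $R_{N,K}(a_0,\lambda)$, the $M_K(a_0)$ contribution, the higher-order remainder of the terminant connection formula, and exponentially small pieces coming from $e^{\mp 2\pi i a_0}$, is $\mathcal O_{K,\rho,\delta}\big(e^{\Re(a)\sigma}\left|a\right|^{-K-1/2}\big)$. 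The exponential $e^{\Re(a)\sigma}=\big|e^{\sigma a_0}\big|$ arises entirely from the continuation term, and the dependence on $\delta$ enters when keeping $\arg a_0$, and hence the arguments of $\Gamma^\ast(a_0)$ and of the terminants, away from the rays that become singular as $\arg a\to\pm 2\pi$.

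\emph{Main obstacle.} The crux is the uniformity in the first sector up to the Stokes lines $\arg a=\pm\pi$: a crude bound of the $\widetilde M_K$-integral diverges there because the pole $t=-a$ collides with the saddle $t\approx N/\sigma$, so one is forced to exploit the structure of $\widetilde M_K$ via \eqref{eq47} and the uniform terminant estimates rather than proceed by brute force. A secondary but delicate point is the Riemann-surface bookkeeping in the second case, where one must verify that the explicit continuation term cancels the monodromy jump of the terminant sum exactly, leaving a remainder of the asserted order.
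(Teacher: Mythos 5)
Your reduction of \eqref{eq9} (with $z-a=(\lambda-1)a$), the substitution of the second formula in \eqref{eq45}, and the closed-form evaluation identifying the $n$-th term with $e^{a(\lambda-\log\lambda-1)}\sqrt{2\pi/a}\,a_{2n}a^{-n}\widehat T_{N-n+\frac12}\left(a(\lambda-\log\lambda-1)\right)$ are exactly the paper's first steps, and your resulting formula $R_{N,K}(a,\lambda)=\frac{(-1)^N}{\sqrt{2\pi}\,a^{N+1}}\int_0^{+\infty}\frac{t^{N-1/2}e^{-t(\lambda-\log\lambda-1)}}{1+t/a}\widetilde M_K(t)\,dt$ agrees with \eqref{eq71}. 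The gap is precisely at the step you call the crux. Inserting \eqref{eq47}, interchanging, and carrying out the $t$-integration first does not put you in reach of "the uniform bounds for $\widehat T_p$ of Section \ref{section5}": by partial fractions in $t$ that route produces, besides $\widehat T_{N-K+\frac12}\left(a(\lambda-\log\lambda-1)\right)$, terminants of the form $\widehat T_{N-K+\frac12}\left(\mp i(\lambda-\log\lambda-1)s\right)$ in which the order $N-K+\frac12\approx|a|(\lambda-\log\lambda-1)$ bears no fixed relation to the modulus of the argument as $s$ runs over $(0,+\infty)$; the estimates \eqref{eq67}--\eqref{eq69} hold only in the regime $p\sim|w|$ and so do not apply to these, and the partial-fraction weights $a/(a\mp is)$ are moreover unbounded (when $\arg a$ is near $\pm\frac{\pi}{2}$ and $s$ near $|a|$). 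As written, the decisive uniform estimate on the closed sector $\left|\arg a\right|\le\pi$ is therefore not established.

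What the paper does instead is to keep the two integrations coupled only through factors of modulus at most one: writing $a=re^{i\theta}$ and $t=r\tau$, it splits the inner $s$-integral of \eqref{eq47} by the identity $\frac{1}{1+is/(r\tau)}=\frac{1}{1+is/r}+\left(\tau-1\right)\frac{1}{\left(1-ir\tau/s\right)\left(1+is/r\right)}$ (and its conjugate), so that the $\tau$-integral of the first piece is exactly the terminant $\widehat T_{N-K+\frac12}\left(a(\lambda-\log\lambda-1)\right)$ — the only terminant that ever appears, and one with $p\sim|w|$ so that \eqref{eq67} applies — while the second piece gives a plain $\Gamma\left(N-K+\tfrac12\right)$; the $s$-integrals are handled by Boyd's bound $\frac{1}{2\pi}\int_0^{+\infty}s^{K-1}e^{-2\pi s}\left|\Gamma^\ast(\pm is)\right|ds\le\frac12\frac{(1+\zeta(K))\Gamma(K)}{(2\pi)^{K+1}}$. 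This yields the explicit bound of Theorem \ref{thm5} on the closed sector including $\arg a=\pm\pi$, and the stated order estimate then follows from Stirling's formula and \eqref{eq67}. For $\pi\le\left|\arg a\right|<2\pi-\delta$ the paper is also simpler than your plan: it continues \eqref{eq71} directly, the pole $t=-a$ crossing the contour giving $R_{N,K}(a,\lambda)=\sqrt{2\pi/a}\,e^{a(\lambda-\log\lambda-1)}\widetilde M_K\left(ae^{-\pi i}\right)+R_{N,K}\left(ae^{-2\pi i},\lambda\right)$, with $\widetilde M_K\left(ae^{-\pi i}\right)=\mathcal O_{K,\delta}\left(|a|^{-K}\right)$; no re-expansion of $\Gamma^\ast(a_0)$, no terminant connection formula, and none of the cancellation bookkeeping you defer is needed (your continuation identity itself is correct and matches the paper's jump term). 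Finally, the theorem admits $K=0,1$, for which your argument (like the main one) is unavailable since it needs $K\ge2$; the paper covers these by writing $R_{N,0}$ and $R_{N,1}$ as $R_{N,2}$ plus one or two explicit terminant terms, a case you omit.
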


\begin{theorem}\label{thm4} Define $R_{N,M} \left( z \right)$ by
\begin{equation}\label{eq86}
\Gamma \left( {z,z} \right) = \sqrt {\frac{\pi }{2}} z^{z - \frac{1}{2}} e^{ - z} \left( {\sum\limits_{n = 0}^{N - 1} {\frac{{a_{2n} }}{{z^n }}}  + \sum\limits_{m = 0}^{M - 1} {\frac{{a_{2m + 1} }}{{z^{m + \frac{1}{2}} }}}  + R_{N,M} \left( z \right)} \right).
\end{equation}
Suppose that $\left|\arg z\right| \leq 3\pi -\delta <3\pi$ with an arbitrary fixed small positive $\delta$, $\left|z\right|$ is large and $N=2\pi\left|z\right|+\rho$, $M=2\pi\left|z\right|+\sigma$ with $\rho$ and $\sigma$ being bounded. Then
\begin{gather}\label{eq95}
\begin{split}
R_{N,M} \left( z \right) =\; & e^{2\pi iz} \sum\limits_{k = 0}^{K - 1} {\frac{{a_{2k} }}{{z^k }}\widehat T_{N - k} \left( {2\pi iz} \right)}  - e^{ - 2\pi iz} \sum\limits_{k = 0}^{K - 1} {\frac{{a_{2k} }}{{z^k }}\widehat T_{N - k} \left( { - 2\pi iz} \right)} 
\\ & - e^{2\pi iz} \sum\limits_{\ell  = 0}^{L - 1} {\frac{{a_{2\ell } }}{{z^\ell  }}\widehat T_{M - \ell  + \frac{1}{2}} \left( {2\pi iz} \right)}  -e^{ - 2\pi iz} \sum\limits_{\ell  = 0}^{L - 1} {\frac{{a_{2\ell } }}{{z^\ell  }} \widehat T_{M - \ell  + \frac{1}{2}} \left( { - 2\pi iz} \right)} 
\\ & + R_{N,M,K,L} \left( z \right),
\end{split}
\end{gather}
where $K$ and $L$ are arbitrary fixed non-negative integers, and
\begin{equation}\label{eq83}
R_{N,M,K,L} \left( z \right) = \mathcal{O}_{K,\rho } \left( {\frac{{e^{ - 2\pi \left| z \right|} }}{{\left| z \right|^K }}} \right) + \mathcal{O}_{L,\sigma } \left( {\frac{{e^{ - 2\pi \left| z \right|} }}{{\left| z \right|^L }}} \right)
\end{equation}
for $\left|\arg z\right|\leq \frac{\pi}{2}$;
\begin{equation}\label{eq84}
R_{N,M,K,L} \left( z \right) = \mathcal{O}_{K,\rho } \left( {\frac{{e^{ \mp 2\pi \Im \left( z \right)} }}{{\left|z\right|^K }}} \right) + \mathcal{O}_{L,\sigma } \left( {\frac{{e^{ \mp 2\pi \Im \left( z \right)} }}{{\left|z\right|^L }}} \right)
\end{equation}
for $\frac{\pi}{2}\leq \pm \arg z \leq \frac{3\pi}{2}$;
\[
R_{N,M,K,L} \left( z \right) = \mathcal{O}_{K,\rho,\delta} \left( {\frac{{\left| {\sin \left( {2\pi z} \right)} \right|}}{{\left| z \right|^K }}} \right) + \mathcal{O}_{L,\sigma,\delta} \left( {\frac{{\left| {\cos \left( {2\pi z} \right)} \right|}}{{\left| z \right|^L }}} \right)
\]
for $\frac{3\pi}{2}\leq \left|\arg z\right| \leq 3\pi-\delta$. Moreover, if $K=L$, then the bound \eqref{eq83} remains valid in the larger sector $\left|\arg z\right| \leq \frac{3\pi}{2}$, and the estimate \eqref{eq84} holds in the sectors $\frac{3\pi}{2}\leq \mp \arg z \leq 3\pi-\delta$ with an implied constant that also depends on $\delta$.
\end{theorem}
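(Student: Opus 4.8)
The plan is to imitate the proof of Theorem~\ref{thm3}: write $R_{N,M}\left(z\right)$ as an explicit integral, re-expand the scaled gamma function occurring in it by Boyd's resurgence formulas \eqref{eq45}--\eqref{eq47}, recognise the resulting finite sums as terminant functions, and estimate what is left. The first task is to pass from Theorem~\ref{thm2}, which controls the remainder of the \emph{full} series, to $R_{N,M}\left(z\right)$. Using the partial-fraction identity
\[
\frac{1}{1 - e^{ \mp \frac{3\pi i}{4}} \left( t/z \right)^{\frac12}} = \frac{1 + e^{ \mp \frac{3\pi i}{4}} \left( t/z \right)^{\frac12}}{1 \mp it/z} ,
\]
the remainder in \eqref{eq17} splits into a part carrying integer powers of $1/z$ and a part carrying half-integer powers. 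Expanding the denominators $\left(1 \mp it/z\right)^{-1}$ in geometric series and comparing term by term with the integral representation \eqref{eq18} specialised to even and to odd indices, one sees that moving the truncation index of the integer part to $N$ and that of the half-integer part to $M$ only inserts or deletes finitely many terms $a_{2m+1}z^{-m-\frac12}$ --- exactly the ones distinguishing $R_{N,M}\left(z\right)$ from the full-series remainder of Theorem~\ref{thm2}. This presents $R_{N,M}\left(z\right)$ as a linear combination of the four integrals
\[
\frac{\left( \pm i \right)^N}{2\pi i\, z^N}\int_0^{+\infty} \frac{t^{N-1} e^{-2\pi t}}{1 \mp it/z}\, \Gamma^\ast\!\left(\pm it\right) dt , \qquad \frac{e^{ \mp \frac{3\pi i}{4}} \left( \pm i \right)^M}{2\pi i\, z^{M+\frac12}}\int_0^{+\infty} \frac{t^{M - \frac12} e^{-2\pi t}}{1 \mp it/z}\, \Gamma^\ast\!\left(\pm it\right) dt .
\]

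Next I would substitute into the first pair $\Gamma^\ast\!\left(\pm it\right) = \sum_{k=0}^{K-1} a_{2k}\left(\pm it\right)^{-k} + M_K\!\left(\pm it\right)$ and into the second pair $\Gamma^\ast\!\left(\pm it\right) = \sum_{\ell=0}^{L-1} a_{2\ell}\left(\pm it\right)^{-\ell} + M_L\!\left(\pm it\right)$, both from \eqref{eq45}. Every finite-sum term then leaves an integral $\int_0^{+\infty} t^{q-1} e^{-2\pi t}\left(1 \mp it/z\right)^{-1} dt$; after the scaling $t \mapsto t/\left(2\pi\right)$ this is precisely the integral representation of a terminant function $\widehat T_q$ with argument $\pm 2\pi i z$ recorded in Section~\ref{section5}. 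Collecting these with $q = N-k$ (integer part) and $q = M-\ell+\frac12$ (half-integer part) and simplifying the elementary prefactors reproduces the four terminant sums on the right-hand side of \eqref{eq95}, and what remains, $R_{N,M,K,L}\left(z\right)$, is a linear combination of the four integrals
\[
\frac{\left( \pm i \right)^N}{2\pi i\, z^N}\int_0^{+\infty} \frac{t^{N-1} e^{-2\pi t}}{1 \mp it/z}\, M_K\!\left(\pm it\right) dt , \qquad \frac{e^{ \mp \frac{3\pi i}{4}} \left( \pm i \right)^M}{2\pi i\, z^{M+\frac12}}\int_0^{+\infty} \frac{t^{M - \frac12} e^{-2\pi t}}{1 \mp it/z}\, M_L\!\left(\pm it\right) dt .
\]

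Now the sector dependence enters through the estimation of these four integrals. On the contour $\arg\left(\pm it\right) = \pm\frac{\pi}{2}$, so the bounds $M_K\left(w\right) = \mathcal O\left(\left|w\right|^{-K}\right)$ and $M_L\left(w\right) = \mathcal O\left(\left|w\right|^{-L}\right)$ quoted after \eqref{eq47} are available. As long as $\arg z$ stays bounded away from odd multiples of $\frac{\pi}{2}$ the factor $\left(1 \mp it/z\right)^{-1}$ is bounded for $t \in \left(0,+\infty\right)$; then $\int_0^{+\infty} t^{N-1-K} e^{-2\pi t} dt = \Gamma\left(N-K\right)\left(2\pi\right)^{-\left(N-K\right)}$, which by Stirling's formula and $N = 2\pi\left|z\right| + \rho$ combines with the factor $\left|z\right|^{-N}$ to give $\mathcal O_{K,\rho}\!\left(e^{-2\pi\left|z\right|}\left|z\right|^{-K}\right)$, and similarly for the $M_L$-integrals; this yields \eqref{eq83}. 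When $\arg z$ reaches and crosses an odd multiple of $\frac{\pi}{2}$, the pole of $\left(1 \mp it/z\right)^{-1}$ at $t = \mp iz$ passes the contour, so one must first rotate the path of integration, and the residue picked up in the rotation is an exponential $\propto e^{\mp 2\pi\Im\left(z\right)}$ --- and, once both poles have been crossed, $\propto \left|\sin\left(2\pi z\right)\right|$ or $\left|\cos\left(2\pi z\right)\right|$ --- which is exactly the origin of the factors appearing in \eqref{eq84} and in the last displayed bound of the theorem.

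Finally, when $K = L$ the integer- and half-integer-order terminant contributions match, so the partial-fraction step can be run backwards: the four terminant sums recombine into the $\Gamma^\ast$-type sums of Theorem~\ref{thm2} and the four error integrals recombine into ones whose denominator is again $1 - e^{\mp\frac{3\pi i}{4}}\left(t/z\right)^{\frac12}$; the single zero of this denominator sits at one odd multiple of $\frac{\pi}{2}$ on the correct branch of the square root and is therefore avoided on a strictly wider $\arg z$-range than either of $1 \mp it/z$ is, which is why \eqref{eq83} then survives up to $\left|\arg z\right| \le \frac{3\pi}{2}$ and \eqref{eq84} up to $3\pi - \delta$ in the one-sided sectors. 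I expect the main obstacle to be precisely this book-keeping of contour rotations and residues as $\arg z$ sweeps through the successive Stokes lines $\arg z = \frac{\pi}{2},\pi,\frac{3\pi}{2},\dots$: one has to verify that the pole crossings reproduce \emph{exactly} the exponential and trigonometric factors in \eqref{eq84}--\eqref{eq95} with nothing double-counted against the terminant sums already extracted, and, for the $K=L$ refinement, to exhibit cleanly the cancellation between the even and odd Stokes discontinuities that permits the analytic continuation into the wider sectors.
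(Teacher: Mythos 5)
Your derivation of \eqref{eq95} is essentially the paper's: the same partial-fraction identity turns \eqref{eq17} into four integrals with denominators $1 \mp it/z$ (this is \eqref{eq75}--\eqref{eq76} in the paper, obtained from the case $N=2$ of Theorem \ref{thm2}), the finite expansion \eqref{eq12} played against the integral formulas coming from \eqref{eq18} lets the integer and half-integer parts be truncated at $N$ and $M$ separately, and inserting \eqref{eq45} and the integral definition of $\widehat T_p$ produces the four terminant sums with the error term \eqref{eq77}. The genuine gap is in the estimation of that error term. Bounding $\left(1 \mp it/z\right)^{-1}$ by a constant works only with a constant that blows up as $\arg z$ approaches $\pm\frac{\pi}{2}$, and at $\arg z = \pm\frac{\pi}{2}$ one of the poles $t = \mp iz$ sits \emph{on} the integration path; yet \eqref{eq83} is asserted uniformly on the closed sector $\left|\arg z\right| \le \frac{\pi}{2}$, and the later bounds are likewise claimed up to their boundary rays. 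So your plan, as written, does not deliver the stated uniformity precisely where the theorem is most delicate. The paper avoids the crude bound: it substitutes the resurgence representation of $M_K\left(\pm ir\tau\right)$ from \cite{Nemes}, splits off $\left(\tau-1\right)$-terms so that the singular factor $1/\left(1 - i\tau e^{-i\theta}\right)$ survives only inside an integral that \emph{is} $e^{\pm 2\pi iz}\widehat T_{N-K}\left(\pm 2\pi iz\right)$, bounds everything else by $\zeta\left(K\right)\Gamma\left(K\right)$-type constants (this is what yields Theorem \ref{thm6}), and only then applies Olver's uniform estimate \eqref{eq67}, Stirling's formula and continuity to reach \eqref{eq83} on the closed sector. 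Some such device, keeping the near-singular factor inside a terminant function rather than estimating it pointwise, is needed to make your third step work.

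Your explanation of the $K=L$ refinement is also not correct as stated. Recombining the integer and half-integer error integrals back into ones with denominator $1 - e^{\mp\frac{3\pi i}{4}}\left(t/z\right)^{\frac{1}{2}}$ would require the two numerators to differ by exactly the factor $e^{\mp\frac{3\pi i}{4}}\left(t/z\right)^{\frac{1}{2}}$, i.e. it forces $M = N$; the theorem only assumes $N = 2\pi\left|z\right| + \rho$ and $M = 2\pi\left|z\right| + \sigma$ with possibly different bounded $\rho,\sigma$. The paper's mechanism is different and simpler: each crossing of $\arg z = \frac{\pi}{2}, \frac{3\pi}{2},\dots$ adds residue terms of the form $e^{\pm 2\pi iz}M_K$ and $e^{\pm 2\pi iz}M_L$ (see \eqref{eq79}, \eqref{eq87}, \eqref{eq88}); when $K = L$ these cancel identically in pairs, so no exponentially large discontinuity enters and the previous order estimate persists into the wider sector, whereas for $K \ne L$ they are rewritten via \eqref{eq51} as $\mathcal{O}\left(\left|z\right|^{-K}\right)$ and $\mathcal{O}\left(\left|z\right|^{-L}\right)$ quantities multiplied by $e^{\pm 2\pi iz}$, which is exactly the origin of the $e^{\mp 2\pi\Im\left(z\right)}$, $\left|\sin\left(2\pi z\right)\right|$ and $\left|\cos\left(2\pi z\right)\right|$ factors. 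Your residue bookkeeping points in the right direction, but the $K=L$ cancellation must be exhibited at the level of these crossed-pole residues, not by undoing the partial fraction decomposition.
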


While proving Theorems \ref{thm3} and \ref{thm4} in Section \ref{section5}, we also obtain the following explicit bounds for the remainders in \eqref{eq85} and \eqref{eq86}.

\begin{theorem}\label{thm5} For any integers $2 \leq K \leq N$, define the remainder $R_{N,K} \left( {a,\lambda } \right)$ by \eqref{eq85}. Then we have
\begin{align*}
\left| {R_{N,K} \left( {a,\lambda } \right)} \right| \le \; & \left| {\sqrt {\frac{{2\pi }}{a}} e^{a\left( {\lambda  - \log \lambda  - 1} \right)} \widehat T_{N - K + \frac{1}{2}} \left( {a\left( {\lambda  - \log \lambda  - 1} \right)} \right)} \right|\frac{{\left( {1 + \zeta \left( K \right)} \right)\Gamma \left( K \right)}}{{\left( {2\pi } \right)^{K + 1} \left| a \right|^K }}
\\ & + \frac{{\left( {1 + \zeta \left( K \right)} \right)\Gamma \left( K \right)\Gamma \left( {N - K + \frac{1}{2}} \right)}}{{\left( {2\pi } \right)^{K + \frac{3}{2}} \left| a \right|^{N + 1} \left( {\lambda  - \log \lambda  - 1} \right)^{N - K + \frac{1}{2}} }},
\end{align*}
provided that $\left|\arg a\right|\leq \pi$.
\end{theorem}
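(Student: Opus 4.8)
The plan is to combine the explicit integral \eqref{eq9} for $R_N\left(a,\lambda\right)$ with Boyd's resurgence formulas \eqref{eq45}--\eqref{eq47} for $1/\Gamma^\ast$. First, since $z=\lambda a$ makes $z-a=\left(\lambda-1\right)a$, the algebraic prefactor in \eqref{eq9} collapses, giving
\[
R_N\left(a,\lambda\right)=\frac{\left(-1\right)^N}{\sqrt{2\pi}\,a^{N+1}}\int_0^{+\infty}\frac{t^{N-1/2}e^{-\omega t}}{1+t/a}\frac{dt}{\Gamma^\ast\left(t\right)},\qquad \omega:=\lambda-\log\lambda-1>0 .
\]
Substituting the expansion \eqref{eq45} of $1/\Gamma^\ast\left(t\right)$, the $k$-th term becomes $\frac{\left(-1\right)^{N+k}a_{2k}}{\sqrt{2\pi}\,a^{N+1}}\int_0^{+\infty}\frac{t^{N-k-1/2}e^{-\omega t}}{1+t/a}\,dt$; the elementary evaluation $\int_0^{+\infty}\frac{t^{p-1}e^{-\omega t}}{t+c}\,dt=\Gamma\left(p\right)c^{p-1}e^{c\omega}\Gamma\left(1-p,c\omega\right)$ (for $\left|\arg c\right|<\pi$), together with the relation between $\widehat T_p$ and $\Gamma\left(1-p,\cdot\right)$ from Section \ref{section5}, identifies this term with the $k$-th summand of \eqref{eq85}. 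Hence \eqref{eq85} holds with
\[
R_{N,K}\left(a,\lambda\right)=\frac{\left(-1\right)^N}{\sqrt{2\pi}\,a^{N+1}}\int_0^{+\infty}\frac{t^{N-1/2}e^{-\omega t}}{1+t/a}\,\widetilde M_K\left(t\right)dt ,
\]
a convergent integral for $\left|\arg a\right|<\pi$ (the integrand is $\mathcal{O}\left(t^{N-K+1/2}\right)$ near $0$ and decays exponentially).

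To bound this, I would substitute the explicit form \eqref{eq47} of $\widetilde M_K\left(t\right)$ and interchange the order of integration, which is legitimate for $\left|\arg a\right|<\pi$ since then $1+t/a$ is bounded away from zero along the contour and the resulting double integral converges absolutely. The inner $t$-integrals then have the shape $\int_0^{+\infty}\frac{t^{N-K-1/2}e^{-\omega t}}{\left(1+t/a\right)\left(1\pm is/t\right)}\,dt$; a partial-fraction decomposition in $t$, followed by the same incomplete-gamma evaluation, splits each of these into a ``diagonal'' piece proportional to $a^{p}e^{a\omega}\Gamma\left(1-p,a\omega\right)$ (with $p=N-K+\tfrac12$) and an ``off-diagonal'' piece proportional to $\left(\pm is\right)^{p}e^{\pm is\omega}\Gamma\left(1-p,\pm is\omega\right)$. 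Recombining the two contributions coming from \eqref{eq47}, the diagonal pieces produce the first term of the asserted bound: one extracts the factor $\bigl|\sqrt{2\pi/a}\,e^{a\omega}\widehat T_{N-K+1/2}\left(a\omega\right)\bigr|$ and estimates the remaining $s$-integral using the closed form $\left|\Gamma^\ast\left(\pm is\right)\right|=\left(1-e^{-2\pi s}\right)^{-1/2}$ (which follows from $\left|\Gamma\left(is\right)\right|^2=\pi/\left(s\sinh\pi s\right)$ and the definition of $\Gamma^\ast$), the inequality $\left(1-e^{-2\pi s}\right)^{-1/2}\le 1+\left(e^{2\pi s}-1\right)^{-1}$, and $\int_0^{+\infty}s^{K-1}\bigl(e^{-2\pi s}+\left(e^{2\pi s}-1\right)^{-1}\bigr)ds=\Gamma\left(K\right)\left(1+\zeta\left(K\right)\right)/\left(2\pi\right)^{K}$ (finite because $K\ge 2$). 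The off-diagonal pieces give the second term: here $\left|e^{\pm is\omega}\Gamma\left(1-p,\pm is\omega\right)\right|\le\left(s\omega\right)^{-p}$ (estimate the defining integral of $\Gamma\left(1-p,\cdot\right)$ along the horizontal ray issuing from $\pm is\omega$), which after the $s$-integration and the same $\Gamma^\ast$-estimate yields the constant $\left(1+\zeta\left(K\right)\right)\Gamma\left(K\right)\Gamma\left(N-K+\tfrac12\right)/\bigl(\left(2\pi\right)^{K+3/2}\left|a\right|^{N+1}\omega^{N-K+1/2}\bigr)$. The boundary case $\left|\arg a\right|=\pi$ then follows by continuity, the post-interchange representation extending continuously to $\arg a=\pm\pi$ even though the original kernel $1/\left(1+t/a\right)$ is singular there.

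The hard part will be twofold. First, the interchange of integration and the partial-fraction bookkeeping must be carried out carefully enough that the $\widehat T_{N-K+1/2}\left(a\omega\right)$-dependence survives exactly into the first bound term while every remaining factor is absorbed into the second. Second, and more delicate, one must control the factors $\left|a\mp is\right|^{-1}$ arising in the diagonal $s$-integrals uniformly for $\arg a\in\left[-\pi,\pi\right]$: near $\arg a=\pm\tfrac{\pi}{2}$ these are large for $s\approx\left|\Im a\right|$, but there the weight $e^{-2\pi s}$ is of order $e^{-2\pi\left|a\right|}$, so one splits the $s$-range using $\left|a\mp is\right|\ge\left|a\right|/2$ on $0\le s\le\left|a\right|/2$ and exponential smallness beyond, which is what keeps the clean constant $\left(1+\zeta\left(K\right)\right)\Gamma\left(K\right)/\left(2\pi\right)^{K+1}$.
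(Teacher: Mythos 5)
Your first stage coincides with the paper's: substituting \eqref{eq45} into \eqref{eq9} and using the incomplete-gamma form of $\widehat T_p$ gives \eqref{eq85} with $R_{N,K}\left(a,\lambda\right)=\frac{\left(-1\right)^N}{\sqrt{2\pi}\,a^{N+1}}\int_0^{+\infty}\frac{t^{N-\frac{1}{2}}e^{-\omega t}}{1+t/a}\widetilde M_K\left(t\right)dt$, exactly the paper's \eqref{eq71}. The gap is in the second stage. Your partial-fraction decomposition of the kernel $\frac{1}{\left(1+t/a\right)\left(1\pm is/t\right)}$ in $t$ produces coefficients $\frac{a}{a\mp is}$ and $\frac{\mp is}{a\mp is}$ on the ``diagonal'' and ``off-diagonal'' pieces, so both pieces carry the factor $\left(a\mp is\right)^{-1}$. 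On the rays $\arg a=\pm\frac{\pi}{2}$ this factor is genuinely singular at $s=\left|a\right|$ (it equals $1/\bigl|\left|a\right|-s\bigr|$ there), and since you propose to estimate the two pieces \emph{separately} by absolute values, each resulting $s$-integral diverges logarithmically at that point — the singularity only cancels in the sum. Your remedy (split the $s$-range at $\left|a\right|/2$ and invoke the smallness of $e^{-2\pi s}$ beyond) does not cure a non-integrable singularity, and even away from $\arg a=\pm\frac{\pi}{2}$ it cannot reproduce the stated constants: to get exactly $\frac{\left(1+\zeta\left(K\right)\right)\Gamma\left(K\right)}{\left(2\pi\right)^{K+1}\left|a\right|^K}$ and $\frac{\left(1+\zeta\left(K\right)\right)\Gamma\left(K\right)\Gamma\left(N-K+\frac{1}{2}\right)}{\left(2\pi\right)^{K+\frac{3}{2}}\left|a\right|^{N+1}\omega^{N-K+\frac{1}{2}}}$ you would need $\left|a\mp is\right|^{-1}\le\left|a\right|^{-1}$ uniformly in $s>0$, which is false, and the $\left|a\right|/2$ splitting already costs a factor $2$ plus tail terms.

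The paper avoids this entirely by never partial-fractioning in $t$. Writing $a=re^{i\theta}$, $t=r\tau$, it splits the kernel \emph{inside} $\widetilde M_K\left(r\tau\right)$ at $\tau=1$ via the identity $\frac{1}{1\pm is/\left(r\tau\right)}=\frac{1}{1\pm is/r}+\left(\tau-1\right)\frac{1}{\left(1\mp ir\tau/s\right)\left(1\pm is/r\right)}$, whose two kernels have modulus at most $1$ for all positive $r,s,\tau$ (no factor $\left(a\mp is\right)^{-1}$ ever appears). The first piece makes the $\tau$-integral factor out exactly as the terminant integral, and Boyd's inequality $\frac{1}{2\pi}\int_0^{+\infty}s^{K-1}e^{-2\pi s}\left|\Gamma^\ast\left(\pm is\right)\right|ds\le\frac{1}{2}\frac{\left(1+\zeta\left(K\right)\right)\Gamma\left(K\right)}{\left(2\pi\right)^{K+1}}$ (which your $\left|\Gamma^\ast\left(\pm is\right)\right|=\left(1-e^{-2\pi s}\right)^{-1/2}$ computation essentially reproves) gives the first term with the clean constant; the second piece is bounded using $\left|\left(\tau-1\right)/\left(\tau+e^{i\theta}\right)\right|\le1$, yielding $\Gamma\left(N-K+\frac{1}{2}\right)/\left(r\omega\right)^{N-K+\frac{1}{2}}$ and hence the second term — uniformly for $\left|\arg a\right|<\pi$, with the closed sector obtained by continuity. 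To repair your argument you would have to either keep the two partial-fraction pieces combined near $s=\left|a\right|$ or switch to the paper's $\tau=1$ splitting.
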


\begin{theorem}\label{thm6} For any integers $2 \leq K < N$ and $2 \leq L \leq M$, define the remainder $R_{N,M,K,L} \left( z \right)$ by \eqref{eq86}. Then we have
\begin{align*}
\left| {R_{N,M,K,L} \left( z \right)} \right| \le \; & \left( {2\sqrt K  + 1} \right)\left( {\left| {e^{2\pi iz} \widehat T_{N-K}\left( {2\pi iz} \right)} \right| + \left| {e^{ - 2\pi iz} \widehat T_{N-K}\left( { - 2\pi iz} \right)} \right|} \right)\frac{{\zeta \left( K \right)\Gamma \left( K \right)}}{{\left( {2\pi } \right)^{K + 1} \left| z \right|^K }}
\\ & + \left( {6K + 2} \right)\frac{{\Gamma \left( {N - K} \right)\zeta \left( K \right)\Gamma \left( K \right)}}{{\left( {2\pi } \right)^{N + 2} \left| z \right|^N }}
\\ & + \left( {2\sqrt L  + 1} \right)\left( {\left| {e^{2\pi iz} \widehat T_{M - L + \frac{1}{2}} \left( {2\pi iz} \right)} \right| + \left| {e^{ - 2\pi iz} \widehat T_{M - L + \frac{1}{2}} \left( { - 2\pi iz} \right)} \right|} \right)\frac{{\zeta \left( L \right)\Gamma \left( L \right)}}{{\left( {2\pi } \right)^{L + 1} \left| z \right|^L }}
\\ & + \left( {6L + 2} \right)\frac{{\Gamma \left( {M - L + \frac{1}{2}} \right)\zeta \left( L \right)\Gamma \left( L \right)}}{{\left( {2\pi } \right)^{M + \frac{5}{2}} \left| z \right|^{M + \frac{1}{2}} }},
\end{align*}
as long as $\left|\arg z\right| \leq \frac{\pi}{2}$.
\end{theorem}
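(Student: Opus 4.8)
The plan is to extract the inequality from the proof of Theorem~\ref{thm4}, in which $R_{N,M,K,L}(z)$ arises as the error in the terminant re-expansion \eqref{eq95} of the remainder $R_{N,M}(z)$: this re-expansion is obtained by substituting Boyd's resurgence formulas into the integral representation of $R_{N,M}(z)$, and the error committed when it is terminated after the $K$th term of its even (integer-power) family and the $L$th term of its odd (half-integer-power) family admits a bound of the type ``(numerical constant depending on the truncation index) $\times$ (first omitted term)''. Theorem~\ref{thm6} is what this becomes once the first omitted terms are estimated. Concretely, the starting point is the integral representation \eqref{eq17} of $R_N(z)$; writing $\frac{1}{1-e^{\mp3\pi i/4}(t/z)^{1/2}}=\frac12\frac{1}{1\mp it/z}+\frac12\frac{e^{\mp3\pi i/4}(t/z)^{1/2}}{1\mp it/z}$ separates the even and odd families, and inserting Boyd's expansion \eqref{eq45}--\eqref{eq46} of $\Gamma^\ast(\pm it)$, truncated to order $K$ in the first family and $L$ in the second, reproduces exactly the terminant terms of \eqref{eq95} (through the integral representation of $\widehat T_p$ recorded in Section~\ref{section5}) and leaves $R_{N,M,K,L}(z)$ as a combination of integrals carrying the tails $M_K(\pm it)$, respectively $M_L(\pm it)$, of \eqref{eq45}.

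These tail integrals are of exactly the same analytic form as the integrals that generated the terms $\frac{a_{2k}}{z^k}\bigl(e^{2\pi iz}\widehat T_{N-k}(2\pi iz)\pm e^{-2\pi iz}\widehat T_{N-k}(-2\pi iz)\bigr)$ of \eqref{eq95}, only with $a_{2k}$ replaced by the tail $M_K$; the elementary terminant estimates of Section~\ref{section5} therefore bound each of them by a numerical constant (the factors $2\sqrt K+1$, $6K+2$ and their $L$-analogues --- the accumulation constants arising when the terminant series is terminated) times the first omitted term, which splits into the genuinely terminant-valued part $\bigl(\bigl|e^{2\pi iz}\widehat T_{N-K}(2\pi iz)\bigr|+\bigl|e^{-2\pi iz}\widehat T_{N-K}(-2\pi iz)\bigr|\bigr)\frac{|a_{2K}|}{|z|^K}$ and a purely gamma-valued correction of order $|a_{2K}|\,\Gamma(N-K)(2\pi)^{-(N-K)}|z|^{-N}$ (here the hypothesis $K<N$ is needed so that $\Gamma(N-K)$ is finite). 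The odd family is handled identically, with $\widehat T_{N-K}\mapsto\widehat T_{M-L+\frac12}$ and $\Gamma(N-K)\mapsto\Gamma(M-L+\frac12)$, which only requires $L\le M$. All of this is carried out for $\left|\arg z\right|\le\frac\pi2$, where the basic inequality $\bigl|1-e^{\mp3\pi i/4}(t/z)^{1/2}\bigr|\ge1$ holds along the whole contour (the subtracted quantity lying in the closed left half-plane), so that the representation and its manipulations remain pole-free --- this is why the estimate is asserted only in this sector.

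It remains to estimate $|a_{2K}|$ and $|a_{2L}|$. From the resurgence representation \eqref{eq18} for $a_{2K}$ (available for these indices) together with the closed-form bound $|\Gamma^\ast(\pm it)|=(1-e^{-2\pi t})^{-1/2}\le(1-e^{-2\pi t})^{-1}$ --- which follows from the reflection formula \eqref{eq51} and $\overline{\Gamma^\ast(it)}=\Gamma^\ast(-it)$ --- one obtains $|a_{2K}|\le\frac{1}{\pi}\int_0^{+\infty}\frac{t^{K-1}e^{-2\pi t}}{1-e^{-2\pi t}}\,dt=\frac{2\zeta(K)\Gamma(K)}{(2\pi)^{K+1}}$; the requirement $K\ge2$ is exactly what makes $\zeta(K)$ finite, and likewise for $L$. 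Substituting these estimates into the bounds of the previous paragraph and adding the four contributions yields the stated inequality.

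The step I expect to be the genuine work is making the first two stages fully explicit: passing from \eqref{eq17} and \eqref{eq46} to the iterated integral with every constant retained, performing the partial-fraction split in the square-root variable so that precisely the terminant structure of \eqref{eq95} emerges, and --- since the argument of $\Gamma^\ast$ there sits on the imaginary axis, the boundary of the region where \eqref{eq46}--\eqref{eq47} are literally valid --- justifying the attendant contour deformations and producing a principal-value-free representation before any majorisation; together with pinning down the precise accumulation constants $2\sqrt K+1$ and $6K+2$ from the terminant lemmas of Section~\ref{section5}. Once that is in place, the remaining majorisation and summation are routine.
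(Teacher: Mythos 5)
Your overall architecture agrees with the paper's: split the even and odd families (the paper does this by writing $R_2(z)$ as the four integrals \eqref{eq75}, expanding with \eqref{eq12} to reach the representation \eqref{eq76} of $R_{N,M}(z)$, note the correct partial-fraction identity is $\frac{1}{1-w}=\frac{1}{1-w^2}+\frac{w}{1-w^2}$ with $w=e^{\mp 3\pi i/4}(t/z)^{1/2}$, $w^2=\pm it/z$ — your version carries spurious factors $\frac12$), then insert \eqref{eq45} truncated at $K$ and $L$, recognise the terminant functions, and be left with four tail integrals carrying $M_K(\pm it)$ and $M_L(\pm it)$, exactly as in \eqref{eq77}. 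The genuine gap is that the decisive quantitative step — bounding those four tail integrals with the stated constants — is not carried out, and the route you sketch for it would not deliver Theorem \ref{thm6}. There are no ``terminant lemmas of Section \ref{section5}'' producing the factors $2\sqrt K+1$ and $6K+2$: in the paper these come from substituting the rotated-contour representation of $M_K(\pm ir\tau)$ (from the reference [Nemes]/Boyd), splitting it into a part reproducing a terminant-type integral plus a correction with the extra factor $\frac{\tau-1}{\tau+ie^{i\theta}}$, choosing $\varphi=\arctan\left(K^{-1/2}\right)$ so that $\csc\varphi\cos^{-K}\varphi\le 2\sqrt K$ and $\csc^{2}\varphi\cos^{-K}\varphi\le 3K$, and invoking the bound $\frac{1}{2\pi}\int_0^{+\infty}t^{K-1}e^{-2\pi t}\left|\Gamma^\ast\!\left(\frac{ite^{i\varphi}}{\cos\varphi}\right)\right|dt\le\frac{\zeta(K)\Gamma(K)}{(2\pi)^{K+1}}$; each of the four integrals then gives $2\sqrt K+1$ (resp. $2\sqrt L+1$) on the terminant part and $3K+1$ (resp. $3L+1$) on the gamma part, and summation produces $6K+2$, $6L+2$. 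None of this is implied by the asymptotic estimates \eqref{eq67}--\eqref{eq70}.

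Moreover, your plan to factor the bound through the first omitted coefficient and then use $|a_{2K}|\le\frac{2\zeta(K)\Gamma(K)}{(2\pi)^{K+1}}$ (that estimate itself is fine, via \eqref{eq18}, \eqref{eq51} and $|\Gamma^\ast(\pm it)|=(1-e^{-2\pi t})^{-1/2}$) cannot reproduce the theorem: the factor $\frac{\zeta(K)\Gamma(K)}{(2\pi)^{K+1}}$ in Theorem \ref{thm6} does not arise as a bound for $|a_{2K}|$ but directly from the integrals of $|\Gamma^\ast|$ above, so your assembly would be at least a factor $2$ too large in the terminant-valued terms (and similarly misaligned in the $\Gamma(N-K)$ and $\Gamma(M-L+\frac12)$ terms, where the paper has $(2\pi)^{-(N+2)}$ and $(2\pi)^{-(M+\frac52)}$). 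In short: the skeleton is right, but the intermediate claim ``accumulation constant times first omitted term'' is precisely what has to be proved, it is not elementary, and as formulated it proves a weaker inequality than the one stated.
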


The rest of the paper is organised as follows. In Section \ref{section2}, we prove the resurgence formulas stated in Theorems \ref{thm1} and \ref{thm2}. In Section \ref{section3}, we give explicit and numerically computable error bounds for the asymptotic series \eqref{eq43} and \eqref{eq44} using the results of Section \ref{section2}. In Section \ref{section4}, asymptotic approximations for the coefficients $b_n \left( \lambda  \right)$ and $a_n$ are given. In Section \ref{section5}, we prove the exponentially improved expansions presented in Theorems \ref{thm3} and \ref{thm4} and the error bounds given in Theorems \ref{thm5} and \ref{thm6}, and provide a detailed discussion of the Stokes phenomenon related to the expansions \eqref{eq43} and \eqref{eq44}.

\section{Proofs of the resurgence formulas}\label{section2} If $z = \lambda a$, with $\lambda\geq 1$ fixed, then \eqref{eq0} becomes
\begin{equation}\label{eq1}
\Gamma \left( {a,z} \right) = z^a e^{ - z} \int_0^{ + \infty } {\exp \left( { - a\left( {\lambda e^t  - t - \lambda } \right)} \right)dt},
\end{equation}
provided that $\Re \left( a \right) > 0$. The analysis is significantly different according to whether $\lambda>1$ or $\lambda=1$. The saddle points of the integrand are the roots of the equation $\lambda e^t  - 1 =0$. Hence, the saddle points are given by $t^{\left(k\right)}=-\log \lambda +2\pi i k$ where $k$ is an arbitrary integer. We denote by $\mathscr{C}^{\left(k\right)}\left(\theta\right)$ the portion of the steepest paths that pass through the saddle point $t^{\left(k\right)}$. Here, and subsequently, we write $\theta = \arg a$. As for the path of integration $\mathscr{P}\left(\theta\right)$ in \eqref{eq1}, we take that connected component of
\[
\left\{ {t \in \mathbb{C}:\arg \left[ {e^{i\theta } \left( {\lambda e^t  - t - \lambda} \right)} \right] = 0} \right\} ,
\]
which is the positive real axis for $\theta=0$ and is the continuous deformation of the positive real axis as $\theta$ varies. If $\lambda=1$, the path $\mathscr{P}\left(\theta\right)$ is part of the contour $\mathscr{C}^{\left(0\right)}\left(\theta\right)$.

\subsection{Case (i): $\lambda>1$} Let $f\left( {t,\lambda } \right) = \lambda e^t - t - \lambda$ for some fixed $\lambda >1$. Hence, we can write \eqref{eq1} as
\[
\Gamma \left( {a,z} \right) = z^a e^{ - z} \int_0^{ + \infty } {e^{ - af\left( {t,\lambda } \right)} dt} 
\]
for any $\Re \left( a \right) > 0$. For simplicity, we assume that $a>0$. In due course, we shall appeal to an analytic continuation argument to extend our results to complex $a$. If
\begin{equation}\label{eq2}
\tau = f\left( {t,\lambda } \right),
\end{equation}
then $\tau$ is real on the curve $\mathscr{P}\left(0\right)$, and, as $t$ travels along this curve from $0$ to $+\infty$, $\tau$ increases from $0$ to $+\infty$. Therefore, corresponding to each positive value of $\tau$, there is a value of $t$, say $t\left(\tau\right)$, satisfying \eqref{eq2} with $t\left(\tau\right)>0$. In terms of $\tau$, we have
\[
\Gamma \left( {a,z} \right) = z^a e^{ - z} \int_0^{ + \infty } {e^{ - a\tau}\frac{dt}{d\tau}d\tau} = z^a e^{ - z} \int_0^{ + \infty } {\frac{{e^{ - a\tau} }}{{f'\left( {t\left( \tau \right),\lambda } \right)}}d\tau} .
\]
Following Howls, we express the function involving $t\left(\tau\right)$ as a contour integral using the residue theorem, to find
\[
\Gamma \left( {a,z} \right) = z^a e^{ - z} \int_0^{ + \infty } {e^{ - a\tau } \frac{1}{{2\pi i}}\oint_\Gamma  {\frac{f^{-1}\left( {u,\lambda } \right)}{{1 - \tau f^{-1}\left( {u,\lambda } \right)}}du} d\tau } ,
\]
where the contour $\Gamma$ encircles the path $\mathscr{P}\left(0\right)$ in the positive direction and does not enclose any of the saddle points $t^{\left(k\right)}$ (see Figure \ref{fig1}). Now, we employ the well-known expression for non-negative integer $N$
\begin{equation}\label{eq12}
\frac{1}{1 - x} = \sum\limits_{n = 0}^{N-1} {x^n}  + \frac{x^N}{1 - x},\; x \neq 1,
\end{equation}
to expand the function under the contour integral in powers of $\tau f^{ - 1} \left( {u,\lambda } \right)$. The result is
\[
\Gamma \left( {a,z} \right) =  z^a e^{ - z} \left( {\sum\limits_{n = 0}^{N - 1} {\int_0^{ + \infty } {\tau ^n e^{ - a\tau }  \frac{1}{{2\pi i}}\oint_\Gamma  {\frac{{du}}{{f^{n + 1} \left( {u,\lambda } \right)}}} d\tau } }  + R_N \left( {a,\lambda } \right)} \right)
\]
where
\begin{equation}\label{eq3}
R_N \left( {a,\lambda } \right) =  \int_0^{ + \infty } {\tau ^N e^{ - a\tau } \frac{1}{{2\pi i}}\oint_\Gamma  {\frac{f^{-N-1} \left( {u,\lambda } \right)}{{1 - \tau f^{-1}\left( {u,\lambda } \right) }}du} d\tau } .
\end{equation}
The path $\Gamma$ in the sum can be shrunk into a small circle around $0$, and we arrive at
\begin{equation}\label{eq4}
\Gamma \left( {a,z} \right) =  z^a e^{ - z} \left( {\sum\limits_{n = 0}^{N - 1} {\frac{{\left( { - a} \right)^n b_n \left( \lambda  \right)}}{{\left( {z - a} \right)^{2n + 1} }}}  + R_N \left( {a,\lambda } \right)} \right),
\end{equation}
where
\begin{align*}
b_n \left( \lambda  \right) & = \left( { - 1} \right)^n \frac{{\left( {\lambda -1} \right)^{2n + 1} \Gamma \left( {n + 1} \right)}}{{2\pi i}}\oint_{\left( {0^ +  } \right)} {\frac{{du}}{{f^{n + 1} \left( {u,\lambda } \right)}}} \\ & = \left( { - 1} \right)^n \left( {\lambda -1} \right)^n \left[ {\frac{{d^n }}{{dt^n }}\left( {\frac{{\left( {\lambda-1 } \right)t}}{{\lambda e^{ t} - t - \lambda }}} \right)^{n + 1} } \right]_{t = 0} .
\end{align*}

\begin{figure}[!t]
\def\svgwidth{0.6\columnwidth}
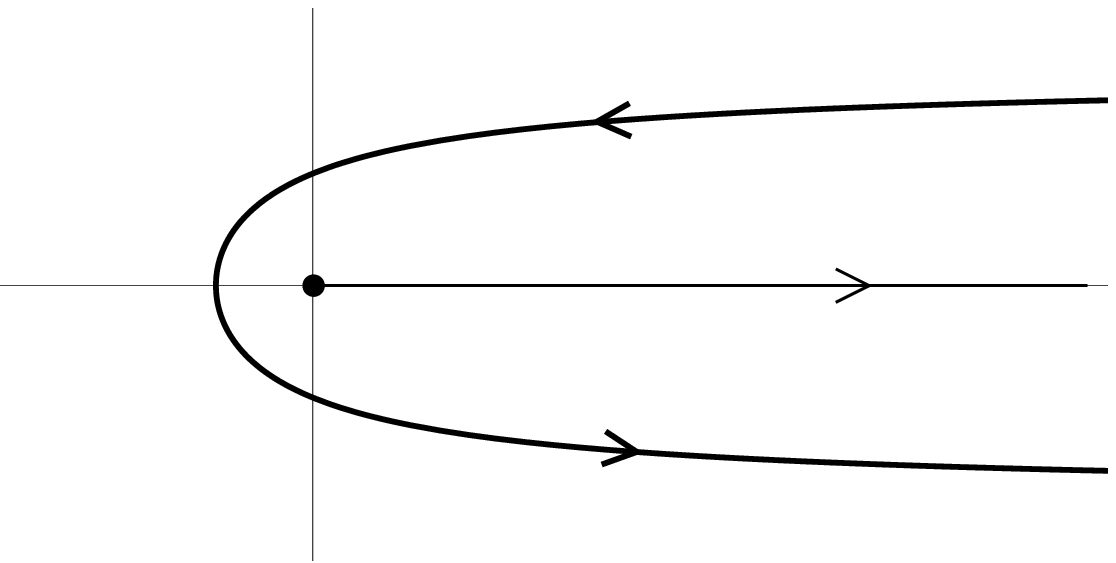
\caption{The contour $\Gamma$ encircling the path $\mathscr{P}\left(0\right)$.}
\label{fig1}
\end{figure}

Performing the change of variable $a\tau  = s$ in \eqref{eq3} yields
\begin{equation}\label{eq5}
R_N \left( {a,\lambda } \right) = \frac{{\left( { - a} \right)^N }}{{\left( {z - a} \right)^{2N + 1} }}\left( { - 1} \right)^N \left( {\lambda  - 1} \right)^{2N + 1} \int_0^{ + \infty } {s^N e^{ - s} \frac{1}{{2\pi i}}\oint_\Gamma  {\frac{{f^{ - N - 1} \left( {u,\lambda } \right)}}{{1 - \left( {s/a} \right)f^{ - 1} \left( {u,\lambda } \right)}}du} ds} .
\end{equation}
This representation of $R_N \left( {a,\lambda } \right)$ and the formula \eqref{eq4} can be continued analytically if we choose $\Gamma = \Gamma\left(\theta\right)$ to be an infinite contour that surrounds the path $\mathscr{P}\left(\theta\right)$ in the anti-clockwise direction and that does not encircle any of the saddle points $t^{\left(k\right)}$. This continuation argument works until the path $\mathscr{P}\left(\theta\right)$ runs into a saddle point. In the terminology of Howls, such saddle points are called adjacent to the endpoint $0$. In our case, when $\theta =\pm\pi$, the path $\mathscr{P}\left(\theta\right)$ connects to the saddle point $t^{\left(0\right)}=-\log \lambda$. This is the adjacent saddle. The set
\[
\Delta = \left\{u\in \mathscr{P}\left(\theta\right): -\pi <\theta<\pi \right\}
\]
forms a domain in the complex plane whose boundary is a steepest descent path through the adjacent saddle (see Figure \ref{fig2}). This path is $\mathscr{C}^{\left(0\right)}\left(\pi\right)$, and it is called the adjacent contour to the endpoint $0$. For $N\geq 0$ and fixed $a$, the function under the contour integral in \eqref{eq5} is an analytic function of $u$ in the domain $\Delta$ excluding $\mathscr{P}\left(\theta\right)$, therefore we can deform $\Gamma$ over the adjacent contour. We thus find that for $-\pi <\theta<\pi$ and $N\geq0$, \eqref{eq5} may be written
\begin{equation}\label{eq6}
R_N \left( {a,\lambda } \right) = \frac{{\left( { - a} \right)^N }}{{\left( {z - a} \right)^{2N + 1} }}\left( { - 1} \right)^N \left( {\lambda  - 1} \right)^{2N + 1} \int_0^{ + \infty } {s^N e^{ - s} \frac{1}{{2\pi i}}\int_{\mathscr{C}^{\left(0\right)}\left(\pi\right)}  {\frac{{f^{ - N - 1} \left( {u,\lambda } \right)}}{{1 - \left( {s/a} \right)f^{ - 1} \left( {u,\lambda } \right)}}du} ds} .
\end{equation}
Now we make the change of variable
\[
s = t\frac{{\left| {f\left( { - \log \lambda ,\lambda } \right) - f\left( {0,\lambda } \right)} \right|}}{{f\left( { - \log \lambda ,\lambda } \right) - f\left( {0,\lambda } \right)}}f\left( {u,\lambda } \right) =  - tf\left( {u,\lambda } \right).
\]
Clearly, by the definition of the adjacent contour, $t$ is positive. The quantity $f\left( { - \log \lambda ,\lambda } \right) - f\left( {0,\lambda } \right) = 1 + \log \lambda  - \lambda$ was essentially called a ``singulant" by Dingle \cite[p. 147]{Dingle}. With this change of variable, the representation \eqref{eq6} for $R_N \left( {a,\lambda } \right)$ becomes
\begin{equation}\label{eq8}
R_N \left( {a,\lambda } \right) = \frac{{\left( { - a} \right)^N }}{{\left( {z - a} \right)^{2N + 1} }}\left( {\lambda  - 1} \right)^{2N + 1} \int_0^{ + \infty } {\frac{{t^N }}{{1 + t/a}}\frac{{ - 1}}{{2\pi i}}\int_{\mathscr{C}^{\left(0\right)}\left(\pi\right)} {e^{tf\left( {u,\lambda } \right)} du} dt} ,
\end{equation}
for $-\pi <\theta<\pi$ and $N\geq0$. To evaluate the contour integral, we proceed as follows. We substitute $u =  - v - \log \lambda$ and revert the orientation of the resulting path of integration, to get
\begin{equation}\label{eq7}
\frac{{ - 1}}{{2\pi i}}\int_{\mathscr{C}^{\left(0\right)}\left(\pi\right)} {e^{tf\left( {u,\lambda } \right)} du}  = -\frac{{e^{ - t\left( {\lambda  - \log \lambda  - 1} \right)} }}{{2\pi i}}\int_{\mathscr{H}} {e^{t\left( {e^{ - v}  + v - 1} \right)} dv} .
\end{equation}
The path $\mathscr{H}$ of integration is shown in Figure \ref{fig3}. Using Hankel's formula \cite[5.9.E2]{NIST} for the reciprocal of the gamma function, we find
\[
\frac{1}{{\Gamma \left( t \right)}} = \frac{t}{{\Gamma \left( {t + 1} \right)}} = \frac{t}{{2\pi i}}\int_{ - \infty }^{\left( {0 + } \right)} {e^s s^{ - t - 1} ds}  =  - \frac{{t^{1 - t} e^t }}{{2\pi i}}\int_{ - \infty }^{\left( {0 + } \right)} {e^{t\left( {e^{ - v}  + v - 1} \right)} dv}  =  - \frac{{t^{1 - t} e^t }}{{2\pi i}}\int_{\mathscr{H}} {e^{t\left( {e^{ - v}  + v - 1} \right)} dv} ,
\]
where we have substituted $s = te^{ - v}$ and have used the fact that, by Cauchy's theorem, the paths of integration can be deformed. Combining this expression with \eqref{eq7} and using the definition of the scaled gamma function, we obtain
\[
\frac{{ - 1}}{{2\pi i}}\int_{\mathscr{C}^{\left(0\right)}\left(\pi\right)} {e^{tf\left( {u,\lambda } \right)} du}  = \frac{1}{{\sqrt {2\pi } }}t^{ - \frac{1}{2}} e^{ - t\left( {\lambda  - \log \lambda  - 1} \right)} \frac{1}{{\Gamma^\ast  \left( t \right)}}.
\]
Substitution into \eqref{eq8} then yields the desired result \eqref{eq9}. To prove the second representation in \eqref{eq10}, we apply \eqref{eq9} for the right-hand side of
\begin{equation}\label{eq19}
b_n \left( \lambda  \right) = \frac{{\left( {z - a} \right)^{2n + 1} }}{{\left( { - a} \right)^n }}\left( {R_n \left( {a,\lambda } \right) - R_{n + 1} \left( {a,\lambda } \right)} \right).
\end{equation}

\begin{figure}[!t]
\def\svgwidth{0.6\columnwidth}
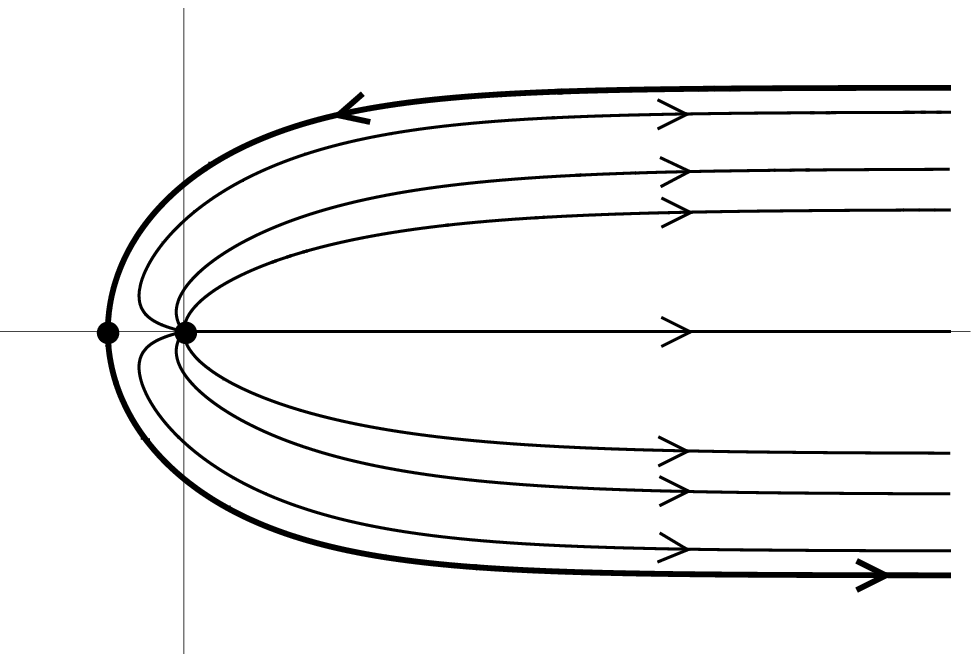
\caption{The path $\mathscr{P}\left(\theta\right)$ emanating from the origin when (i) $\theta=0$, (ii) $\theta=-\frac{\pi}{2}$, (iii) $\theta=-\frac{2\pi}{3}$, (iv) $\theta=-\frac{9\pi}{10}$, (v) $\theta=\frac{\pi}{2}$, (vi) $\theta=\frac{2\pi}{3}$ and (vii) $\theta=\frac{9\pi}{10}$. The path $\mathscr{C} ^{\left( 0 \right)} \left( { \pi} \right)$ is the adjacent contour to $0$. The domain $\Delta$ comprises all points inside this path.}
\label{fig2}
\end{figure}

\begin{figure}[!t]
\def\svgwidth{0.5\columnwidth}
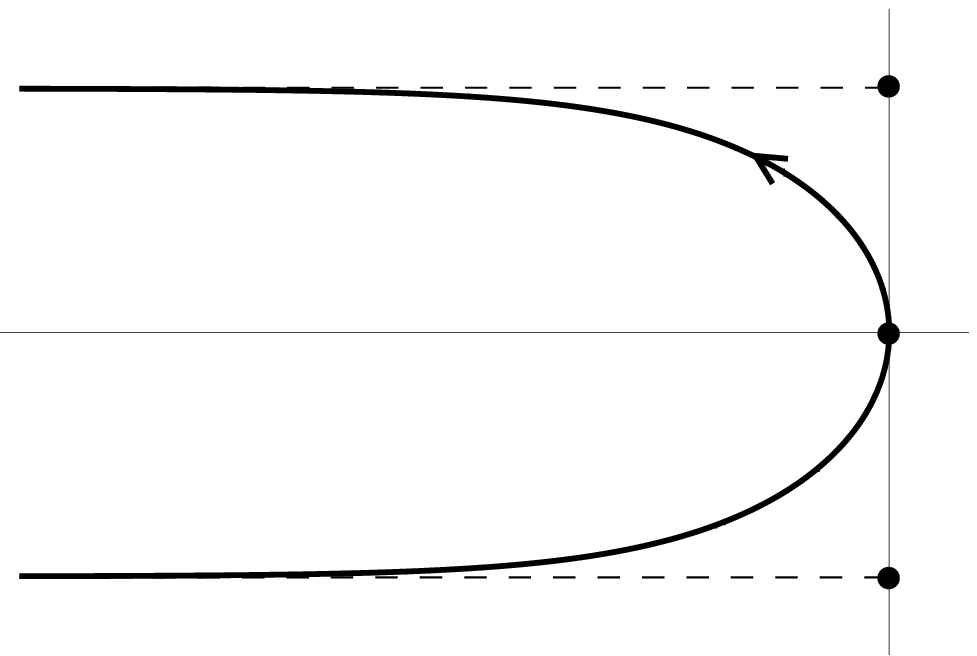
\caption{The path $\mathscr{H}$ of integration in \eqref{eq7}.}
\label{fig3}
\end{figure}

\subsection{Case (ii): $\lambda=1$} Let $f\left( t \right) = e^t - t - 1$. Hence, we can write \eqref{eq1} as
\[
\Gamma \left( {z,z} \right) = z^z e^{ - z} \int_0^{ + \infty } {e^{ - zf\left( t \right)} dt} 
\]
for any $\Re \left( z \right) > 0$. For simplicity, we assume that $z>0$. In due course, we shall appeal to an analytic continuation argument to extend our results to complex $z$. If
\begin{equation}\label{eq11}
\tau = f\left(t\right),
\end{equation}
then $\tau$ is real on the curve $\mathscr{P}\left(0\right)$, and, as $t$ travels along this curve from $0$ to $+\infty$, $\tau$ increases from $0$ to $+\infty$. Therefore, corresponding to each positive value of $\tau$, there is a value of $t$, say $t\left(\tau\right)$, satisfying \eqref{eq11} with $t\left(\tau\right)>0$. In terms of $\tau$, we have
\[
\Gamma \left( {z,z} \right) = z^z e^{ - z} \int_0^{ + \infty } {e^{ - z\tau } \frac{{dt}}{{d\tau }}d\tau}  = z^z e^{ - z} \int_0^{ + \infty } {\frac{{e^{ - z\tau } }}{{f'\left( {t\left( \tau  \right)} \right)}}d\tau} .
\]
As in the first case, we express the function involving $t\left(\tau\right)$ as a contour integral using the residue theorem, to obtain
\[
\Gamma \left( {z,z} \right) = \mathop {\lim }\limits_{T \to  + \infty } \frac{{z^z e^{ - z} }}{2}\int_0^{ T } {\tau ^{ - \frac{1}{2}} e^{ - z\tau } \frac{1}{{2\pi i}}\oint_\Gamma  {\frac{{f^{ - \frac{1}{2}} \left( u \right)}}{{1 - \tau ^{\frac{1}{2}} f^{ - \frac{1}{2}} \left( u \right)}}du} } d\tau ,
\]
where the contour $\Gamma=\Gamma\left(\tau\right)$ encircles $t\left(\tau\right)$ in the anti-clockwise direction and does not enclose any of the saddle points $t^{\left(k\right)}\neq t^{\left(0\right)}$. The square root is defined so that ${f^{\frac{1}{2}} \left( t \right)}$ is positive
on the path $\mathscr{P}\left(0\right)$. Next we apply the expression \eqref{eq12} to expand the function under the contour integral in powers of $\tau ^{\frac{1}{2}} f^{ - \frac{1}{2}} \left( u \right)$. The result is
\[
\Gamma \left( {z,z} \right) = \sqrt {\frac{\pi }{2}} z^{z - \frac{1}{2}} e^{ - z} \left( {\sqrt {\frac{z}{{2\pi }}} \sum\limits_{n = 0}^{N - 1} {\int_0^{ + \infty } {\tau ^{\frac{{n - 1}}{2}} e^{ - z\tau } \frac{1}{{2\pi i}}\oint_\Gamma  {\frac{{du}}{{f^{\frac{{n + 1}}{2}} \left( u \right)}}} d\tau } }  + R_N \left( z \right)} \right)
\]
for $N \geq 2$, where
\begin{equation}\label{eq13}
R_N \left( z \right) = \sqrt {\frac{z}{{2\pi }}} \int_0^{ + \infty } {\tau ^{\frac{{N - 1}}{2}} e^{ - z\tau } \frac{1}{{2\pi i}}\oint_\Gamma  {\frac{{f^{ - \frac{{N + 1}}{2}} \left( u \right)}}{{1 - \tau ^{\frac{1}{2}} f^{ - \frac{1}{2}} \left( u \right)}}du} d\tau } ,
\end{equation}
and the contour $\Gamma$ can be taken to be a path that encloses $\mathscr{P}\left(0\right)$ and has positive orientation (cf. Figure \ref{fig1}). The omission of the limiting process is justified by the convergence of the integral for $N \geq 2$. The path $\Gamma$ in the sum can be shrunk into a small circle around $t^{\left(0\right)}=0$, and we arrive at
\[
\Gamma \left( {z,z} \right) = \sqrt {\frac{\pi }{2}} z^{z - \frac{1}{2}} e^{ - z} \left( {\sum\limits_{n = 0}^{N - 1} {\frac{{a_n }}{{z^{\frac{n}{2}} }}}  + R_N \left( z \right)} \right)
\]
where
\[
a_n  = \frac{{\Gamma \left( {\frac{{n + 1}}{2}} \right)}}{{\sqrt {2\pi } }}\frac{1}{{2\pi i}}\oint_{\left( {0^ +  } \right)} {\frac{{du}}{{f^{\frac{{n + 1}}{2}} \left( u \right)}}}  = \frac{1}{{2^{\frac{n}{2}} \Gamma \left( {\frac{n}{2} + 1} \right)}}\left[ {\frac{{d^n }}{{dt^n }}\left( {\frac{1}{2}\frac{{t^2 }}{{e^t  - t - 1}}} \right)^{\frac{{n + 1}}{2}} } \right]_{t = 0} .
\]

\begin{figure}[!t]
\def\svgwidth{0.5\columnwidth}
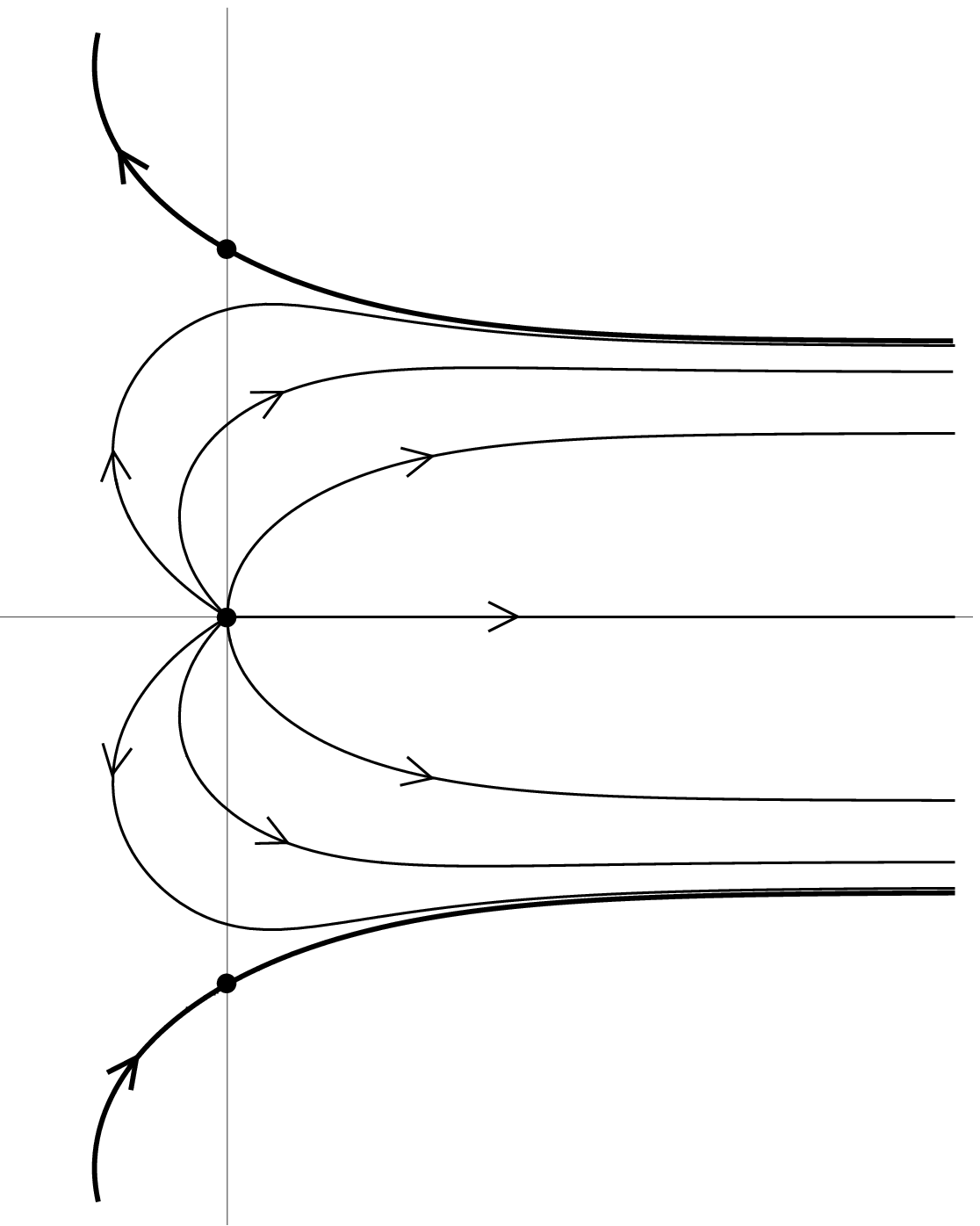
\caption{The path $\mathscr{P}\left(\theta\right)$ emanating from the saddle point $t^{\left(0\right)}$ when (i) $\theta=0$, (ii) $\theta=-\pi$, (iii) $\theta=-\frac{4\pi}{3}$, (iv) $\theta=-\frac{59\pi}{40}$, (v) $\theta=\pi$, (vi) $\theta=\frac{4\pi}{3}$ and (vii) $\theta=\frac{59\pi}{40}$. The paths $\mathscr{C} ^{\left( 1 \right)} \left( { -\frac{\pi}{2}} \right)$ and $\mathscr{C} ^{\left( -1 \right)} \left( { \frac{\pi}{2}} \right)$ are the adjacent contours to $0$.}
\label{fig4}
\end{figure}

Applying the change of variable $z\tau = s$ in \eqref{eq13} gives
\begin{equation}\label{eq14}
R_N \left( z \right) = \frac{1}{{\sqrt {2\pi } z^{\frac{N}{2}} }}\int_0^{ + \infty } {s^{\frac{{N - 1}}{2}} e^{ - s} \frac{1}{{2\pi i}}\oint_\Gamma  {\frac{{f^{ - \frac{{N + 1}}{2}} \left( u \right)}}{{1 - \left( {s/z} \right)^{\frac{1}{2}} f^{ - \frac{1}{2}} \left( u \right)}}du} ds} .
\end{equation}
As in the first case, we need to locate the adjacent saddle points. When $\theta = -\frac{3\pi}{2}$, the path $\mathscr{P}\left(\theta\right)$ connects to the saddle point $t^{\left(1\right)}=2\pi i$. Similarly, when $\theta = \frac{3\pi}{2}$, the path $\mathscr{P}\left(\theta\right)$ connects to the saddle point $t^{\left(-1\right)}=-2\pi i$. Therefore, the adjacent saddles are $t^{\left(\pm1\right)}$. The set
\[
\Delta  = \left\{ {u \in \mathscr{P}\left( \theta  \right) : - \frac{3\pi}{2} < \theta  < \frac{3\pi}{2}} \right\}
\]
forms a domain in the complex plane whose boundary contains portions of steepest descent paths through the adjacent saddles (see Figure \ref{fig4}). These paths are $\mathscr{C} ^{\left( 1 \right)} \left( { -\frac{\pi }{2}} \right)$ and $\mathscr{C}^{\left( -1 \right)} \left( { \frac{\pi }{2}} \right)$, the adjacent contours to the saddle point $t ^{\left( 0 \right)}$. For $N\geq 2$ and fixed $z$, the function under the contour integral in \eqref{eq14} is an analytic function of $u$ in the domain $\Delta$ (and in the right-half plane) excluding $\mathscr{P}\left(\theta\right)$, therefore we can deform $\Gamma$ over the adjacent contours. We thus find that for $-\frac{3\pi}{2} < \theta < \frac{3\pi}{2}$ and $N \geq 2$, \eqref{eq14} may be written
\begin{gather}\label{eq15}
\begin{split}
R_N \left( z \right) = \; & \frac{1}{{\sqrt {2\pi } z^{\frac{N}{2}} }}\int_0^{ + \infty } {s^{\frac{{N - 1}}{2}} e^{ - s} \frac{1}{{2\pi i}}\int_{\mathscr{C} ^{\left( 1 \right)} \left( { -\frac{\pi }{2}} \right)}  {\frac{{f^{ - \frac{{N + 1}}{2}} \left( u \right)}}{{1 - \left( {s/z} \right)^{\frac{1}{2}} f^{ - \frac{1}{2}} \left( u \right)}}du} ds}\\ & + \frac{1}{{\sqrt {2\pi } z^{\frac{N}{2}} }}\int_0^{ + \infty } {s^{\frac{{N - 1}}{2}} e^{ - s} \frac{1}{{2\pi i}}\int_{\mathscr{C} ^{\left( -1 \right)} \left( { \frac{\pi }{2}} \right)}  {\frac{{f^{ - \frac{{N + 1}}{2}} \left( u \right)}}{{1 - \left( {s/z} \right)^{\frac{1}{2}} f^{ - \frac{1}{2}} \left( u \right)}}du} ds}.
\end{split}
\end{gather}
Now we perform the changes of variable
\[
s = t\frac{{\left| {f\left( {2\pi i} \right)} \right|}}{{f\left( {2\pi i} \right)}}f\left( u \right) = itf\left( u \right)
\]
in the first, and
\[
s = t\frac{{\left| {f\left( { - 2\pi i} \right)} \right|}}{{f\left( { - 2\pi i} \right)}}f\left( u \right) =  - itf\left( u \right)
\]
in the second double integral. By the definition of the steepest descent paths, $t$ is positive. In this case, Dingle's singulants are $f\left( {\pm 2\pi i} \right) = \mp 2\pi i$. When using these changes of variable, we should take $i^{\frac{1}{2}}  = e^{ - \frac{3\pi i}{4}}$ in the first, and $\left( { - i} \right)^{\frac{1}{2}}  = e^{\frac{3\pi i}{4}}$ in the second double integral. With these changes of variable, the representation \eqref{eq15} for $R_N \left( z \right)$ becomes
\begin{gather}\label{eq16}
\begin{split}
R_N \left( z \right) = \; & \frac{{e^{ - \frac{{3\pi }}{4}\left( {N + 1} \right)i} }}{{\sqrt {2\pi } z^{\frac{N}{2}} }}\int_0^{ + \infty } {\frac{{t^{\frac{{N - 1}}{2}} }}{{1 - e^{ - \frac{{3\pi i}}{4}} \left( {t/z} \right)^{\frac{1}{2}} }}\frac{1}{{2\pi i}}\int_{\mathscr{C}^{\left( 1 \right)} \left( { - \frac{\pi }{2}} \right)} {e^{ - itf\left( u \right)} du} dt} 
\\ & + \frac{{e^{\frac{{3\pi }}{4}\left( {N + 1} \right)i} }}{{\sqrt {2\pi } z^{\frac{N}{2}} }}\int_0^{ + \infty } {\frac{{t^{\frac{{N - 1}}{2}} }}{{1 - e^{\frac{{3\pi i}}{4}} \left( {t/z} \right)^{\frac{1}{2}} }}\frac{1}{{2\pi i}}\int_{\mathscr{C}^{\left( { - 1} \right)} \left( {\frac{\pi }{2}} \right)} {e^{itf\left( u \right)} du} dt} ,
\end{split}
\end{gather}
for $- \frac{3\pi}{2} < \theta  < \frac{3\pi}{2}$ and $N \geq 2$. The contour integrals can themselves be represented in terms of the scaled gamma function since
\begin{multline*}
\int_{\mathscr{C}^{\left( 1 \right)} \left( { - \frac{\pi }{2}} \right)} {e^{ - itf\left( u \right)} du}  = - \int_{\mathscr{C}^{\left( 0 \right)} \left( { - \frac{\pi }{2}} \right)} {e^{ - itf\left( {u + 2\pi i} \right)}  du} \\  = - e^{ - 2\pi t} \int_{\mathscr{C}^{\left( 0 \right)} \left( { - \frac{\pi }{2}} \right)} {e^{ - itf\left( u \right)} du}  = - e^{ - 2\pi t} \sqrt {\frac{{2\pi }}{{it}}} \Gamma^\ast \left( {it} \right),
\end{multline*}
and
\begin{multline*}
\int_{\mathscr{C}^{\left( { - 1} \right)} \left( {\frac{\pi }{2}} \right)} {e^{itf\left( u \right)} du}  = \int_{\mathscr{C}^{\left( 0 \right)} \left( {\frac{\pi }{2}} \right)} {e^{itf\left( {u - 2\pi i} \right)} du} \\ = e^{ - 2\pi t} \int_{\mathscr{C}^{\left( 0 \right)} \left( {\frac{\pi }{2}} \right)} {e^{itf\left( u \right)} du}  = e^{ - 2\pi t} \sqrt {\frac{{2\pi }}{{ - it}}} \Gamma^\ast \left( { - it} \right),
\end{multline*}
where we should take $i^{\frac{1}{2}}  = e^{\frac{\pi}{4} i}$ and $\left( { - i} \right)^{\frac{1}{2}}  = e^{-\frac{\pi}{4} i}$. Substituting these into \eqref{eq16} gives \eqref{eq17}. To prove the second representation in \eqref{eq18}, we apply \eqref{eq17} for the right-hand side of
\[
a_n  = z^{\frac{{n + 1}}{2}} \left( {R_n \left( z \right) - R_{n + 1} \left( z \right)} \right).
\]

\section{Error bounds for the asymptotic expansions of\\ the incomplete gamma function}\label{section3}

\subsection{Case (i): $\lambda>1$} Our error analysis is based on the representation \eqref{eq6}. Set $u=x+iy$. Along the path $\mathscr{C}^{\left( 0 \right)} \left( \pi  \right)$, we have 
\[
\arg \left( {f\left( { - \log \lambda ,\lambda } \right) - f\left( {u,\lambda } \right)} \right) = 0,
\]
which implies that
\[
f\left( {u,\lambda } \right) = \Re \left( {f\left( {u,\lambda } \right)} \right) = \lambda e^x \cos y - x - \lambda .
\]
Therefore, along $\mathscr{C}^{\left( 0 \right)} \left( \pi  \right)$, we have $f\left( {x + iy,\lambda } \right) = f\left( {x - iy,\lambda } \right)$. Denote by $\mathscr{C}_{\pm}^{\left( 0 \right)} \left( \pi  \right)$ the portions of the steepest path $\mathscr{C}^{\left( 0 \right)} \left( \pi  \right)$
that lie in the upper- and in the lower-half plane, respectively. Using the previous observation and the fact that $\mathscr{C}^{\left( 0 \right)} \left( \pi  \right)$ is symmetric with respect to the real axis (see Figure \ref{fig2}), we find
\begin{align*}
& \int_{\mathscr{C}^{\left( 0 \right)} \left( \pi  \right)} {\frac{{f^{ - N - 1} \left( {u,\lambda } \right)}}{{1 - \left( {s/a} \right)f^{ - 1} \left( {u,\lambda } \right)}}du} 
\\ & = \int_{\mathscr{C}_ + ^{\left( 0 \right)} \left( \pi  \right)} {\frac{{f^{ - N - 1} \left( {x + iy,\lambda } \right)}}{{1 - \left( {s/a} \right)f^{ - 1} \left( {x + iy,\lambda } \right)}}d\left( {x + iy} \right)}  + \int_{\mathscr{C}_ - ^{\left( 0 \right)} \left( \pi  \right)} {\frac{{f^{ - N - 1} \left( {x + iy,\lambda } \right)}}{{1 - \left( {s/a} \right)f^{ - 1} \left( {x + iy,\lambda } \right)}}d\left( {x + iy} \right)} 
\\ & =  - \int_{\mathscr{C}_ - ^{\left( 0 \right)} \left( \pi  \right)} {\frac{{f^{ - N - 1} \left( {x - iy,\lambda } \right)}}{{1 - \left( {s/a} \right)f^{ - 1} \left( {x - iy,\lambda } \right)}}d\left( {x - iy} \right)}  + \int_{\mathscr{C}_ - ^{\left( 0 \right)} \left( \pi  \right)} {\frac{{f^{ - N - 1} \left( {x + iy,\lambda } \right)}}{{1 - \left( {s/a} \right)f^{ - 1} \left( {x + iy,\lambda } \right)}}d\left( {x + iy} \right)} 
\\ & =  - \int_{\mathscr{C}_ - ^{\left( 0 \right)} \left( \pi  \right)} {\frac{{f^{ - N - 1} \left( {x + iy,\lambda } \right)}}{{1 - \left( {s/a} \right)f^{ - 1} \left( {x + iy,\lambda } \right)}}d\left( {x - iy} \right)}  + \int_{\mathscr{C}_ - ^{\left( 0 \right)} \left( \pi  \right)} {\frac{{f^{ - N - 1} \left( {x + iy,\lambda } \right)}}{{1 - \left( {s/a} \right)f^{ - 1} \left( {x + iy,\lambda } \right)}}d\left( {x + iy} \right)} 
\\ & =  - 2i\int_{\mathscr{C}_ - ^{\left( 0 \right)} \left( \pi  \right)} {\frac{{f^{ - N - 1} \left( {u,\lambda } \right)}}{{1 - \left( {s/a} \right)f^{ - 1} \left( {u,\lambda } \right)}}d\left( { - y} \right)} .
\end{align*}
Substitution into \eqref{eq6} then gives
\begin{equation}\label{eq20}
R_N \left( {a,\lambda } \right) = \frac{{\left( { - a} \right)^N }}{{\left( {z - a} \right)^{2N + 1} }}\left( { - 1} \right)^{N+1} \left( {\lambda  - 1} \right)^{2N + 1} \int_0^{ + \infty } {s^N e^{ - s} \frac{1}{\pi }\int_{\mathscr{C}_ - ^{\left( 0 \right)} \left( \pi  \right)} {\frac{{f^{ - N - 1} \left( {u,\lambda } \right)}}{{1 - \left( {s/a} \right)f^{ - 1} \left( {u,\lambda } \right)}}d\left( { - y} \right)} ds} .
\end{equation}
By substituting this expression into the right-hand side of \eqref{eq19}, we also have
\begin{equation}\label{eq21}
b_N \left( \lambda  \right) = \left( { - 1} \right)^{N+1} \left( {\lambda  - 1} \right)^{2N + 1} \int_0^{ + \infty } {s^N e^{ - s} \frac{1}{\pi }\int_{\mathscr{C}_ - ^{\left( 0 \right)} \left( \pi  \right)} {f^{ - N - 1} \left( {u,\lambda } \right)d\left( { - y} \right)} ds} .
\end{equation}
Noting that $f\left( {u,\lambda } \right)$ is negative and $-y$ is positive and monotonically increasing along $\mathscr{C}_ - ^{\left( 0 \right)} \left( \pi  \right)$, and that for $s>0$ and $u\in \mathscr{C}_ - ^{\left( 0 \right)} \left( \pi  \right)$,
\[
\frac{1}{{\left| {1 - \left( {s/a} \right)f^{ - 1} \left( {u,\lambda } \right)} \right|}} = \frac{1}{{\left| {1 + \left( {s/\left| a \right|} \right)\left| {f^{ - 1} \left( {u,\lambda } \right)} \right|e^{ - i\theta } } \right|}} \le \begin{cases} \left|\csc \theta \right| & \; \text{ if } \; \frac{\pi}{2} < \left|\theta\right| <\pi, \\ 1 & \; \text{ if } \; \left|\theta\right| \leq \frac{\pi}{2}, \end{cases}
\]
trivial estimation of \eqref{eq20} and the expression \eqref{eq21} yield the error bound
\[
\left| {R_N \left( {a,\lambda } \right)} \right| \le \left| {\frac{{\left( { - a} \right)^N b_N \left( \lambda  \right)}}{{\left( {z - a} \right)^{2N + 1} }}} \right| \begin{cases} \left|\csc \theta \right| & \; \text{ if } \; \frac{\pi}{2} < \left|\theta\right| <\pi, \\ 1 & \; \text{ if } \; \left|\theta\right| \leq \frac{\pi}{2}. \end{cases}
\]
Here, and subsequently we write $\arg a = \theta$.  In addition, if $a > 0$ we have $0 < 1/\left( {1 - \left( {s/a} \right)f^{ - 1} \left( {u,\lambda } \right)} \right) < 1$ in \eqref{eq20}, and the mean value theorem of integration shows that
\[
R_N \left( {a,\lambda } \right) = \frac{{\left( { - a} \right)^N b_N \left( \lambda  \right)}}{{\left( {z - a} \right)^{2N + 1} }}\Theta ,
\]
where $0<\Theta<1$ is a suitable number depending on $a$, $\lambda$ and $N$.

Our bound for $R_N \left( {a,\lambda } \right)$ is unrealistic near the Stokes lines $\theta = \pm \pi$ due to the presence of the factor $\csc \theta$. To derive an estimate which is realistic near and behind these lines, we proceed as follows. Let $0<\varphi<\frac{\pi}{2}$ be an acute angle that may depend on $N$ and suppose that $\frac{\pi}{2}+\varphi<\theta<\pi+\varphi$. We rotate the path of integration in \eqref{eq20} by $\varphi$, to obtain the analytic continuation of the representation $R_N \left( {a,\lambda } \right)$ to the sector $\frac{\pi}{2}+\varphi<\theta<\pi+\varphi$:
\begin{equation}\label{eq23}
R_N \left( {a,\lambda } \right) = \frac{{\left( { - a} \right)^N }}{{\left( {z - a} \right)^{2N + 1} }}\left( { - 1} \right)^{N + 1} \left( {\lambda  - 1} \right)^{2N + 1} \int_0^{ + \infty e^{i\varphi } } {s^N e^{ - s} \frac{1}{\pi }\int_{\mathscr{C}_ - ^{\left( 0 \right)} \left( \pi  \right)} {\frac{{f^{ - N - 1} \left( {u,\lambda } \right)}}{{1 - \left( {s/a} \right)f^{ - 1} \left( {u,\lambda } \right)}}d\left( { - y} \right)} ds} .
\end{equation}
With this representation, the expansion \eqref{eq22} is well defined in the region $\frac{\pi}{2}+\varphi<\theta<\pi+\varphi$. Simple estimation of \eqref{eq23} shows that
\begin{equation}\label{eq24}
\left| {R_N \left( {a,\lambda } \right)} \right| \le \frac{{\csc \left( {\theta  - \varphi } \right)}}{{\cos ^{N + 1} \varphi }}\left| {\frac{{\left( { - a} \right)^N b_N \left( \lambda  \right)}}{{\left( {z - a} \right)^{2N + 1} }}} \right|.
\end{equation}
The minimisation of the factor $\csc \left( {\theta  - \varphi } \right)\cos ^{ - N - 1} \varphi$ as a function of $\varphi$ can be done using a lemma of Meijer \cite[pp. 953--954]{Meijer}. In our case, Meijer's lemma gives that the minimising value $\varphi = \varphi^\ast$ in \eqref{eq24}, is the unique solution of the equation
\begin{equation}\label{eq25}
\left( {N + 2} \right)\cos \left( {\theta  - 2\varphi^\ast } \right) = N\cos \theta 
\end{equation}
that satisfies $-\pi+\theta< \varphi^\ast <\frac{\pi}{2}$ if $\pi \leq \theta < \frac{3\pi}{2}$, and $0< \varphi^\ast <-\frac{\pi}{2}+\theta$ if $\frac{\pi}{2} < \theta < \pi$. With this choice of $\varphi$, \eqref{eq24} provides an error bound for the range $\frac{\pi}{2}<\theta<\frac{3\pi}{2}$.

To obtain the bound for the sector $-\frac{3\pi}{2}<\theta<-\frac{\pi}{2}$, we rotate the path of integration in \eqref{eq20} by $-\frac{\pi}{2}<\varphi<0$ and an argument similar to the above shows that
\[
\left| {R_N \left( {a,\lambda } \right)} \right| \le -\frac{{\csc \left( {\theta  - \varphi^\ast } \right)}}{{\cos ^{N + 1} \varphi^\ast }}\left| {\frac{{\left( { - a} \right)^N b_N \left( \lambda  \right)}}{{\left( {z - a} \right)^{2N + 1} }}} \right|,
\]
where $\varphi^\ast$ is the unique solution of the equation \eqref{eq25}, that satisfies $-\frac{\pi}{2}<\varphi^\ast<\pi + \theta$ if $-\frac{3\pi}{2}<\theta\leq -\pi$, and $\theta+\frac{\pi}{2}<\varphi^\ast<0$ if $-\pi<\theta<-\frac{\pi}{2}$.

We can make our bounds simpler if $\arg a$ is close to $\pm \pi$ (i.e., close to the Stokes lines) as follows. When $\arg a= \pi$, the
minimising value $\varphi^\ast$ is given explicitly by
\[
\varphi^\ast = \arctan \left( {\frac{1}{{\sqrt {N + 1} }}} \right),
\]
and therefore we have
\[
\frac{{\csc \left( {\theta  - \varphi^\ast } \right)}}{{\cos ^{N + 1} \varphi^\ast}} \le \frac{{\csc \left( { \pi  - \varphi^\ast } \right)}}{{\cos ^{N + 1} \varphi^\ast }} = \left( {1 + \frac{1}{{N + 1}}} \right)^{\frac{N}{2} + 1} \sqrt {N + 1}  \le \sqrt {e\left( {N + \frac{3}{2}} \right)} ,
\]
as long as $\frac{\pi }{2} + \varphi^\ast < \theta  \le \pi$. We also have
\[
\sqrt {e\left( {N + \frac{3}{2}} \right)} \ge \sqrt {\frac{{3e}}{2}}  \ge \csc \theta ,
\]
for $\frac{\pi}{2} < \theta  \le \frac{\pi }{2} + \varphi^\ast \le \frac{\pi }{2} + \arctan \left( {\frac{1}{{\sqrt 2 }}} \right)$, therefore
\begin{equation}\label{eq98}
\left| {R_N \left( {a,\lambda } \right)} \right| \le \sqrt {e\left( {N + \frac{3}{2}} \right)} \left| {\frac{{\left( { - a} \right)^N b_N \left( \lambda  \right)}}{{\left( {z - a} \right)^{2N + 1} }}} \right|,
\end{equation}
provided that $\frac{\pi}{2} < \theta \leq \pi$. A similar argument shows that this bound is also true when $-\pi \leq \theta < - \frac{\pi}{2}$. The appearance of the factor proportional to $\sqrt{N}$ in this bound may give the impression that this estimate is unrealistic for large $N$, but this is not the case. Applying the asymptotic form of the coefficients $b_N\left(\lambda\right)$ (see Section \ref{section4}), it can readily be shown that \eqref{eq98} implies
\[
R_N \left( {a,\lambda } \right) = \mathcal{O}\left( {\frac{{\sqrt N \Gamma \left( {N + \frac{1}{2}} \right)}}{{\left|a\right|^{N + 1} \left( {\lambda  - \log \lambda  - 1} \right)^{N + \frac{1}{2}} }}} \right)
\]
for large $N$. When the asymptotic series is truncated optimally, i.e., $N \approx \left|a\right|\left( {\lambda  - \log \lambda  - 1} \right)$, we find with the help of Stirling's formula that the above estimate is equivalent to
\[
R_N \left( {a,\lambda } \right) = \mathcal{O}\left( {\frac{{e^{ - \left| a \right|\left( {\lambda  - \log \lambda  - 1} \right)} }}{{\left| a \right|^{\frac{1}{2}} }}} \right).
\]
Noting that $R_N \left( {a,\lambda } \right) = R_{N,0} \left( {a,\lambda } \right)$, this estimate is a special case of the exponentially improved version, given in Theorem \ref{thm3}. Therefore, our bound is indeed realistic near the Stokes lines.

\subsection{Case (ii): $\lambda=1$} In this case, the argument of the previous subsection does not work, since for $u=x+iy \in \mathscr{C}^{\left( { \pm 1} \right)} \left( { \mp \frac{\pi }{2}} \right)$, $dx$ would appear in the analysis but $x$ is not monotonic along the steepest paths. Hence, we have to proceed in a different way. First we consider the range $\left|\arg z\right| \leq \pi$. From the reflection principle, it follows that $\Gamma^\ast \left( {\overline w} \right) = \overline {\Gamma^\ast \left( w \right)} $, and in particular $\Gamma^\ast \left( { \pm it} \right) = \Re \Gamma^\ast  \left( {it} \right) \pm \Im \Gamma^\ast  \left( {it} \right)$ for $t>0$. Substituting this expression into \eqref{eq17} gives
\begin{gather}\label{eq37}
\begin{split}
R_{4N} \left( z \right) = \; &  \frac{{\left( { - 1} \right)^{N+1} }}{{\pi z^{2N} }}\int_0^{ + \infty } {\frac{{t^{2N - 1} e^{ - 2\pi t} }}{{\left( {1 - e^{ - \frac{{3\pi i}}{4}} \left( {t/z} \right)^{\frac{1}{2}} } \right)\left( {1 - e^{\frac{{3\pi i}}{4}} \left( {t/z} \right)^{\frac{1}{2}} } \right)}}\left(-\Im \Gamma^\ast\left( {it} \right)\right)dt} \\ & + \frac{{\left( { - 1} \right)^{N + 1} }}{{\sqrt 2 \pi z^{2N + \frac{1}{2}} }}\int_0^{ + \infty } {\frac{{t^{2N - \frac{1}{2}} e^{ - 2\pi t} }}{{\left( {1 - e^{ - \frac{{3\pi i}}{4}} \left( {t/z} \right)^{\frac{1}{2}} } \right)\left( {1 - e^{\frac{{3\pi i}}{4}} \left( {t/z} \right)^{\frac{1}{2}} } \right)}}\left(\Re \Gamma^\ast  \left( {it} \right) -\Im \Gamma^\ast \left( {it} \right)\right)dt},
\end{split}
\end{gather}
\begin{gather}\label{eq27}
\begin{split}
R_{4N + 1} \left( z \right) = \; & \frac{{\left( { - 1} \right)^{N + 1} }}{{\sqrt 2 \pi z^{2N + \frac{1}{2}} }}\int_0^{ + \infty } {\frac{{t^{2N - \frac{1}{2}} e^{ - 2\pi t} }}{{\left( {1 - e^{ - \frac{{3\pi i}}{4}} \left( {t/z} \right)^{\frac{1}{2}} } \right)\left( {1 - e^{\frac{{3\pi i}}{4}} \left( {t/z} \right)^{\frac{1}{2}} } \right)}}\left(\Re \Gamma^\ast \left( {it} \right) +\Im \Gamma^\ast \left( {it} \right)\right)dt} 
\\ & + \frac{{\left( { - 1} \right)^{N} }}{{\pi z^{2N + 1} }}\int_0^{ + \infty } {\frac{{t^{2N} e^{ - 2\pi t} }}{{\left( {1 - e^{ - \frac{{3\pi i}}{4}} \left( {t/z} \right)^{\frac{1}{2}} } \right)\left( {1 - e^{\frac{{3\pi i}}{4}} \left( {t/z} \right)^{\frac{1}{2}} } \right)}}\left(-\Im \Gamma^\ast \left( {it} \right)\right)dt} ,
\end{split}
\end{gather}
\begin{gather}\label{eq33}
\begin{split}
R_{4N + 2} \left( z \right) = \; & \frac{{\left( { - 1} \right)^N }}{{\pi z^{2N + 1} }}\int_0^{ + \infty } {\frac{{t^{2N} e^{ - 2\pi t} }}{{\left( {1 - e^{ - \frac{{3\pi i}}{4}} \left( {t/z} \right)^{\frac{1}{2}} } \right)\left( {1 - e^{\frac{{3\pi i}}{4}} \left( {t/z} \right)^{\frac{1}{2}} } \right)}}\Re \Gamma^\ast \left( {it} \right)dt}
\\ & + \frac{{\left( { - 1} \right)^N }}{{\sqrt 2 \pi z^{2N + \frac{3}{2}} }}\int_0^{ + \infty } {\frac{{t^{2N + \frac{1}{2}} e^{ - 2\pi t} }}{{\left( {1 - e^{ - \frac{{3\pi i}}{4}} \left( {t/z} \right)^{\frac{1}{2}} } \right)\left( {1 - e^{\frac{{3\pi i}}{4}} \left( {t/z} \right)^{\frac{1}{2}} } \right)}}\left(\Re \Gamma^\ast \left( {it} \right) + \Im \Gamma^\ast \left( {it} \right) \right)dt}
\end{split}
\end{gather}
and
\begin{gather}\label{eq38}
\begin{split}
R_{4N + 3} \left( z \right) = \; & \frac{{\left( { - 1} \right)^{N + 1} }}{{\sqrt 2 \pi z^{2N + \frac{3}{2}} }}\int_0^{ + \infty } {\frac{{t^{2N + \frac{1}{2}} e^{ - 2\pi t} }}{{\left( {1 - e^{ - \frac{{3\pi i}}{4}} \left( {t/z} \right)^{\frac{1}{2}} } \right)\left( {1 - e^{\frac{{3\pi i}}{4}} \left( {t/z} \right)^{\frac{1}{2}} } \right)}}\left( {\Re \Gamma^\ast \left( {it} \right) - \Im \Gamma^\ast \left( {it} \right)} \right)dt} 
\\ & + \frac{{\left( { - 1} \right)^{N + 1} }}{{\pi z^{2N + 2} }}\int_0^{ + \infty } {\frac{{t^{2N + 1} e^{ - 2\pi t} }}{{\left( {1 - e^{ - \frac{{3\pi i}}{4}} \left( {t/z} \right)^{\frac{1}{2}} } \right)\left( {1 - e^{\frac{{3\pi i}}{4}} \left( {t/z} \right)^{\frac{1}{2}} } \right)}}\Re \Gamma^\ast \left( {it} \right)dt} ,
\end{split}
\end{gather}
Here, and throughout this subsection, we assume that the indices are at least $2$. Similar manipulation of the second representation in \eqref{eq18} yields the formulas
\begin{align}
a_{4N} & = \frac{{\left( { - 1} \right)^{N + 1} }}{\pi }\int_0^{ + \infty } {t^{2N - 1} e^{ - 2\pi t} \left( { - \Im \Gamma^\ast \left( {it} \right)} \right)dt}, \nonumber
\\ a_{4N + 1} & = \frac{{\left( { - 1} \right)^{N + 1} }}{{\sqrt 2 \pi }}\int_0^{ + \infty } {t^{2N - \frac{1}{2}} e^{ - 2\pi t} \left( {\Re \Gamma^\ast \left( {it} \right) + \Im \Gamma^\ast \left( {it} \right)} \right)dt} , \label{eq28}
\\ a_{4N + 2} & = \frac{{\left( { - 1} \right)^N }}{\pi }\int_0^{ + \infty } {t^{2N} e^{ - 2\pi t} \Re \Gamma^\ast \left( {it} \right)dt} , \label{eq29}
\\ a_{4N + 3} & = \frac{{\left( { - 1} \right)^{N + 1} }}{{\sqrt 2 \pi }}\int_0^{ + \infty } {t^{2N + \frac{1}{2}} e^{ - 2\pi t} \left( {\Re \Gamma^\ast \left( {it} \right) - \Im \Gamma^\ast \left( {it} \right)} \right)dt} . \label{eq34}
\end{align}
To proceed further, we need the following lemma.

\begin{lemma}\label{lemma1} For any $t>0$, the quantities $\Re \Gamma^\ast \left( {it} \right)$, $- \Im \Gamma^\ast \left( {it} \right)$, $\Re \Gamma^\ast \left( {it} \right) + \Im \Gamma^\ast  \left( {it} \right)$, $\Re \Gamma^\ast \left( {it} \right) - \Im \Gamma^\ast \left( {it} \right)$ are all non-negative.
\end{lemma}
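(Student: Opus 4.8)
The plan is to locate the point $\Gamma^\ast(it)$ in the complex plane: I claim that for every $t>0$ one has $-\tfrac{\pi}{4}<\arg\Gamma^\ast(it)<0$. Once this is known, the four quantities in the statement are simply the assertions that a complex number $w$ with $-\tfrac{\pi}{4}\le\arg w\le 0$ has non-negative real part, non-positive imaginary part, non-negative $\Re w+\Im w$, and non-negative $\Re w-\Im w$; so it suffices to establish that two-sided bound on $\arg\Gamma^\ast(it)$.

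The starting point is Binet's second formula for the logarithm of the gamma function \cite[5.9.E16]{NIST}, which in terms of the scaled gamma function reads
\[
\log\Gamma^\ast(z)=2\int_0^{+\infty}\frac{\arctan(s/z)}{e^{2\pi s}-1}\,ds,\qquad\Re(z)>0,
\]
where $\log\Gamma^\ast$ denotes the branch that vanishes on the positive real axis. Taking imaginary parts and using $\Im\arctan w=-\tfrac12\log\bigl|(1+iw)/(1-iw)\bigr|$ with $w=s/z$ (legitimate, since $s/z$ is never purely imaginary when $\Re(z)>0$, hence never on a branch cut of $\arctan$) yields
\[
\arg\Gamma^\ast(z)=-\int_0^{+\infty}\frac{1}{e^{2\pi s}-1}\log\left|\frac{z+is}{z-is}\right|ds,\qquad\Re(z)>0.
\]
Since $\Gamma^\ast$ is analytic and non-vanishing for $\left|\arg z\right|<\pi$, the left-hand side is continuous up to the imaginary axis; letting $z=x+it$ with $x\to0+$, and justifying the passage to the limit on the right by dominated convergence (the only singularity of the integrand, at $s=t$, is logarithmic and is dominated uniformly for $0<x<1$), I obtain
\[
\arg\Gamma^\ast(it)=-\int_0^{+\infty}\frac{1}{e^{2\pi s}-1}\log\left|\frac{t+s}{t-s}\right|ds=:-I(t),\qquad t>0 .
\]

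Next I would bound $I(t)$ from below and above. Because $\left|t+s\right|>\left|t-s\right|$ for all $s,t>0$, the integrand is positive, so $I(t)>0$. For the upper bound, substitute $s=tv$ to write $I(t)=\int_0^{+\infty}\frac{t}{e^{2\pi tv}-1}\log\bigl|(1+v)/(1-v)\bigr|\,dv$, and use $e^{2\pi tv}-1>2\pi tv$ to get $\frac{t}{e^{2\pi tv}-1}<\frac{1}{2\pi v}$; hence
\[
I(t)<\frac{1}{2\pi}\int_0^{+\infty}\frac{1}{v}\log\left|\frac{1+v}{1-v}\right|dv=\frac{1}{2\pi}\cdot\frac{\pi^2}{2}=\frac{\pi}{4},
\]
the value of the last integral being classical (expand $\log\frac{1+v}{1-v}=2\sum_{k\ge0}v^{2k+1}/(2k+1)$ on $(0,1)$, substitute $v\mapsto1/v$ on $(1,+\infty)$, and use $\sum_{k\ge0}(2k+1)^{-2}=\pi^2/8$). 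Thus $-\tfrac{\pi}{4}<\arg\Gamma^\ast(it)<0$. Writing $\Gamma^\ast(it)=\rho e^{i\alpha}$ with $\rho>0$ and $\alpha\in(-\tfrac{\pi}{4},0)$, we have $\Re\Gamma^\ast(it)=\rho\cos\alpha$, $-\Im\Gamma^\ast(it)=-\rho\sin\alpha$, and $\Re\Gamma^\ast(it)\pm\Im\Gamma^\ast(it)=\sqrt2\,\rho\cos\bigl(\alpha\mp\tfrac{\pi}{4}\bigr)$; since each of $\alpha,\ \alpha-\tfrac{\pi}{4},\ \alpha+\tfrac{\pi}{4}$ lies in $\bigl(-\tfrac{\pi}{2},\tfrac{\pi}{2}\bigr)$, where the cosine is positive, and since $\sin\alpha<0$, all four quantities are in fact strictly positive, which proves the lemma.

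The only point that genuinely requires care is the transition from $\Re(z)>0$, where Binet's formula is available, to the boundary ray $z=it$; this is what the continuity of $\arg\Gamma^\ast$ together with the dominated-convergence argument for the limiting integral is for. Everything else — the inequality $e^x>1+x$ and the evaluation of the classical logarithmic integral — is routine.
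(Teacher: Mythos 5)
Your proof is correct, but it follows a genuinely different route from the paper's. Both arguments ultimately rest on the same geometric fact, namely that $-\frac{\pi}{4} < \arg \Gamma^\ast\left( it \right) < 0$ for $t>0$, from which all four inequalities follow at once (and, in your version, strictly). The paper, however, does not establish this from scratch: it imports from \cite{Nemes} the non-negativity of $\Re \Gamma^\ast\left( it \right)$ and $-\Im \Gamma^\ast\left( it \right)$, which disposes of three of the four quantities, and then treats $\Re \Gamma^\ast\left( it \right) + \Im \Gamma^\ast\left( it \right)$ by writing $\arg \Gamma^\ast\left( it \right) = -\int_0^{+\infty} \frac{2ts}{\left( t^2+s^2 \right)^2} Q\left( s \right) ds$ with $2Q\left( s \right) = s - \left\lfloor s \right\rfloor - \left( s - \left\lfloor s \right\rfloor \right)^2$, invoking a two-sided bound (again from \cite{Nemes}) whose right-hand side is an explicit elementary function that must be shown to decrease from $\frac{\pi}{4}$ to $0$. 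You instead start from Binet's second ($\arctan$) formula, arrive at $\arg \Gamma^\ast\left( it \right) = -\int_0^{+\infty} \left( e^{2\pi s} - 1 \right)^{-1} \log\left| \left( t+s \right)/\left( t-s \right) \right| ds$, and bound this self-containedly: positivity of the integrand gives one side, and $e^x - 1 > x$ combined with $\int_0^{+\infty} v^{-1} \log\left| \left( 1+v \right)/\left( 1-v \right) \right| dv = \frac{\pi^2}{2}$ gives the sharp constant $\frac{\pi}{4}$ on the other. What your approach buys is independence from the companion paper and a cleaner derivation of the key phase bound; what the paper's buys is brevity, since most of the work is already available by citation. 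I checked the delicate points of your write-up --- the identity $\Im \arctan w = -\frac{1}{2}\log\left| \left( 1+iw \right)/\left( 1-iw \right) \right|$ and the branch bookkeeping, the passage to the boundary ray $z=it$ (indeed, since $x \mapsto \left( x^2 + A \right)/\left( x^2 + B \right)$ is decreasing for $A \ge B$, the integrand is dominated by its own limit, so the limit interchange is legitimate), and the evaluation of the logarithmic integral --- and they are all sound.
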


\begin{proof} It was shown in \cite{Nemes} that the quantities $\Re \Gamma^\ast \left( {it} \right)$, $- \Im \Gamma^\ast \left( {it} \right)$ are non-negative if $t>0$. This implies that $\Re \Gamma^\ast \left( {it} \right) - \Im \Gamma^\ast \left( {it} \right)>0$ if $t>0$. It remains to show that $\Re \Gamma^\ast \left( {it} \right) + \Im \Gamma^\ast  \left( {it} \right)$ is non-negative if $t>0$. Let $2Q\left( s \right) = s - \left\lfloor s \right\rfloor  - \left( {s - \left\lfloor s \right\rfloor } \right)^2$. In the paper \cite{Nemes} it was shown that
\[
\Re \Gamma^\ast \left( {it} \right) = \left| {\Gamma^\ast \left( {it} \right)} \right|\cos \left( {\int_0^{ + \infty } {\frac{{2ts}}{{\left( {t^2  + s^2 } \right)^2 }}Q\left( s \right)ds} } \right),
\]
\[
\Im \Gamma^\ast \left( {it} \right) =  - \left| {\Gamma^\ast \left( {it} \right)} \right|\sin \left( {\int_0^{ + \infty } {\frac{{2ts}}{{\left( {t^2  + s^2 } \right)^2 }}Q\left( s \right)ds} } \right),
\]
whence
\begin{equation}\label{eq26}
\Re \Gamma^\ast \left( {it} \right) + \Im \Gamma^\ast \left( {it} \right) = \sqrt 2 \left| {\Gamma^\ast \left( {it} \right)} \right|\cos \left( {\frac{\pi }{4} + \int_0^{ + \infty } {\frac{{2ts}}{{\left( {t^2  + s^2 } \right)^2 }}Q\left( s \right)ds} } \right) .
\end{equation}
The following double inequality was proved in \cite{Nemes}
\[
0 \leq \int_0^{ + \infty } {\frac{{2ts}}{{\left( {t^2  + s^2 } \right)^2 }}Q\left( s \right)ds} \leq \frac{t}{2}\log \left( {\frac{{t^2 }}{{t^2  + 1}}} \right) + \frac{1}{2}\arctan \left( {\frac{1}{t}} \right) + \frac{1}{8}\frac{t}{{t^2  + 1}},
\]
for $t>0$. Elementary analysis shows that the function on the right-hand side is a monotonically decreasing function of $t$ and its limits at $t=+0$ and $t=+\infty$ are $\frac{\pi}{4}$ and $0$, respectively. Therefore the argument of the cosine in \eqref{eq26} is always between $\frac{\pi}{4}$ and $\frac{\pi}{2}$, which completes the proof.
\end{proof}

To derive our bounds we need some inequalities. For $r>0$ and $\left|\vartheta\right|\leq \pi$, it holds that
\begin{equation}\label{eq30}
\frac{1}{\left| {\left( {1 - e^{ - \left( {\frac{{3\pi }}{4} + \frac{\vartheta }{2}} \right)i} r} \right)\left( {1 - e^{\left( {\frac{{3\pi }}{4} - \frac{\vartheta }{2}} \right)i} r} \right)} \right|}  \le 1.
\end{equation}
Indeed,
\[
\left| {\left( {1 - e^{ - \left( {\frac{{3\pi }}{4} + \frac{\vartheta }{2}} \right)i} r} \right)\left( {1 - e^{\left( {\frac{{3\pi }}{4} - \frac{\vartheta }{2}} \right)i} r} \right)} \right|^2  = r^4  + 2\sqrt 2 r^3 \cos \left( {\frac{\vartheta }{2}} \right) + 2r^2 \left( {1 + \cos \vartheta } \right) + 2\sqrt 2 r\cos \left( {\frac{\vartheta }{2}} \right) + 1 \ge 1.
\]
By Lemma \ref{lemma1}, we have the following double inequalities for $- \Im \Gamma^\ast \left( {it} \right)$ and $\Re \Gamma^\ast \left( {it} \right) + \Im \Gamma^\ast \left( {it} \right)$:
\begin{equation}\label{eq31}
0\leq - \Im \Gamma^\ast \left( {it} \right) = \Re \Gamma^\ast \left( {it} \right) - \left( {\Re \Gamma^\ast \left( {it} \right) + \Im \Gamma^\ast \left( {it} \right)} \right) \le \Re \Gamma^\ast \left( {it} \right),
\end{equation}
\begin{equation}\label{eq32}
0\leq \Re \Gamma^\ast \left( {it} \right) + \Im \Gamma^\ast \left( {it} \right) = \Re \Gamma^\ast  \left( {it} \right) - \Im \Gamma^\ast \left( {it} \right) - 2\left( { - \Im \Gamma^\ast \left( {it} \right)} \right) \le \Re \Gamma^\ast  \left( {it} \right) - \Im \Gamma^\ast  \left( {it} \right),
\end{equation}
provided that $t>0$. Applying the inequalities \eqref{eq30} and \eqref{eq31} for \eqref{eq27} together with Lemma \ref{lemma1} and the representations \eqref{eq28} and \eqref{eq29}, we deduce
\[
\left| {R_{4N + 1} \left( z \right)} \right| \le \frac{{\left| {a_{4N + 1} } \right|}}{{\left| z \right|^{2N + \frac{1}{2}} }} + \frac{{\left| {a_{4N + 2} } \right|}}{{\left| z \right|^{2N + 1} }},
\]
for $\left|\arg z\right|\leq \pi$. Similarly, employing the inequalities \eqref{eq30} and \eqref{eq32} for \eqref{eq33} together with Lemma \ref{lemma1} and the representations \eqref{eq29} and \eqref{eq34}, we obtain
\[
\left| {R_{4N + 2} \left( z \right)} \right| \le \frac{{\left| {a_{4N + 2} } \right|}}{{\left| z \right|^{2N + 1} }} + \frac{{\left| {a_{4N + 3} } \right|}}{{\left| z \right|^{2N + \frac{3}{2}} }},
\]
for $\left|\arg z\right|\leq \pi$. It is easy to see that for the cases of $R_{4N} \left( z \right)$ and $R_{4N + 3} \left( z \right)$, we cannot obtain an upper bound involving just the absolute value of the first two omitted terms. Instead we use the expressions
\begin{equation}\label{eq35}
R_{4N} \left( z \right) = \frac{{a_{4N} }}{{z^{2N} }} + R_{4N + 1} \left( z \right),\; R_{4N + 3} \left( z \right) = \frac{{a_{4N + 3} }}{{z^{2N + \frac{3}{2}} }} + \frac{{a_{4N + 4} }}{{z^{2N + 2} }} + R_{4N + 5} \left( z \right)
\end{equation}
and the bounds we have just obtained, to derive that
\[
\left| {R_{4N} \left( z \right)} \right| \le \frac{{\left| {a_{4N} } \right|}}{{\left| z \right|^{2N} }} + \frac{{\left| {a_{4N + 1} } \right|}}{{\left| z \right|^{2N + \frac{1}{2}} }} + \frac{{\left| {a_{4N + 2} } \right|}}{{\left| z \right|^{2N + 1} }}
\]
and
\[
\left| {R_{4N + 3} \left( z \right)} \right| \le \frac{{\left| {a_{4N + 3} } \right|}}{{\left| z \right|^{2N + \frac{3}{2}} }} + \frac{{\left| {a_{4N + 4} } \right|}}{{\left| z \right|^{2N + 2} }} + \frac{{\left| {a_{4N + 5} } \right|}}{{\left| z \right|^{2N + \frac{5}{2}} }} + \frac{{\left| {a_{4N + 6} } \right|}}{{\left| z \right|^{2N + 3} }},
\]
for $\left|\arg z\right|\leq \pi$.

Consider now the case when $z>0$. Note that in this situation, we have
\begin{equation}\label{eq36}
0 < \frac{1}{{\left( {1 - e^{ - \frac{{3\pi i}}{4}} \left( {t/z} \right)^{\frac{1}{2}} } \right)\left( {1 - e^{\frac{{3\pi i}}{4}} \left( {t/z} \right)^{\frac{1}{2}} } \right)}} = \frac{1}{{t/z + \left( {2t/z} \right)^{\frac{1}{2}}  + 1}} < 1.
\end{equation}
Therefore, by the mean value theorem of integration we find from \eqref{eq27}, \eqref{eq33}, \eqref{eq28}--\eqref{eq34}, \eqref{eq31}, \eqref{eq32}, Lemma \ref{lemma1} and \eqref{eq35} that
\[
\left( { - 1} \right)^{N + 1} R_{4N} \left( z \right) = \frac{{\left| {a_{4N} } \right|}}{{z^{2N} }} + \Xi _1 \frac{{\left| {a_{4N + 1} } \right|}}{{z^{2N + \frac{1}{2}} }} - \Xi _2 \frac{{\left| {a_{4N + 2} } \right|}}{{z^{2N + 1} }},
\]
\[
\left( { - 1} \right)^{N + 1} R_{4N + 1} \left( z \right) = \Xi _1 \frac{{\left| {a_{4N + 1} } \right|}}{{z^{2N + \frac{1}{2}} }} - \Xi _2 \frac{{\left| {a_{4N + 2} } \right|}}{{z^{2N + 1} }},
\]
\[
\left( { - 1} \right)^N R_{4N + 2} \left( z \right) = \Xi _3 \frac{{\left| {a_{4N + 2} } \right|}}{{z^{2N + 1} }} + \Xi _4 \frac{{\left| {a_{4N + 3} } \right|}}{{z^{2N + \frac{3}{2}} }}
\]
and
\[
\left( { - 1} \right)^{N + 1} R_{4N + 3} \left( z \right) = \frac{{\left| {a_{4N + 3} } \right|}}{{z^{2N + \frac{3}{2}} }} - \frac{{\left| {a_{4N + 4} } \right|}}{{z^{2N + 2} }} - \Xi _5 \frac{{\left| {a_{4N + 5} } \right|}}{{z^{2N + \frac{5}{2}} }} + \Xi _6 \frac{{\left| {a_{4N + 6} } \right|}}{{z^{2N + 3} }},
\]
for $z>0$. Here $0<\Xi_i<1$ ($i=1,2,\ldots,6$) is a suitable number depending on $N$. From the integral formulas \eqref{eq37}, \eqref{eq27} and \eqref{eq33}, Lemma \ref{lemma1} and the inequality \eqref{eq36}, it is seen that $\left( { - 1} \right)^{N + 1} R_{4N} \left( z \right)$, $\left( { - 1} \right)^N R_{4N + 2} \left( z \right)$, $\left( { - 1} \right)^{N + 1} R_{4N + 3} \left( z \right)$ are all positive if $z$ is positive. Therefore, using the previous estimates, we obtain the following double inequalities
\[
\max\left(0,\frac{{\left| {a_{4N} } \right|}}{{z^{2N} }} - \frac{{\left| {a_{4N + 2} } \right|}}{{z^{2N + 1} }} \right)< \left( { - 1} \right)^{N + 1} R_{4N} \left( z \right) < \frac{{\left| {a_{4N} } \right|}}{{z^{2N} }} + \frac{{\left| {a_{4N + 1} } \right|}}{{z^{2N + \frac{1}{2}} }},
\]
\[
 - \frac{{\left| {a_{4N + 2} } \right|}}{{z^{2N + 1} }} < \left( { - 1} \right)^{N + 1} R_{4N + 1} \left( z \right) < \frac{{\left| {a_{4N + 1} } \right|}}{{z^{2N + \frac{1}{2}} }},
\]
\begin{equation}\label{eq65}
0 < \left( { - 1} \right)^N R_{4N + 2} \left( z \right) < \frac{{\left| {a_{4N + 2} } \right|}}{{z^{2N + 1} }} + \frac{{\left| {a_{4N + 3} } \right|}}{{z^{2N + \frac{3}{2}} }}
\end{equation}
and
\[
\max\left(0,\frac{{\left| {a_{4N + 3} } \right|}}{{z^{2N + \frac{3}{2}} }} - \frac{{\left| {a_{4N + 4} } \right|}}{{z^{2N + 2} }} - \frac{{\left| {a_{4N + 5} } \right|}}{{z^{2N + \frac{5}{2}} }}\right) < \left( { - 1} \right)^{N + 1} R_{4N + 3} \left( z \right) < \frac{{\left| {a_{4N + 3} } \right|}}{{z^{2N + \frac{3}{2}} }} - \frac{{\left| {a_{4N + 4} } \right|}}{{z^{2N + 2} }} + \frac{{\left| {a_{4N + 6} } \right|}}{{z^{2N + 3} }},
\]
for $z>0$.

Next, we consider the sector $\pi < \left|\arg z\right| <\frac{3\pi}{2}$. The integral formulas \eqref{eq37}--\eqref{eq38} can be simplified further to the forms
\begin{gather}\label{eq39}
\begin{split}
R_{4N} \left( z \right) = \; & \frac{{\left( { - 1} \right)^{N + 1} }}{{\pi z^{2N} }}\int_0^{ + \infty } {\frac{{t^{2N - 1} e^{ - 2\pi t} }}{{1 + \left( {t/z} \right)^2 }}} \left( { - \Im \Gamma^\ast  \left( {it} \right)} \right)dt + \frac{{\left( { - 1} \right)^{N + 1} }}{{\sqrt 2 \pi z^{2N + \frac{1}{2}} }}\int_0^{ + \infty } {\frac{{t^{2N - \frac{1}{2}} e^{ - 2\pi t} }}{{1 + \left( {t/z} \right)^2 }}} \left( {\Re \Gamma^\ast \left( {it} \right) + \Im \Gamma^\ast \left( {it} \right)} \right)dt
\\ & + \frac{{\left( { - 1} \right)^N }}{{\pi z^{2N + 1} }}\int_0^{ + \infty } {\frac{{t^{2N} e^{ - 2\pi t} }}{{1 + \left( {t/z} \right)^2 }}} \Re \Gamma^\ast \left( {it} \right)dt + \frac{{\left( { - 1} \right)^{N + 1} }}{{\sqrt 2 \pi z^{2N + \frac{3}{2}} }}\int_0^{ + \infty } {\frac{{t^{2N + \frac{1}{2}} e^{ - 2\pi t} }}{{1 + \left( {t/z} \right)^2 }}} \left( {\Re \Gamma^\ast \left( {it} \right) - \Im \Gamma^\ast \left( {it} \right)} \right)dt,
\end{split}
\end{gather}
\begin{gather}\label{eq40}
\begin{split}
R_{4N + 1} \left( z \right) = \; & \frac{{\left( { - 1} \right)^{N + 1} }}{{\sqrt 2 \pi z^{2N + \frac{1}{2}} }}\int_0^{ + \infty } {\frac{{t^{2N - \frac{1}{2}} e^{ - 2\pi t} }}{{1 + \left( {t/z} \right)^2 }}} \left( {\Re \Gamma^\ast \left( {it} \right) + \Im \Gamma^\ast \left( {it} \right)} \right)dt + \frac{{\left( { - 1} \right)^N }}{{\pi z^{2N + 1} }}\int_0^{ + \infty } {\frac{{t^{2N} e^{ - 2\pi t} }}{{1 + \left( {t/z} \right)^2 }}} \Re \Gamma^\ast  \left( {it} \right)dt
\\ & + \frac{{\left( { - 1} \right)^{N + 1} }}{{\sqrt 2 \pi z^{2N + \frac{3}{2}} }}\int_0^{ + \infty } {\frac{{t^{2N + \frac{1}{2}} e^{ - 2\pi t} }}{{1 + \left( {t/z} \right)^2 }}} \left( {\Re \Gamma^\ast \left( {it} \right) - \Im \Gamma^\ast \left( {it} \right)} \right)dt + \frac{{\left( { - 1} \right)^N }}{{\pi z^{2N + 2} }}\int_0^{ + \infty } {\frac{{t^{2N + 1} e^{ - 2\pi t} }}{{1 + \left( {t/z} \right)^2 }}} \left( { - \Im \Gamma^\ast \left( {it} \right)} \right)dt,
\end{split}
\end{gather}
\begin{gather}\label{eq41}
\begin{split}
R_{4N + 2} \left( z \right) = \; & \frac{{\left( { - 1} \right)^N }}{{\pi z^{2N + 1} }}\int_0^{ + \infty } {\frac{{t^{2N} e^{ - 2\pi t} }}{{1 + \left( {t/z} \right)^2 }}} \Re \Gamma^\ast \left( {it} \right)dt + \frac{{\left( { - 1} \right)^{N + 1} }}{{\sqrt 2 \pi z^{2N + \frac{3}{2}} }}\int_0^{ + \infty } {\frac{{t^{2N + \frac{1}{2}} e^{ - 2\pi t} }}{{1 + \left( {t/z} \right)^2 }}} \left( {\Re \Gamma^\ast \left( {it} \right) - \Im \Gamma^\ast \left( {it} \right)} \right)dt
\\ & + \frac{{\left( { - 1} \right)^N }}{{\pi z^{2N + 2} }}\int_0^{ + \infty } {\frac{{t^{2N + 1} e^{ - 2\pi t} }}{{1 + \left( {t/z} \right)^2 }}} \left( { - \Im \Gamma^\ast \left( {it} \right)} \right)dt + \frac{{\left( { - 1} \right)^N }}{{\sqrt 2 \pi z^{2N + \frac{5}{2}} }}\int_0^{ + \infty } {\frac{{t^{2N + \frac{3}{2}} e^{ - 2\pi t} }}{{1 + \left( {t/z} \right)^2 }}} \left( {\Re \Gamma^\ast \left( {it} \right) + \Im \Gamma^\ast \left( {it} \right)} \right)dt
\end{split}
\end{gather}
and
\begin{gather}\label{eq42}
\begin{split}
R_{4N + 3} \left( z \right) = \; & \frac{{\left( { - 1} \right)^{N + 1} }}{{\sqrt 2 \pi z^{2N + \frac{3}{2}} }}\int_0^{ + \infty } {\frac{{t^{2N + \frac{1}{2}} e^{ - 2\pi t} }}{{1 + \left( {t/z} \right)^2 }}} \left( {\Re \Gamma^\ast \left( {it} \right) - \Im \Gamma^\ast \left( {it} \right)} \right)dt + \frac{{\left( { - 1} \right)^N }}{{\pi z^{2N + 2} }}\int_0^{ + \infty } {\frac{{t^{2N + 1} e^{ - 2\pi t} }}{{1 + \left( {t/z} \right)^2 }}} \left( { - \Im \Gamma^\ast \left( {it} \right)} \right)dt
\\ & + \frac{{\left( { - 1} \right)^N }}{{\sqrt 2 \pi z^{2N + \frac{5}{2}} }}\int_0^{ + \infty } {\frac{{t^{2N + \frac{3}{2}} e^{ - 2\pi t} }}{{1 + \left( {t/z} \right)^2 }}} \left( {\Re \Gamma^\ast \left( {it} \right) + \Im \Gamma^\ast  \left( {it} \right)} \right)dt + \frac{{\left( { - 1} \right)^{N + 1} }}{{\pi z^{2N + 3} }}\int_0^{ + \infty } {\frac{{t^{2N + 2} e^{ - 2\pi t} }}{{1 + \left( {t/z} \right)^2 }}} \Re \Gamma^\ast \left( {it} \right)dt,
\end{split}
\end{gather}
for $\pi < \left|\arg z\right| <\frac{3\pi}{2}$. Actually these formulas are true in the wider range $\left|\arg z\right| <\frac{3\pi}{2}$, by taking limits when $\left|\arg z\right| = \frac{\pi}{2}$. Before we give the bounds for the sector $\pi < \left|\arg z\right| <\frac{3\pi}{2}$, we remark that $0 < 1/\left( {1 + \left( {t/z} \right)^2 } \right) < 1$ for $z,t>0$, and by the mean value theorem of integration, the expressions \eqref{eq28}--\eqref{eq34} and Lemma \ref{lemma1}, we obtain the estimate
\[
\left( { - 1} \right)^N R_{4N + 2} \left( z \right) < \frac{{\left| {a_{4N + 2} } \right|}}{{z^{2N + 1} }} + \frac{{\left| {a_{4N + 4} } \right|}}{{z^{2N + 2} }} + \frac{{\left| {a_{4N + 4} } \right|}}{{z^{2N + \frac{5}{2}} }},
\]
for $z>0$. For large $z$ this is a better bound than the previously found estimate \eqref{eq65}. It is elementary to show that
\[
\frac{1}{{\left| {1 + \left( {t/z} \right)^2 } \right|}} \le \begin{cases} \left|\csc \left(2\theta\right) \right| & \; \text{ if } \; \frac{5\pi}{4} < \left|\theta\right| <\frac{3\pi}{2}, \\ 1 & \; \text{ if } \; \pi < \left|\theta\right| \leq \frac{5\pi}{4}, \end{cases}
\]
whence, trivial estimation of \eqref{eq39}--\eqref{eq42} together with the expressions \eqref{eq28}--\eqref{eq34} and Lemma \ref{lemma1} yield the bound
\[
\left| {R_N \left( z \right)} \right| \le \left( {\frac{{\left| {a_N } \right|}}{{\left| z \right|^{\frac{N}{2}} }} + \frac{{\left| {a_{N + 1} } \right|}}{{\left| z \right|^{\frac{{N + 1}}{2}} }} + \frac{{\left| {a_{N + 2} } \right|}}{{\left| z \right|^{\frac{{N + 2}}{2}} }} + \frac{{\left| {a_{N + 3} } \right|}}{{\left| z \right|^{\frac{{N + 3}}{2}} }}} \right) \begin{cases} \left|\csc \left(2\theta\right) \right| & \; \text{ if } \; \frac{5\pi}{4} < \left|\theta\right| <\frac{3\pi}{2}, \\ 1 & \; \text{ if } \; \pi < \left|\theta\right| \leq \frac{5\pi}{4}, \end{cases}
\]
for $N\geq 2$. From the bounds we derived for the range $\left|\arg z\right|<\pi$, it is seen that the condition $\pi < \left|\theta\right| \leq \frac{5\pi}{4}$ in this estimate can be replaced by $\left|\theta\right| \leq \frac{5\pi}{4}$.

It is possible to derive error bounds that are useful in the sectors $\frac{5\pi}{4} < \left|\arg z\right|  < 2\pi $, by introducing the expressions
\[
\Re \Gamma^\ast \left( {it} \right) = \frac{{\Gamma^\ast \left( {it} \right) + \Gamma^\ast \left( { - it} \right)}}{2},\; \Im \Gamma^\ast \left( {it} \right) = \frac{{\Gamma^\ast \left( {it} \right) - \Gamma^\ast \left( { - it} \right)}}{{2i}}
\]
in \eqref{eq39}--\eqref{eq42}, and then rotating the path of integration by $0<\pm \varphi < \frac{\pi}{2}$ to obtain analytic continuation to the sectors $\frac{5\pi}{4} + \varphi  < \arg z  < \frac{3\pi }{2} + \varphi$ and $-\frac{3\pi}{2} + \varphi  < \arg z  < -\frac{5\pi }{4} + \varphi$, respectively. The analysis is similar to the one performed in the case of $b_n\left(\lambda\right)$, but at one point, a simple estimation for
\[
\Gamma^\ast \left(  - \frac{ite^{i\varphi } }{\cos \varphi} \right), \; t>0
\]
is required. To obtain simple bounds for $R_N\left(z\right)$, this estimate should not depend on $\varphi$. Nevertheless, because of the connection formulas \eqref{eq66} and \eqref{eq51}, the bounds we have obtained are sufficient.

\section{Asymptotics for the late coefficients}\label{section4} In this section, we investigate the asymptotic nature of the coefficients $b_n \left( \lambda  \right)$ and $a_n$ as $n\to +\infty$. First, we consider the coefficients $b_n \left( \lambda  \right)$. For our purposes, the most appropriate representation of these coefficients is the second integral formula in \eqref{eq10}. Upon replacing $1/\Gamma^\ast\left(t\right)$ by its representation \eqref{eq45} in this integral, we obtain
\begin{gather}\label{eq48}
\begin{split}
b_n \left( \lambda  \right) = \; & \frac{{\Gamma \left( {n + \frac{1}{2}} \right)\left( {\lambda  - 1} \right)^{2n + 1} }}{{\left( {2\pi \left( {\lambda  - \log \lambda  - 1} \right)} \right)^{\frac{1}{2}} \left( {\lambda  - \log \lambda  - 1} \right)^n }}\\ & \times \left( {\sum\limits_{k = 0}^{K - 1} {\left( { - 1} \right)^k \left( {\lambda  - \log \lambda  - 1} \right)^k a_{2k} \frac{{\Gamma \left( {n - k + \frac{1}{2}} \right)}}{{\Gamma \left( {n + \frac{1}{2}} \right)}}}  + A_K \left( {n,\lambda } \right)} \right),
\end{split}
\end{gather}
for any fixed $1 \le K \le n - 1$, provided that $n\geq 2$. The remainder term $A_K \left( {n,\lambda } \right)$ is given by the integral formula 
\[
A_K\left( {n,\lambda } \right) = \frac{{\left( {\lambda  - \log \lambda  - 1} \right)^{n + \frac{1}{2}} }}{{\Gamma \left( {n + \frac{1}{2}} \right)}}\int_0^{ + \infty } {t^{n - \frac{1}{2}} e^{ - t\left( {\lambda  - \log \lambda  - 1} \right)} \widetilde M_K \left( t \right)dt} .
\]
It was proved in \cite{Nemes} that
\[
\left| {\widetilde M_K \left( t \right)} \right| \le \frac{{\left| {a_{2K} } \right|}}{t^K} + \frac{{\left| {a_{2K + 2} } \right|}}{t^{K + 1}}
\]
for any $t>0$, whence
\begin{equation}\label{eq49}
\left| {A_K \left( {n,\lambda } \right)} \right| \le \left( {\lambda  - \log \lambda  - 1} \right)^K \left| {a_{2K} } \right|\frac{{\Gamma \left( {n - K + \frac{1}{2}} \right)}}{{\Gamma \left( {n + \frac{1}{2}} \right)}} + \left( {\lambda  - \log \lambda  - 1} \right)^{K + 1} \left| {a_{2K + 2} } \right|\frac{{\Gamma \left( {n - K - \frac{1}{2}} \right)}}{{\Gamma \left( {n + \frac{1}{2}} \right)}}.
\end{equation}
Expansions of type \eqref{eq48} are called inverse factorial series in the literature. Numerically, their character is similar to the character of asymptotic power series, because the consecutive gamma functions decrease asymptotically by a factor $n$.

For large $n$, the least value of the bound \eqref{eq49} occurs when
\[ K \approx \left( {n - \frac{1}{2}} \right)\frac{{2\pi }}{{\lambda  - \log \lambda  - 1 + 2\pi }}.
\]
With this choice of $K$, the error bound is
\[
\mathcal{O}\left({n^{ - \frac{3}{2}} \left( {\frac{{\lambda  - \log \lambda  - 1}}{{\lambda  - \log \lambda  - 1 + 2\pi }}} \right)^n }\right).
\]
This is the best accuracy we can achieve using the expansion \eqref{eq48}. Whence, the larger $\lambda$ is the larger $n$ has to be to get a reasonable approximation from \eqref{eq48}.

By extending the sum in \eqref{eq48} to infinity, we arrive at the formal series
\begin{align*}
\left( { - 1} \right)^{n + 1} \frac{{b_n \left( \lambda  \right)}}{{\lambda \left( {\lambda  - 1} \right)^{2n} }} \approx \; & \frac{{\left( { - 1} \right)^n \Gamma \left( {n + \frac{1}{2}} \right)\left( {\frac{1}{\lambda } - 1} \right)}}{{\left( {2\pi \left( {\lambda  - \log \lambda  - 1} \right)} \right)^{\frac{1}{2}} \left( {\lambda  - \log \lambda  - 1} \right)^n }}\left( 1 - \frac{{\left( {\lambda  - \log \lambda  - 1} \right)}}{{12\left( {n - \frac{1}{2}} \right)}} \right.\\ & \left. + \frac{{\left( {\lambda  - \log \lambda  - 1} \right)^2 }}{{288\left( {n - \frac{1}{2}} \right)\left( {n - \frac{3}{2}} \right)}} + \frac{{139\left( {\lambda  - \log \lambda  - 1} \right)^3 }}{{51840\left( {n - \frac{1}{2}} \right)\left( {n - \frac{3}{2}} \right)\left( {n - \frac{5}{2}} \right)}} +  \cdots  \right) .
\end{align*}
This is exactly Dingle's expansion for the late coefficients in the asymptotic series of $\Gamma\left(a,z\right)$ \cite[p. 161]{Dingle}. The mathematically rigorous form of Dingle's series is therefore the formula \eqref{eq48}.

Numerical examples illustrating the efficacy of the expansion \eqref{eq48}, truncated optimally, are given in Table \ref{table1}.

\begin{table*}[!ht]
\begin{center}
\begin{tabular}
[c]{ l r @{\,}c@{\,} l}\hline
 & \\ [-1ex]
 values of $\lambda$ and $K$ & $\lambda=8$, $K=55$ & & \\ [1ex]
 exact numerical value of $b_{100}\left(\lambda\right)$ & $0.7688481106808590674525145642326792894187$ & $\times$ & $10^{257}$ \\ [1ex]
 approximation \eqref{eq48} to $b_{100}\left(\lambda\right)$ & $0.7688481106808590674525145642326792893371$ & $\times$ & $10^{257}$  \\ [1ex]
 error & $0.816$ & $\times$ & $10^{220}$\\ [1ex] 
 error bound using \eqref{eq49} & $0.1582$ & $\times$ & $10^{221}$\\ [1ex] \hline
 & \\ [-1ex]
 values of $\lambda$ and $K$ & $\lambda=10$, $K=48$ & & \\ [1ex]
 exact numerical value of $b_{100}\left(\lambda\right)$ & $0.2328121261355049863437924678844708059197$ & $\times$ & $10^{266}$ \\ [1ex]
 approximation \eqref{eq48} to $b_{100}\left(\lambda\right)$ & $0.2328121261355049863437924678847160113033$ & $\times$ & $10^{266}$ \\ [1ex]
 error & $-0.2452053836$ & $\times$ & $10^{236}$\\ [1ex]
 error bound using \eqref{eq49} & $0.4965854763$ & $\times$ & $10^{236}$\\ [1ex] \hline
 & \\ [-1ex]
 values of $\lambda$ and $K$ &  $\lambda=15$, $K=35$ & & \\ [1ex]
 exact numerical value of $b_{100}\left(\lambda\right)$ & $0.1363711375012746147234623654087290250956$ & $\times$ & $10^{282}$ \\ [1ex]
 approximation \eqref{eq48} to $b_{100}\left(\lambda\right)$ & $0.1363711375012746147228683544438226390236$ & $\times$ & $10^{282}$ \\ [1ex]
 error & $0.5940109649063860720$ & $\times$ & $10^{261}$\\ [1ex] 
 error bound using \eqref{eq49} & $0.11702600900350747722$ & $\times$ & $10^{262}$\\ [-1ex]
 & \\\hline
\end{tabular}
\end{center}
\caption{Approximations for $b_{100}\left(\lambda\right)$ with various $\lambda$, using \eqref{eq48}.}
\label{table1}
\end{table*}

Let us now turn our attention to the coefficients $a_n$. The asymptotic behaviour of $a_{4n}$ and $a_{4n + 2}$ was investigated in the previous paper of the author \cite{Nemes}. Therefore, we consider only $a_{4n+1}$ and $a_{4n + 3}$. Throughout this section, we assume that the indices $4n+1$ and $4n+3$ are at least $2$. From \eqref{eq45}, we have
\[
\Gamma^\ast  \left( { \pm it} \right) = \sum\limits_{k = 0}^{K - 1} {\left( { \mp i} \right)^k \frac{{a_{2k} }}{{t^k }}}  + M_K \left( { \pm it} \right),
\]
for $K\geq 1$, where $M_K \left( { \pm it} \right)$ is defined via analytic continuation. Substituting this expression into the second representation in \eqref{eq18}, one finds
\begin{equation}\label{eq56}
a_{4n + 1}  = \frac{{\left( { - 1} \right)^n }}{{\sqrt 2 \pi \left( {2\pi } \right)^{2n + \frac{1}{2}} }}\sum\limits_{k = 0}^{K - 1} {\left( { - 1} \right)^{\left\lceil {k/2} \right\rceil  + 1} a_{2k} \left( {2\pi } \right)^k \Gamma \left( {2n - k + \frac{1}{2}} \right)}  + A_K \left( {4n + 1} \right)
\end{equation}
and
\begin{equation}\label{eq57}
a_{4n + 3}  = \frac{{\left( { - 1} \right)^n }}{{\sqrt 2 \pi \left( {2\pi } \right)^{2n + \frac{3}{2}} }}\sum\limits_{k = 0}^{K - 1} {\left( { - 1} \right)^{\left\lfloor {k/2} \right\rfloor  + 1} a_{2k} \left( {2\pi } \right)^k \Gamma \left( {2n - k + \frac{3}{2}} \right)}  + A_K \left( {4n + 3} \right),
\end{equation}
provided that $1\leq K \leq 2n$. The remainder terms $A_K \left( {4n + 1} \right)$ and $A_K \left( {4n + 3} \right)$ are given by the integral formulas
\[
A_K \left( {4n + 1} \right) = \frac{{\left( { - 1} \right)^n }}{{2\pi i}}\int_0^{ + \infty } {t^{2n - \frac{1}{2}} e^{ - 2\pi t} \left( {e^{ - \frac{{3\pi i}}{4}} M_K \left( {it} \right) - e^{\frac{{3\pi i}}{4}} M_K \left( { - it} \right)} \right)dt} 
\]
and
\[
A_K \left( {4n + 3} \right) = \frac{{\left( { - 1} \right)^n }}{{2\pi i}}\int_0^{ + \infty } {t^{2n + \frac{1}{2}} e^{ - 2\pi t} \left( {e^{ - \frac{\pi }{4}i} M_K \left( {it} \right) - e^{\frac{\pi }{4}i} M_K \left( { - it} \right)} \right)dt} ,
\]
respectively. It was shown in \cite{Nemes} that
\begin{equation}\label{eq50}
\left| {M_K \left( { \pm it} \right)} \right| \le \frac{{\left( {1 + \zeta \left( K \right)} \right)\Gamma \left( K \right)}}{{\left( {2\pi } \right)^{K + 1} t^K }}\frac{{2\sqrt K  + 1}}{2},
\end{equation}
for $K\geq 2$, from which we obtain the error bounds
\begin{equation}\label{eq52}
\left| {A_K \left( {4n + 1} \right)} \right| \le \frac{{2\sqrt K  + 1}}{{\left( {2\pi } \right)^{2n + \frac{5}{2}} }}\left( {1 + \zeta \left( K \right)} \right)\Gamma \left( K \right)\Gamma \left( {2n - K + \frac{1}{2}} \right)
\end{equation}
and
\begin{equation}\label{eq53}
\left| {A_K \left( {4n + 3} \right)} \right| \le \frac{{2\sqrt K  + 1}}{{\left( {2\pi } \right)^{2n + \frac{7}{2}} }}\left( {1 + \zeta \left( K \right)} \right)\Gamma \left( K \right)\Gamma \left( {2n - K + \frac{3}{2}} \right).
\end{equation}
Here $\zeta$ denotes Riemann's zeta function. An alternative set of approximations can be derived as follows. By \eqref{eq45} and \eqref{eq51}, we have
\[
\Gamma^\ast \left( { \pm it} \right) = \frac{1}{{1 - e^{ - 2\pi t} }}\frac{1}{{\Gamma^\ast \left( { \mp it} \right)}} = \frac{1}{{1 - e^{ - 2\pi t} }}\left( {\sum\limits_{k = 0}^{K - 1} {\left( { \mp i} \right)^k \frac{{a_{2k} }}{{t^k }}}  + \widetilde M_K \left( { \mp it} \right)} \right),
\]
for $K\geq 1$, where $\widetilde M_K \left( { \mp it} \right)$ is defined via analytic continuation. Substituting this expression into the second representation in \eqref{eq18}, one finds
\begin{equation}\label{eq58}
a_{4n + 1}  = \frac{{\left( { - 1} \right)^n }}{{\sqrt 2 \pi \left( {2\pi } \right)^{2n + \frac{1}{2}} }}\sum\limits_{k = 0}^{K - 1} {\left( { - 1} \right)^{\left\lceil {k/2} \right\rceil  + 1} a_{2k} \left( {2\pi } \right)^k \Gamma \left( {2n - k + \frac{1}{2}} \right)\zeta \left( {2n - k + \frac{1}{2}} \right)}  + \widetilde{A}_K \left( {4n + 1} \right)
\end{equation}
and
\begin{equation}\label{eq59}
a_{4n + 3}  = \frac{{\left( { - 1} \right)^n }}{{\sqrt 2 \pi \left( {2\pi } \right)^{2n + \frac{3}{2}} }}\sum\limits_{k = 0}^{K - 1} {\left( { - 1} \right)^{\left\lfloor {k/2} \right\rfloor  + 1} a_{2k} \left( {2\pi } \right)^k \Gamma \left( {2n - k + \frac{3}{2}} \right)\zeta \left( {2n - k + \frac{3}{2}} \right)}  + \widetilde{A}_K \left( {4n + 3} \right),
\end{equation}
provided that $1\leq K \leq 2n$. In deriving these expansions, we have made use of the known integral representation of the Riemann zeta function (see \cite[25.5.E1]{NIST}). The remainder terms $\widetilde{A}_K \left( {4n + 1} \right)$ and $\widetilde{A}_K \left( {4n + 3} \right)$ are given by the integral formulas
\[
\widetilde A_K \left( {4n + 1} \right) = \frac{{\left( { - 1} \right)^n }}{{2\pi i}}\int_0^{ + \infty } {\frac{{t^{2n - \frac{1}{2}} e^{ - 2\pi t} }}{{1 - e^{ - 2\pi t} }}\left( {e^{ - \frac{{3\pi i}}{4}} \widetilde M_K \left( { - it} \right) - e^{\frac{{3\pi i}}{4}} \widetilde M_K \left( {it} \right)} \right)dt} 
\]
and
\[
\widetilde A_K \left( {4n + 3} \right) = \frac{{\left( { - 1} \right)^n }}{{2\pi i}}\int_0^{ + \infty } {\frac{{t^{2n + \frac{1}{2}} e^{ - 2\pi t} }}{{1 - e^{ - 2\pi t} }}\left( {e^{ - \frac{\pi }{4}i} \widetilde M_K \left( { - it} \right) - e^{\frac{\pi }{4}i} \widetilde M_K \left( {it} \right)} \right)dt} ,
\]
respectively. It was proved in \cite{Nemes} that $\widetilde M_K \left( { \pm it} \right)$ satisfies the bound given on the right-hand side of \eqref{eq50}, whence we deduce the estimates
\begin{equation}\label{eq54}
\left| {\widetilde A_K \left( {4n + 1} \right)} \right| \le \frac{{2\sqrt K  + 1}}{{\left( {2\pi } \right)^{2n + \frac{5}{2}} }}\left( {1 + \zeta \left( K \right)} \right)\Gamma \left( K \right)\Gamma \left( {2n - K + \frac{1}{2}} \right)\zeta \left( {2n - K + \frac{1}{2}} \right)
\end{equation}
and
\begin{equation}\label{eq55}
\left| {\widetilde A_K \left( {4n + 3} \right)} \right| \le \frac{{2\sqrt K  + 1}}{{\left( {2\pi } \right)^{2n + \frac{7}{2}} }}\left( {1 + \zeta \left( K \right)} \right)\Gamma \left( K \right)\Gamma \left( {2n - K + \frac{3}{2}} \right)\zeta \left( {2n - K + \frac{3}{2}} \right),
\end{equation}
if $K\geq 2$. For large $n$, the least values of the bounds \eqref{eq52}, \eqref{eq53}, \eqref{eq54} and \eqref{eq55} occur when $K \approx n$. With this choice of $K$, the ratios of the error bounds to the leading terms are $\mathcal{O}\left( {4^{ - n} } \right)$. This is the best accuracy we can achieve using the expansions \eqref{eq56}, \eqref{eq57}, \eqref{eq58} and \eqref{eq59}. Numerical examples are given in Tables \ref{table2} and \ref{table3}.

\begin{table*}[!ht]
\begin{center}
\begin{tabular}
[c]{ l r @{\,}c@{\,} l}\hline
 & \\ [-1ex]
 values of $n$ and $K$ & $n=50$, $K=50$ & &  \\ [1ex]
 exact numerical value of $a_{4n+1}$ & $-0.12659638780775052710147996185410566$ & $\times$ & $10^{77}$ \\ [1ex]
 approximation \eqref{eq56} to $a_{4n+1}$ & $-0.12659638780775052710147996185407205$ & $\times$ & $10^{77}$ \\ [1ex]
 error & $-0.3361$ & $\times$ & $10^{46}$\\ [1ex]
 error bound using \eqref{eq52} & $0.12145$ & $\times$ & $10^{47}$\\ [1ex]
 approximation \eqref{eq58} to $a_{4n+1}$ & $-0.12659638780775052710147996185414229$ & $\times$ & $10^{77}$ \\ [1ex]
 error & $0.3663$ & $\times$ & $10^{46}$\\ [1ex]
 error bound using \eqref{eq54} & $0.12145$ & $\times$ & $10^{47}$\\ [1ex]
 Dingle's approximation \eqref{eq63} to $a_{4n+1}$ & $-0.12659638780775052710147996185410717$ & $\times$ & $10^{77}$ \\ [1ex]
 error & $-0.151$ & $\times$ & $10^{45}$\\ [-1ex]
 & \\\hline
\end{tabular}
\end{center}
\caption{Approximations for $a_{201}$, using optimal truncation.}
\label{table2}
\end{table*}

\begin{table*}[!ht]
\begin{center}
\begin{tabular}
[c]{ l r @{\,}c@{\,} l}\hline
 & \\ [-1ex]
 values of $n$ and $K$ & $n=50$, $K=50$ & &  \\ [1ex]
 exact numerical value of $a_{4n+3}$ & $-0.20462395914727659153115140698806033$ & $\times$ & $10^{78}$ \\ [1ex]
 approximation \eqref{eq57} to $a_{4n+3}$ & $-0.20462395914727659153115140698802838$ & $\times$ & $10^{78}$ \\ [1ex]
 error & $-0.3194$ & $\times$ & $10^{47}$\\ [1ex]
 error bound using \eqref{eq53} & $0.9761$ & $\times$ & $10^{47}$\\ [1ex]
 approximation \eqref{eq59} to $a_{4n+3}$ & $-0.20462395914727659153115140698808575$ & $\times$ & $10^{78}$ \\ [1ex]
 error & $0.2542$ & $\times$ & $10^{47}$\\ [1ex]
 error bound using \eqref{eq55} & $0.9761$ & $\times$ & $10^{47}$\\ [1ex]
 Dingle's approximation \eqref{eq64} to $a_{4n+3}$ & $-0.20462395914727659153115140698805707$ & $\times$ & $10^{78}$ \\ [1ex]
 error & $-0.326$ & $\times$ & $10^{46}$\\ [-1ex]
 & \\\hline
\end{tabular}
\end{center}
\caption{Approximations for $a_{203}$, using optimal truncation.}
\label{table3}
\end{table*}

The formal expansions that Dingle \cite[p. 164]{Dingle} derived for $a_{4n + 1}$ and $a_{4n + 3}$ slightly differ from ours. His results can be written, in our notation,
\begin{equation}\label{eq63}
a_{4n + 1}  \approx \frac{{\left( { - 1} \right)^n }}{{\sqrt 2 \pi \left( {2\pi } \right)^{2n + \frac{1}{2}} }}\sum\limits_{k = 0}^\infty  {\left( { - 1} \right)^{\left\lceil {k/2} \right\rceil  + 1} a_{2k} \left( {2\pi } \right)^k \Gamma \left( {2n - k + \frac{1}{2}} \right)\zeta \left( {2n - k + \frac{3}{2}} \right)} ,
\end{equation}
\begin{equation}\label{eq64}
a_{4n + 3}  \approx \frac{{\left( { - 1} \right)^n }}{{\sqrt 2 \pi \left( {2\pi } \right)^{2n + \frac{3}{2}} }}\sum\limits_{k = 0}^\infty  {\left( { - 1} \right)^{\left\lfloor {k/2} \right\rfloor  + 1} a_{2k} \left( {2\pi } \right)^k \Gamma \left( {2n - k + \frac{3}{2}} \right)\zeta \left( {2n - k + \frac{5}{2}} \right)} .
\end{equation}
These expansions resemble \eqref{eq58} and \eqref{eq59}, but the argument of the zeta functions are shifted by $1$. It is seen from Tables \ref{table2} and \ref{table3}, that using optimal truncation, Dingle's series are slightly better than those we obtained by rigorous methods. To conclude this section, we give a possible explanation of this interesting phenomenon. Our starting point is the exponentially improved asymptotic series, given by Paris and Wood \cite{Paris}
\begin{equation}\label{eq60}
\Gamma \left( { \pm it} \right) \sim \frac{1}{{\sqrt {1 - e^{ - 2\pi t} } }}\sum\limits_{k = 0}^\infty  {\left( { \mp i} \right)^k \frac{{a_{2k} }}{{t^k }}} 
\end{equation}
as $t\to +\infty$. Substituting this expansion into the second representation in \eqref{eq18}, one finds (formally) that
\begin{equation}\label{eq61}
a_{4n + 1}  \approx \frac{{\left( { - 1} \right)^n }}{{\sqrt 2 \pi \left( {2\pi } \right)^{2n + \frac{1}{2}} }}\sum\limits_{k = 0}^\infty  {\left( { - 1} \right)^{\left\lceil {k/2} \right\rceil  + 1} a_{2k} \left( {2\pi } \right)^k \Gamma \left( {2n - k + \frac{1}{2}} \right)\xi \left( {2n - k + \frac{1}{2}} \right)}
\end{equation}
and
\begin{equation}\label{eq62}
a_{4n + 3}  \approx \frac{{\left( { - 1} \right)^n }}{{\sqrt 2 \pi \left( {2\pi } \right)^{2n + \frac{3}{2}} }}\sum\limits_{k = 0}^\infty  {\left( { - 1} \right)^{\left\lfloor {k/2} \right\rfloor  + 1} a_{2k} \left( {2\pi } \right)^k \Gamma \left( {2n - k + \frac{3}{2}} \right)\xi \left( {2n - k + \frac{3}{2}} \right)} ,
\end{equation}
for large $n$. Here, the function $\xi \left( r \right)$ is given by the Dirichlet series
\[
\xi \left( r \right) = \frac{{\left( {2\pi } \right)^r }}{{\Gamma \left( r \right)}}\int_0^{ + \infty } {\frac{{t^{r - 1} e^{ - 2\pi t} }}{{\sqrt {1 - e^{ - 2\pi t} } }}dt}  = \sum\limits_{m = 0}^\infty  {\frac{{\left( {\frac{1}{2}} \right)_m }}{{m!\left( {m + 1} \right)^r }}}  = 1 + \frac{1}{2}\frac{1}{{2^r }} + \frac{3}{8}\frac{1}{{3^r }} + \frac{5}{{16}}\frac{1}{{4^r }} +  \cdots ,
\]
provided that $r > \frac{1}{2}$, and $\left(x\right)_m = \Gamma \left(x + m\right) /\Gamma \left(x\right)$ stands for the Pochhammer symbol. The formal expansions \eqref{eq61} and \eqref{eq62} can be turned into exact results by constructing error bounds for the series \eqref{eq60}, but we do not pursue the details here. Since \eqref{eq60} is a better approximation to $\Gamma \left( { \pm it} \right)$ than the previously used two approximations, we expect that, assuming optimal truncation, \eqref{eq61} and \eqref{eq62} give better estimates than \eqref{eq56} and \eqref{eq57} or \eqref{eq58} and \eqref{eq59}. When $2n-k$ is large
\[
\xi \left( {2n - k + \frac{1}{2}} \right) \approx 1 + \frac{1}{{2^{2n - k + \frac{3}{2}} }} + \frac{9}{8}\frac{1}{{3^{2n - k + \frac{3}{2}} }},\; \zeta \left( {2n - k + \frac{3}{2}} \right) \approx 1 + \frac{1}{{2^{2n - k + \frac{3}{2}} }} + \frac{1}{{3^{2n - k + \frac{3}{2}} }}
\]
and
\[
\xi \left( {2n - k + \frac{3}{2}} \right) \approx 1 + \frac{1}{2^{2n - k + \frac{5}{2}} } + \frac{9}{8}\frac{1}{3^{2n - k + \frac{5}{2}} },\; \zeta \left( {2n - k + \frac{5}{2}} \right) \approx 1 + \frac{1}{2^{2n - k + \frac{5}{2}} } + \frac{1}{3^{2n - k + \frac{5}{2}} }.
\]
Thus the approximate values produced by Dingle's formulas are very close to those given by our improved expansions \eqref{eq61} and \eqref{eq62}, which explains the superiority of his formulas over \eqref{eq56} and \eqref{eq57} or \eqref{eq58} and \eqref{eq59}.

\section{Exponentially improved asymptotic expansions}\label{section5} We shall find it convenient to express our exponentially improved expansions in terms of the (scaled) terminant function, which is defined in terms of the incomplete gamma function as
\[
\widehat T_p \left( w \right) = \frac{{e^{\pi ip} \Gamma \left( p \right)}}{{2\pi i}}\Gamma \left( {1 - p,w} \right) = \frac{e^{\pi ip} w^{1 - p} e^{ - w} }{2\pi i}\int_0^{ + \infty } {\frac{{t^{p - 1} e^{ - t} }}{w + t}dt} \; \text{ for } \; p>0 \; \text{ and } \; \left| \arg w \right| < \pi ,
\]
and by analytic continuation elsewhere. Olver \cite[equations (4.5) and (4.6)]{Olver4} showed that when $p \sim \left|w\right|$ and $w \to \infty$, we have
\begin{equation}\label{eq67}
\widehat T_p \left( w \right) = \begin{cases} \mathcal{O}\left( {e^{ - w - \left| w \right|} } \right) & \; \text{ if } \; \left| {\arg w} \right| \le \pi, \\ \mathcal{O}\left(1\right) & \; \text{ if } \; - 3\pi  < \arg w \le  - \pi. \end{cases}
\end{equation}
Concerning the smooth transition of the Stokes discontinuities, we will use the more precise asymptotic formulas
\begin{equation}\label{eq68}
\widehat T_p \left( w \right) = \frac{1}{2} + \frac{1}{2}\mathop{\text{erf}} \left( {c\left( \varphi  \right)\sqrt {\frac{1}{2}\left| w \right|} } \right) + \mathcal{O}\left( {\frac{{e^{ - \frac{1}{2}\left| w \right|c^2 \left( \varphi  \right)} }}{{\left| w \right|^{\frac{1}{2}} }}} \right)
\end{equation}
for $-\pi +\delta \leq \arg w \leq 3 \pi -\delta$, $0 < \delta  \le 2\pi$; and
\begin{equation}\label{eq69}
e^{ - 2\pi ip} \widehat T_p \left( w \right) =  - \frac{1}{2} + \frac{1}{2}\mathop{\text{erf}} \left( { - \overline {c\left( { - \varphi } \right)} \sqrt {\frac{1}{2}\left| w \right|} } \right) + \mathcal{O}\left( {\frac{{e^{ - \frac{1}{2}\left| w \right|\overline {c^2 \left( { - \varphi } \right)} } }}{{\left| w \right|^{\frac{1}{2}} }}} \right)
\end{equation}
for $- 3\pi  + \delta  \le \arg w \le \pi  - \delta$, $0 < \delta \le 2\pi$. Here $\varphi = \arg w$ and $\mathop{\text{erf}}$ denotes the error function. The quantity $c\left( \varphi  \right)$ is defined implicitly by the equation
\[
\frac{1}{2}c^2 \left( \varphi  \right) = 1 + i\left( {\varphi  - \pi } \right) - e^{i\left( {\varphi  - \pi } \right)},
\]
and corresponds to the branch of $c\left( \varphi  \right)$ which has the following expansion in the neighbourhood of $\varphi = \pi$:
\begin{equation}\label{eq70}
c\left( \varphi  \right) = \left( {\varphi  - \pi } \right) + \frac{i}{6}\left( {\varphi  - \pi } \right)^2  - \frac{1}{{36}}\left( {\varphi  - \pi } \right)^3  - \frac{i}{{270}}\left( {\varphi  - \pi } \right)^4  +  \cdots .
\end{equation}
For complete asymptotic expansions, see Olver \cite{Olver5}. We remark that Olver uses the different notation $F_p \left( w \right) = ie^{ - \pi ip} \widehat T_p \left( w \right)$ for the terminant function and the other branch of the function $c\left( \varphi  \right)$. For further properties of the terminant function, see, for example, Paris and Kaminski \cite[Chapter 6]{Paris2}.

\subsection{Proof of Theorem \ref{thm3}} First, we suppose that $\left|\arg a\right|<\pi$. Let $K \geq 2$ be a fixed integer. Substituting the second expression in \eqref{eq45} into \eqref{eq9} and using the definition of the terminant function we find that
\[
R_N \left( {a,\lambda } \right) = e^{a\left( {\lambda  - \log \lambda  - 1} \right)} \sqrt {\frac{{2\pi }}{a}} \sum\limits_{k = 0}^{K - 1} {\frac{{a_{2k} }}{{a^k }}\widehat T_{N - k + \frac{1}{2}} \left( {a\left( {\lambda  - \log \lambda  - 1} \right)} \right)}  + R_{N,K} \left( {a,\lambda } \right),
\]
with
\begin{gather}\label{eq71}
\begin{split}
R_{N,K} \left( {a,\lambda } \right) & = \frac{{\left( { - 1} \right)^N }}{{a^{N + 1} }}\frac{1}{{\sqrt {2\pi } }}\int_0^{ + \infty } {\frac{{t^{N - \frac{1}{2}} e^{ - t\left( {\lambda  - \log \lambda  - 1} \right)} }}{{1 + t/a}}\widetilde M_K \left( t \right)dt} 
\\ & = \left( { - 1} \right)^N \frac{{e^{ - i\theta \left( {N + 1} \right)} }}{{\sqrt {2\pi r} }}\int_0^{ + \infty } {\frac{{\tau ^{N - \frac{1}{2}} e^{ - r\tau \left( {\lambda  - \log \lambda  - 1} \right)} }}{{1 + \tau e^{ - i\theta } }}\widetilde M_K \left( {r\tau } \right)d\tau } ,
\end{split}
\end{gather}
under the assumption that $K\leq N$. Here we have taken $a=re^{i\theta}$. Using the integral formula \eqref{eq47}, $\widetilde M_K \left( {r\tau } \right)$ can be written as
\begin{align*}
\widetilde M_K \left( {r\tau } \right) = \; & \frac{1}{{2\pi i}}\frac{{\left( { - i} \right)^K }}{{\left( {r\tau } \right)^K }}\int_0^{ + \infty } {\frac{{s^{K - 1} e^{ - 2\pi s} \Gamma^\ast \left( {is} \right)}}{{1 + is/\left( {r\tau } \right)}}ds}  - \frac{1}{{2\pi i}}\frac{{i^K }}{{\left( {r\tau } \right)^K }}\int_0^{ + \infty } {\frac{{s^{K - 1} e^{ - 2\pi s} \Gamma^\ast \left( { - is} \right)}}{{1 - is/\left( {r\tau } \right)}}ds} 
\\ = \; & \frac{1}{{2\pi i}}\frac{{\left( { - i} \right)^K }}{{\left( {r\tau } \right)^K }}\left( {\int_0^{ + \infty } {\frac{{s^{K - 1} e^{ - 2\pi s} \Gamma^\ast  \left( {is} \right)}}{{1 + is/r}}ds}  + \left( {\tau  - 1} \right)\int_0^{ + \infty } {\frac{{s^{K - 1} e^{ - 2\pi s} \Gamma^\ast \left( {is} \right)}}{{\left( {1 - ir\tau /s} \right)\left( {1 + is/r} \right)}}ds} } \right)
\\ & - \frac{1}{{2\pi i}}\frac{{i^K }}{{\left( {r\tau } \right)^K }}\left( {\int_0^{ + \infty } {\frac{{s^{K - 1} e^{ - 2\pi s} \Gamma^\ast \left( { - is} \right)}}{{1 - is/r}}ds}  + \left( {\tau  - 1} \right)\int_0^{ + \infty } {\frac{{s^{K - 1} e^{ - 2\pi s} \Gamma^\ast \left( { - is} \right)}}{{\left( {1 + ir\tau /s} \right)\left( {1 - is/r} \right)}}ds} } \right).
\end{align*}
Noting that
\[
\left| {\frac{1}{{1 \pm is/r}}} \right|,\left| {\frac{1}{{\left( {1 \mp ir\tau /s} \right)\left( {1 \pm is/r} \right)}}} \right| \le 1
\]
for positive $r$, $\tau$ and $s$, substitution into \eqref{eq71} yields the upper bound
\begin{align*}
& \left| {R_{N,K} \left( {a,\lambda } \right)} \right| \le \frac{1}{{\sqrt {2\pi r} }}\left| {\int_0^{ + \infty } {\frac{{\tau ^{N-K - \frac{1}{2}} e^{ - r\tau \left( {\lambda  - \log \lambda  - 1} \right)} }}{{1 + \tau e^{ - i\theta } }}d\tau } } \right|\frac{1}{{2\pi r^K }}\int_0^{ + \infty } {s^{K - 1} e^{ - 2\pi s} \left| {\Gamma^\ast \left( {is} \right)} \right|ds} 
\\ & + \frac{1}{{\sqrt {2\pi r} }}\int_0^{ + \infty } {\tau ^{N -K- \frac{1}{2}} e^{ - r\tau \left( {\lambda  - \log \lambda  - 1} \right)} \left| {\frac{{\tau  - 1}}{{\tau  + e^{i\theta } }}} \right|d\tau } \frac{1}{{2\pi r^K }}\int_0^{ + \infty } {s^{K - 1} e^{ - 2\pi s} \left| {\Gamma^\ast \left( {is} \right)} \right|ds} 
\\ & + \frac{1}{{\sqrt {2\pi r} }}\left| {\int_0^{ + \infty } {\frac{{\tau ^{N -K- \frac{1}{2}} e^{ - r\tau \left( {\lambda  - \log \lambda  - 1} \right)} }}{{1 + \tau e^{ - i\theta } }}d\tau } } \right|\frac{1}{{2\pi }}\frac{1}{{2\pi r^K }}\int_0^{ + \infty } {s^{K - 1} e^{ - 2\pi s} \left| {\Gamma^\ast \left( { - is} \right)} \right|ds} 
\\ & + \frac{1}{{\sqrt {2\pi r} }}\int_0^{ + \infty } {\tau ^{N-K - \frac{1}{2}} e^{ - r\tau \left( {\lambda  - \log \lambda  - 1} \right)} \left| {\frac{{\tau  - 1}}{{\tau  + e^{i\theta } }}} \right|d\tau } \frac{1}{{2\pi r^K }}\int_0^{ + \infty } {s^{K - 1} e^{ - 2\pi s} \left| {\Gamma^\ast  \left( { - is} \right)} \right|ds} .
\end{align*}
Boyd \cite[equation (3.9)]{Boyd} showed that
\[
\frac{1}{{2\pi }}\int_0^{ + \infty } {s^{K - 1} e^{ - 2\pi s} \left| {\Gamma^\ast \left( { \pm is} \right)} \right|ds}  \le \frac{1}{2}\frac{{\left( {1 + \zeta \left( K \right)} \right)\Gamma \left( K \right)}}{{\left( {2\pi } \right)^{K + 1} }},
\]
and since $\left| {\left( {\tau - 1} \right)/\left( {\tau  + e^{i\theta } } \right)} \right| \le 1$, we find that
\begin{align*}
\left| {R_{N,K} \left( {a,\lambda } \right)} \right| \le \; & \left| {\sqrt {\frac{{2\pi }}{a}} e^{a\left( {\lambda  - \log \lambda  - 1} \right)} \widehat T_{N - K + \frac{1}{2}} \left( {a\left( {\lambda  - \log \lambda  - 1} \right)} \right)} \right|\frac{{\left( {1 + \zeta \left( K \right)} \right)\Gamma \left( K \right)}}{{\left( {2\pi } \right)^{K + 1} \left| a \right|^K }}
\\ & + \frac{{\left( {1 + \zeta \left( K \right)} \right)\Gamma \left( K \right)\Gamma \left( {N - K + \frac{1}{2}} \right)}}{{\left( {2\pi } \right)^{K + \frac{3}{2}} \left| a \right|^{N + 1} \left( {\lambda  - \log \lambda  - 1} \right)^{N - K + \frac{1}{2}} }}.
\end{align*}
By continuity, this bound holds in the closed sector $\left|\arg a\right|\leq \pi$. Assume that $N = \left| a \right| \left( {\lambda  - \log \lambda  - 1} \right)+ \rho$ where $\rho$ is bounded. Employing Stirling's formula, we find that
\[
\frac{{\left( {1 + \zeta \left( K \right)} \right)\Gamma \left( K \right)\Gamma \left( {N - K + \frac{1}{2}} \right)}}{{\left( {2\pi } \right)^{K + \frac{3}{2}} \left| a \right|^{N + 1} \left( {\lambda  - \log \lambda  - 1} \right)^{N - K + \frac{1}{2}} }} = \mathcal{O}_{K,\rho } \left( {\frac{{e^{ - \left| a \right|\left( {\lambda  - \log \lambda  - 1} \right)} }}{{\left( {\lambda  - \log \lambda  - 1} \right)^{\frac{1}{2}} \left| a \right|^{K + 1} }}} \right)
\]
as $a\to \infty$. Olver's estimation \eqref{eq67} shows that
\[
\left| {e^{a\left( {\lambda  - \log \lambda  - 1} \right)} \widehat T_{N - K + \frac{1}{2}} \left( {a\left( {\lambda  - \log \lambda  - 1} \right)} \right)} \right| = \mathcal{O}_{K,\rho } \left( {e^{ - \left| a \right|\left( {\lambda  - \log \lambda  - 1} \right)} } \right)
\]
for large $a$. Therefore, we obtain that
\begin{equation}\label{eq72}
R_{N,K} \left( {a,\lambda } \right) = \mathcal{O}_{K,\rho } \left( {\frac{{e^{ - \left| a \right|\left( {\lambda  - \log \lambda  - 1} \right)} }}{{\left| a \right|^{K + \frac{1}{2}} }}} \right)
\end{equation}
as $a\to \infty$ in the sector $\left|\arg a\right|\leq \pi$.

Consider now the sector $\pi < \arg a < 2\pi$. When $a$ enters this sector, the pole in the first integral in \eqref{eq71} crosses the integration path. According to the residue theorem, we obtain
\begin{gather}\label{eq73}
\begin{split}
R_{N,K} \left( {a,\lambda } \right) & = \sqrt {\frac{{2\pi }}{a}} e^{a\left( {\lambda  - \log \lambda  - 1} \right)} \widetilde M_K \left( {ae^{ - \pi i} } \right) + \frac{{\left( { - 1} \right)^N }}{{a^{N + 1} }}\frac{1}{{\sqrt {2\pi } }}\int_0^{ + \infty } {\frac{{t^{N - \frac{1}{2}} e^{ - t\left( {\lambda  - \log \lambda  - 1} \right)} }}{{1 + t/a}}\widetilde M_K \left( t \right)dt} 
\\ & = \sqrt {\frac{{2\pi }}{a}} e^{a\left( {\lambda  - \log \lambda  - 1} \right)} \widetilde M_K \left( {ae^{ - \pi i} } \right) + R_{N,K} \left( {ae^{ - 2\pi i} ,\lambda } \right)
\end{split}
\end{gather}
for $\pi <\arg a< 2\pi$. Let $\delta$ be a fixed small positive real number. If $\pi  \le \arg a \leq 2\pi  - \delta$, then $\widetilde M_K \left( {ae^{ - \pi i} } \right) = \mathcal{O}_{K,\delta} \left( {\left|a\right|^{ - K} } \right)$ as $a\to \infty$, whence by \eqref{eq72}, \eqref{eq73} and a continuity argument, we deduce
\begin{equation}\label{eq74}
R_{N,K} \left( {a,\lambda } \right) = \mathcal{O}_{K,\rho,\delta} \left( {\frac{{e^{\Re \left( a \right)\left( {\lambda  - \log \lambda  - 1} \right)} }}{{\left| a \right|^{K + \frac{1}{2}} }}} \right),
\end{equation}
as $a\to \infty$ in the closed sector $\pi  \le \arg a \leq 2\pi  - \delta$.

The proof of the estimate for the sector $-2\pi + \delta \leq \arg a \leq -\pi$ is completely analogous.

Consider finally the cases $K=0$ and $K=1$. We can write
\[
R_{N,0} \left( {a,\lambda } \right) = e^{a\left( {\lambda  - \log \lambda  - 1} \right)} \sqrt {\frac{{2\pi }}{a}} \sum\limits_{k = 0}^1 {\frac{{a_{2k} }}{{a^k }}\widehat T_{N - k + \frac{1}{2}} \left( {a\left( {\lambda  - \log \lambda  - 1} \right)} \right)}  + R_{N,2} \left( {a,\lambda } \right)
\]
and
\[
R_{N,1} \left( {a,\lambda } \right) = e^{a\left( {\lambda  - \log \lambda  - 1} \right)} \sqrt {\frac{{2\pi }}{a}} \frac{{a_2 }}{a}\widehat T_{N - \frac{1}{2}} \left( {a\left( {\lambda  - \log \lambda  - 1} \right)} \right) + R_{N,2} \left( {a,\lambda } \right).
\]
Employing the previously obtained bounds for $R_{N,2} \left( {a,\lambda } \right)$ and Olver's estimation \eqref{eq67} together with the connection formula for the terminant function \cite[p. 260]{Paris2}, shows that $R_{N,0} \left( {a,\lambda } \right)$ and $R_{N,1} \left( {a,\lambda } \right)$ indeed satisfy the order estimates prescribed in Theorem \ref{thm3}.

\subsection{Proof of Theorem \ref{thm4}} First, we suppose that $\left|\arg z\right|<\frac{\pi}{2}$. We write the remainder $R_2 \left( z \right)$ in the form
\begin{gather}\label{eq75}
\begin{split}
R_2 \left( z \right) = \; & \frac{1}{{2\pi z}}\int_0^{ + \infty } {\frac{{e^{ - 2\pi t} }}{{1 - it/z}}\Gamma^\ast \left( {it} \right)dt} + \frac{1}{{2\pi z}}\int_0^{ + \infty } {\frac{{e^{ - 2\pi t} }}{{1 + it/z}}\Gamma^\ast \left( { - it} \right)dt}  
\\ & - \frac{{1 + i}}{{\sqrt 2 }}\frac{1}{{2\pi z^{\frac{3}{2}} }}\int_0^{ + \infty } {\frac{{t^{\frac{1}{2}} e^{ - 2\pi t} }}{{1 - it/z}}\Gamma^\ast \left( {it} \right)dt} - \frac{{1 - i}}{{\sqrt 2 }}\frac{1}{{2\pi z^{\frac{3}{2}} }}\int_0^{ + \infty } {\frac{{t^{\frac{1}{2}} e^{ - 2\pi t} }}{{1 + it/z}}\Gamma^\ast \left( { - it} \right)dt} .
\end{split}
\end{gather}
From the second representation in \eqref{eq18}, one finds
\[
a_{2n}  = \frac{{i^{n - 1} }}{{2\pi }}\int_0^{ + \infty } {t^{n - 1} e^{ - 2\pi t} \Gamma^\ast \left( {it} \right)dt}  + \frac{{\left( { - i} \right)^{n - 1} }}{{2\pi }}\int_0^{ + \infty } {t^{n - 1} e^{ - 2\pi t} \Gamma^\ast \left( { - it} \right)dt} 
\]
and
\[
a_{2m + 1}  =  - \frac{{1 + i}}{{\sqrt 2 }}\frac{{i^{m - 1} }}{{2\pi }}\int_0^{ + \infty } {t^{m- \frac{1}{2}} e^{ - 2\pi t} \Gamma^\ast \left( {it} \right)dt}  - \frac{{1 - i}}{{\sqrt 2 }}\frac{{\left( { - i} \right)^{m - 1} }}{{2\pi }}\int_0^{ + \infty } {t^{m - \frac{1}{2}} e^{ - 2\pi t} \Gamma^\ast \left( { - it} \right)dt} ,
\]
for any $n,m\geq 1$. Let $N,M\geq 1$ be arbitrary integers. We apply the expansion \eqref{eq12} in \eqref{eq75} together with the above formulas for $a_{2n}$ and $a_{2m+1}$, to obtain
\[
R_2 \left( z \right) = \sum\limits_{n = 1}^{N - 1} {\frac{{a_{2n} }}{{z^n }}}  + \sum\limits_{m = 1}^{M - 1} {\frac{{a_{2m + 1} }}{{z^{m + \frac{1}{2}} }}}  + R_{N,M} \left( z \right)
\]
with
\begin{gather}\label{eq76}
\begin{split}
R_{N,M} \left( z \right) =\; & \frac{{i^{N - 1} }}{{2\pi z^N }}\int_0^{ + \infty } {\frac{{t^{N - 1} e^{ - 2\pi t} }}{{1 - it/z}}\Gamma^\ast \left( {it} \right)dt}  + \frac{{\left( { - i} \right)^{N - 1} }}{{2\pi z^N }}\int_0^{ + \infty } {\frac{{t^{N - 1} e^{ - 2\pi t} }}{{1 + it/z}}\Gamma^\ast \left( { - it} \right)dt} \\ & - \frac{{1 + i}}{{\sqrt 2 }}\frac{{i^{M - 1} }}{{2\pi z^{M + \frac{1}{2}} }}\int_0^{ + \infty } {\frac{{t^{M - \frac{1}{2}} e^{ - 2\pi t} }}{{1 - it/z}}\Gamma^\ast \left( {it} \right)dt}  - \frac{{1 - i}}{{\sqrt 2 }}\frac{{\left( { - i} \right)^{M - 1} }}{{2\pi z^{M + \frac{1}{2}} }}\int_0^{ + \infty } {\frac{{t^{M - \frac{1}{2}} e^{ - 2\pi t} }}{{1 + it/z}}\Gamma^\ast \left( { - it} \right)dt} .
\end{split}
\end{gather}
Let $K,L\geq 2$ be arbitrary fixed integers. We use \eqref{eq45} to expand the scaled gamma functions under the integrals in \eqref{eq76}, and apply the definition of the terminant function to deduce
\begin{align*}
R_{N,M} \left( z \right) =\; & e^{2\pi iz} \sum\limits_{k = 0}^{K - 1} {\frac{{a_{2k} }}{{z^k }}\widehat T_{N - k} \left( {2\pi iz} \right)}  - e^{ - 2\pi iz} \sum\limits_{k = 0}^{K - 1} {\frac{{a_{2k} }}{{z^k }}\widehat T_{N - k} \left( { - 2\pi iz} \right)} 
\\ & - e^{2\pi iz} \sum\limits_{\ell  = 0}^{L - 1} {\frac{{a_{2\ell } }}{{z^\ell  }}\widehat T_{M - \ell  + \frac{1}{2}} \left( {2\pi iz} \right)}  -e^{ - 2\pi iz} \sum\limits_{\ell  = 0}^{L - 1} {\frac{{a_{2\ell } }}{{z^\ell  }} \widehat T_{M - \ell  + \frac{1}{2}} \left( { - 2\pi iz} \right)} 
\\ & + R_{N,M,K,L} \left( z \right),
\end{align*}
with
\begin{gather}\label{eq77}
\begin{split}
& R_{N,M,K,L} \left( z \right) =  \frac{{i^{N - 1} }}{{2\pi z^N }}\int_0^{ + \infty } {\frac{{t^{N - 1} e^{ - 2\pi t} }}{{1 - it/z}}M_K \left( {it} \right)dt}  + \frac{{\left( { - i} \right)^{N - 1} }}{{2\pi z^N }}\int_0^{ + \infty } {\frac{{t^{N - 1} e^{ - 2\pi t} }}{{1 + it/z}}M_K \left( { - it} \right)dt} 
\\ & - \frac{{1 + i}}{{\sqrt 2 }}\frac{{i^{M - 1} }}{{2\pi z^{M + \frac{1}{2}} }}\int_0^{ + \infty } {\frac{{t^{M - \frac{1}{2}} e^{ - 2\pi t} }}{{1 - it/z}}M_L \left( {it} \right)dt}  - \frac{{1 - i}}{{\sqrt 2 }}\frac{{\left( { - i} \right)^{M - 1} }}{{2\pi z^{M + \frac{1}{2}} }}\int_0^{ + \infty } {\frac{{t^{M - \frac{1}{2}} e^{ - 2\pi t} }}{{1 + it/z}}M_L \left( { - it} \right)dt} 
\\ & =  \frac{{i^{N - 1} e^{ - i\theta N} }}{{2\pi }}\int_0^{ + \infty } {\frac{{\tau ^{N - 1} e^{ - 2\pi r\tau } }}{{1 - i\tau e^{ - i\theta } }}M_K \left( {ir\tau } \right)d\tau }  + \frac{{\left( { - i} \right)^{N - 1} e^{ - i\theta N} }}{{2\pi }}\int_0^{ + \infty } {\frac{{\tau ^{N - 1} e^{ - 2\pi r\tau } }}{{1 + i\tau e^{ - i\theta } }}M_K \left( { - ir\tau } \right)d\tau } 
\\ & - \frac{{1 + i}}{{\sqrt 2 }}\frac{{i^{M - 1} e^{ - i\theta \left( {M + \frac{1}{2}} \right)} }}{{2\pi }}\int_0^{ + \infty } {\frac{{\tau ^{M - \frac{1}{2}} e^{ - 2\pi r\tau } }}{{1 - i\tau e^{ - i\theta } }}M_L \left( {ir\tau } \right)d\tau }  - \frac{{1 - i}}{{\sqrt 2 }}\frac{{\left( { - i} \right)^{M - 1} e^{ - i\theta \left( {M + \frac{1}{2}} \right)} }}{{2\pi }}\int_0^{ + \infty } {\frac{{\tau ^{M - \frac{1}{2}} e^{ - 2\pi r\tau } }}{{1 + i\tau e^{ - i\theta } }}M_L \left( { - ir\tau } \right)d\tau } ,
\end{split}
\end{gather}
as long as $K < N$ and $L \leq  M $. Here we have taken $z=re^{i\theta}$. We consider the estimation of the first integral in \eqref{eq77}. In the paper \cite{Nemes}, it was shown that
\begin{align*}
M_K \left( {ir\tau } \right) = \; & \frac{1}{{2\pi i}}\frac{1}{{\left( {r\tau e^{ - i\varphi } } \right)^K }}\left( {\int_0^{ + \infty } {\frac{{s^{K - 1} e^{ - 2\pi se^{i\varphi } } \Gamma^\ast  \left( {ise^{i\varphi } } \right)}}{{1 - se^{i\varphi } /r}}ds}  + \left( {\tau  - 1} \right)\int_0^{ + \infty } {\frac{{s^{K - 1} e^{ - 2\pi se^{i\varphi } } \Gamma^\ast \left( {ise^{i\varphi } } \right)}}{{\left( {1 - r\tau /se^{i\varphi } } \right)\left( {1 - se^{i\varphi } /r} \right)}}ds} } \right)
\\ & - \frac{1}{{2\pi i}}\frac{1}{{\left( { - r\tau } \right)^K }}\left( {\int_0^{ + \infty } {\frac{{s^{K - 1} e^{ - 2\pi s} \Gamma^\ast \left( { - is} \right)}}{{1 + s/r}}ds}  + \left( {\tau  - 1} \right)\int_0^{ + \infty } {\frac{{s^{K - 1} e^{ - 2\pi s} \Gamma^\ast \left( { - is} \right)}}{{\left( {1 + r\tau /s} \right)\left( {1 + s/r} \right)}}ds} } \right),
\end{align*}
with an arbitrary $0<\varphi<\frac{\pi}{2}$. Substitution into the first integral in \eqref{eq77} and trivial estimation yield
\begin{align*}
& \left| {\frac{{i^{N - 1} e^{ - i\theta N} }}{{2\pi }}\int_0^{ + \infty } {\frac{{\tau ^{N - 1} e^{ - 2\pi r\tau } }}{{1 - i\tau e^{ - i\theta } }}M_K \left( {ir\tau } \right)d\tau } } \right| \le \\ & \frac{1}{{2\pi r^K }}\left| {\int_0^{ + \infty } {\frac{{\tau ^{N - K - 1} e^{ - 2\pi r\tau } }}{{1 - i\tau e^{ - i\theta } }}d\tau } } \right|\frac{1}{{2\pi }}\int_0^{ + \infty } {\frac{{s^{K - 1} e^{ - 2\pi s\cos \varphi } \left| {\Gamma^\ast \left( {ise^{i\varphi } } \right)} \right|}}{{\left| {1 - se^{i\varphi } /r} \right|}}ds} 
\\ &  + \frac{1}{{2\pi r^K }}\int_0^{ + \infty } {\tau ^{N - K - 1} e^{ - 2\pi r\tau } \left| {\frac{{\tau  - 1}}{{\tau  + ie^{i\theta } }}} \right|\frac{1}{{2\pi }}\int_0^{ + \infty } {\frac{{s^{K - 1} e^{ - 2\pi s\cos \varphi } \left| {\Gamma^\ast  \left( {ise^{i\varphi } } \right)} \right|}}{{\left| {\left( {1 - r\tau /se^{i\varphi } } \right)\left( {1 - se^{i\varphi } /r} \right)} \right|}}ds} d\tau }
\\ & + \frac{1}{{2\pi r^K }}\left| {\int_0^{ + \infty } {\frac{{\tau ^{N -K - 1} e^{ - 2\pi r\tau } }}{{1 - i\tau e^{ - i\theta } }}d\tau } } \right|\frac{1}{{2\pi }}\int_0^{ + \infty } {\frac{{s^{K - 1} e^{ - 2\pi s} \left| {\Gamma^\ast \left( { - is} \right)} \right|}}{{1 + s/r}}ds} 
\\ &  + \frac{1}{{2\pi r^K }}\int_0^{ + \infty } {\tau ^{N - K - 1} e^{ - 2\pi r\tau } \left| {\frac{{\tau  - 1}}{{\tau  + ie^{i\theta } }}} \right|\frac{1}{{2\pi }}\int_0^{ + \infty } {\frac{{s^{K - 1} e^{ - 2\pi s} \left| {\Gamma^\ast  \left( { - is} \right)} \right|}}{{\left( {1 + r\tau /s} \right)\left( {1 + s/r} \right)}}ds} d\tau } .
\end{align*}
Noting that
\[
\left| {\frac{{\tau  - 1}}{{\tau  + ie^{i\theta } }}} \right| \le 1,\; \frac{1}{{1 + s/r}} \le 1,\; \frac{1}{{\left( {1 + r\tau /s} \right)\left( {1 + s/r} \right)}} \le 1
\]
and
\[
\frac{1}{{\left| {1 - se^{i\varphi } /r} \right|}} \le \csc \varphi ,\; \frac{1}{{\left| {\left( {1 - r\tau /se^{i\varphi } } \right)\left( {1 - se^{i\varphi } /r} \right)} \right|}} \le \csc ^2 \varphi 
\]
for any positive $r$, $s$ and $\tau$, we deduce the upper bound
\begin{align*}
& \left| {\frac{{i^{N - 1} e^{ - i\theta N} }}{{2\pi }}\int_0^{ + \infty } {\frac{{\tau ^{N - 1} e^{ - 2\pi r\tau } }}{{1 - i\tau e^{ - i\theta } }}M_K \left( {ir\tau } \right)d\tau } } \right| \le 
\\ & \frac{1}{{2\pi r^K }}\left| {\int_0^{ + \infty } {\frac{{\tau ^{N - K- 1} e^{ - 2\pi r\tau } }}{{1 - i\tau e^{ - i\theta } }}d\tau } } \right|\frac{{\csc \varphi }}{{2\pi \cos ^K \varphi }}\int_0^{ + \infty } {t^{K - 1} e^{ - 2\pi t} \left| {\Gamma^\ast \left( {\frac{{ite^{i\varphi } }}{{\cos \varphi }}} \right)} \right|dt} 
\\ & + \frac{1}{{2\pi r^K }}\int_0^{ + \infty } {\tau ^{N - K - 1} e^{ - 2\pi r\tau } d\tau } \frac{{\csc ^2 \varphi }}{{2\pi \cos ^K \varphi }}\int_0^{ + \infty } {t^{K - 1} e^{ - 2\pi t} \left| {\Gamma^\ast \left( {\frac{{ite^{i\varphi } }}{{\cos \varphi }}} \right)} \right|dt} 
\\ & + \frac{1}{{2\pi r^K }}\left| {\int_0^{ + \infty } {\frac{{\tau ^{N - K - 1} e^{ - 2\pi r\tau } }}{{1 - i\tau e^{ - i\theta } }}d\tau } } \right|\frac{1}{{2\pi }}\int_0^{ + \infty } {s^{K - 1} e^{ - 2\pi s} \left| {\Gamma^\ast  \left( { - is} \right)} \right|ds} 
\\ & + \frac{1}{{2\pi r^K }}\int_0^{ + \infty } {\tau ^{N -K - 1} e^{ - 2\pi r\tau } d\tau } \frac{1}{{2\pi }}\int_0^{ + \infty } {s^{K - 1} e^{ - 2\pi s} \left| {\Gamma^\ast \left( { - is} \right)} \right|ds} .
\end{align*}
It was proved in \cite{Nemes} that with the choice $\varphi  = \arctan \left( {K^{ - \frac{1}{2}} } \right)$, we have $\csc \varphi \cos ^{ - K} \varphi  \le 2\sqrt K$ and $\csc ^2 \varphi \cos ^{ - K} \varphi  \le 3K$. It was also shown that
\[
\frac{1}{{2\pi }}\int_0^{ + \infty } {t^{K - 1} e^{ - 2\pi t} \left| {\Gamma^\ast \left( {\frac{{ite^{i\varphi } }}{{\cos \varphi }}} \right)} \right|dt} ,\; \frac{1}{{2\pi }}\int_0^{ + \infty } {s^{K - 1} e^{ - 2\pi s} \left| {\Gamma^\ast \left( { - is} \right)} \right|ds}  \le \frac{{\zeta \left( K \right)\Gamma \left( K \right)}}{{\left( {2\pi } \right)^{K + 1} }},
\]
whence we obtain the estimate
\begin{align*}
\left| {\frac{{i^{N - 1} e^{ - i\theta N} }}{{2\pi }}\int_0^{ + \infty } {\frac{{\tau ^{N - 1} e^{ - 2\pi r\tau } }}{{1 - i\tau e^{ - i\theta } }}M_K \left( {ir\tau } \right)d\tau } } \right| \le \; & \left( {2\sqrt K  + 1} \right)\left| {e^{2\pi iz} \widehat T_{N-K}\left( {2\pi iz} \right)} \right|\frac{{\zeta \left( K \right)\Gamma \left( K \right)}}{{\left( {2\pi } \right)^{K + 1} \left| z \right|^K }} \\ & + \left( {3K + 1} \right)\frac{{\Gamma \left( {N - K} \right)\zeta \left( K \right)\Gamma \left( K \right)}}{{\left( {2\pi } \right)^{N + 2} \left| z \right|^N }}.
\end{align*}
Similarly, we have the following upper bounds for the other three integrals in \eqref{eq77}:
\begin{align*}
\left| {\frac{{\left( { - i} \right)^{N - 1} e^{ - i\theta N} }}{{2\pi }}\int_0^{ + \infty } {\frac{{\tau ^{N - 1} e^{ - 2\pi r\tau } }}{{1 + i\tau e^{ - i\theta } }}M_K \left( { - ir\tau } \right)d\tau } } \right| \le \; & \left( {2\sqrt K  + 1} \right)\left| {e^{ - 2\pi iz} \widehat T_{N-K}\left( { - 2\pi iz} \right)} \right|\frac{{\zeta \left( K \right)\Gamma \left( K \right)}}{{\left( {2\pi } \right)^{K + 1} \left| z \right|^K }}
\\ & + \left( {3K + 1} \right)\frac{{\Gamma \left( {N - K} \right)\zeta \left( K \right)\Gamma \left( K \right)}}{{\left( {2\pi } \right)^{N + 2} \left| z \right|^N }},
\end{align*}
\begin{align*}
& \left| { - \frac{{1 + i}}{{\sqrt 2 }}\frac{{i^{M - 1} e^{ - i\theta \left( {M + \frac{1}{2}} \right)} }}{{2\pi }}\int_0^{ + \infty } {\frac{{\tau ^{M - \frac{1}{2}} e^{ - 2\pi r\tau } }}{{1 - i\tau e^{ - i\theta } }}M_L \left( {ir\tau } \right)d\tau } } \right| \le \\ & \left( {2\sqrt L  + 1} \right)\left| {e^{2\pi iz} \widehat T_{M - L + \frac{1}{2}} \left( {2\pi iz} \right)} \right|\frac{{\zeta \left( L \right)\Gamma \left( L \right)}}{{\left( {2\pi } \right)^{L + 1} \left| z \right|^L }} + \left( {3L + 1} \right)\frac{{\Gamma \left( {M - L + \frac{1}{2}} \right)\zeta \left( L \right)\Gamma \left( L \right)}}{{\left( {2\pi } \right)^{M + \frac{5}{2}} \left| z \right|^{M + \frac{1}{2}} }}
\end{align*}
and
\begin{align*}
& \left| { - \frac{{1 - i}}{{\sqrt 2 }}\frac{{\left( { - i} \right)^{M - 1} e^{ - i\theta \left( {M + \frac{1}{2}} \right)} }}{{2\pi }}\int_0^{ + \infty } {\frac{{\tau ^{M - \frac{1}{2}} e^{ - 2\pi r\tau } }}{{1 + i\tau e^{ - i\theta } }}M_L \left( { - ir\tau } \right)d\tau } } \right| \le  \\ & \left( {2\sqrt L  + 1} \right)\left| {e^{ - 2\pi iz} \widehat T_{M - L + \frac{1}{2}} \left( { - 2\pi iz} \right)} \right|\frac{{\zeta \left( L \right)\Gamma \left( L \right)}}{{\left( {2\pi } \right)^{L + 1} \left| z \right|^L }} + \left( {3L + 1} \right)\frac{{\Gamma \left( {M - L + \frac{1}{2}} \right)\zeta \left( L \right)\Gamma \left( L \right)}}{{\left( {2\pi } \right)^{M + \frac{5}{2}} \left| z \right|^{M + \frac{1}{2}} }}.
\end{align*}
Thus, we conclude that
\begin{align*}
\left| {R_{N,M,K,L} \left( z \right)} \right| \le \; & \left( {2\sqrt K  + 1} \right)\left( {\left| {e^{2\pi iz} \widehat T_{N-K}\left( {2\pi iz} \right)} \right| + \left| {e^{ - 2\pi iz} \widehat T_{N-K}\left( { - 2\pi iz} \right)} \right|} \right)\frac{{\zeta \left( K \right)\Gamma \left( K \right)}}{{\left( {2\pi } \right)^{K + 1} \left| z \right|^K }}
\\ & + \left( {6K + 2} \right)\frac{{\Gamma \left( {N - K} \right)\zeta \left( K \right)\Gamma \left( K \right)}}{{\left( {2\pi } \right)^{N + 2} \left| z \right|^N }}
\\ & + \left( {2\sqrt L  + 1} \right)\left( {\left| {e^{2\pi iz} \widehat T_{M - L + \frac{1}{2}} \left( {2\pi iz} \right)} \right| + \left| {e^{ - 2\pi iz} \widehat T_{M - L + \frac{1}{2}} \left( { - 2\pi iz} \right)} \right|} \right)\frac{{\zeta \left( L \right)\Gamma \left( L \right)}}{{\left( {2\pi } \right)^{L + 1} \left| z \right|^L }}
\\ & + \left( {6L + 2} \right)\frac{{\Gamma \left( {M - L + \frac{1}{2}} \right)\zeta \left( L \right)\Gamma \left( L \right)}}{{\left( {2\pi } \right)^{M + \frac{5}{2}} \left| z \right|^{M + \frac{1}{2}} }}.
\end{align*}
By continuity, this bound holds in the closed sector $\left|\arg z\right| \leq \frac{\pi}{2}$. Suppose that $N=2\pi\left|z\right|+\rho$ and $M=2\pi\left|z\right|+\sigma$, where $\rho$ and $\sigma$ are bounded quantities. Applying Stirling's formula, we find that
\[
\left( {6K + 2} \right)\frac{{\Gamma \left( {N - K} \right)\zeta \left( K \right)\Gamma \left( K \right)}}{{\left( {2\pi } \right)^{N + 2} \left| z \right|^N }} = \mathcal{O}_{K,\rho } \left( {\frac{{e^{ - 2\pi \left| z \right|} }}{{\left| z \right|^{K + \frac{1}{2}} }}} \right)
\]
and
\[
\left( {6L + 2} \right)\frac{{\Gamma \left( {M - L + \frac{1}{2}} \right)\zeta \left( L \right)\Gamma \left( L \right)}}{{\left( {2\pi } \right)^{M + \frac{5}{2}} \left| z \right|^{M + \frac{1}{2}} }} = \mathcal{O}_{L,\sigma } \left( {\frac{{e^{ - 2\pi \left| z \right|} }}{{\left| z \right|^{L + \frac{1}{2}} }}} \right),
\]
as $z\to \infty$. Olver's estimation \eqref{eq67} shows that
\[
\left( {2\sqrt K  + 1} \right)\left( {\left| {e^{2\pi iz} \widehat T_{N - K} \left( {2\pi iz} \right)} \right| + \left| {e^{ - 2\pi iz} \widehat T_{N - K} \left( { - 2\pi iz} \right)} \right|} \right)\frac{{\zeta \left( K \right)\Gamma \left( K \right)}}{{\left( {2\pi } \right)^{K + 1} \left| z \right|^K }} = \mathcal{O}_{K,\rho } \left( {\frac{{e^{ - 2\pi \left| z \right|} }}{{\left| z \right|^K }}} \right)
\]
and
\[
\left( {2\sqrt L  + 1} \right)\left( {\left| {e^{2\pi iz} \widehat T_{M - L + \frac{1}{2}} \left( {2\pi iz} \right)} \right| + \left| {e^{ - 2\pi iz} \widehat T_{M - L + \frac{1}{2}} \left( { - 2\pi iz} \right)} \right|} \right)\frac{{\zeta \left( L \right)\Gamma \left( L \right)}}{{\left( {2\pi } \right)^{L + 1} \left| z \right|^L }} = \mathcal{O}_{L,\sigma } \left( {\frac{{e^{ - 2\pi \left| z \right|} }}{{\left| z \right|^L }}} \right),
\]
for large $z$.  Therefore, we have that
\begin{equation}\label{eq78}
R_{N,M,K,L} \left( z \right) = \mathcal{O}_{K,\rho } \left( {\frac{{e^{ - 2\pi \left| z \right|} }}{{\left| z \right|^K }}} \right) + \mathcal{O}_{L,\sigma } \left( {\frac{{e^{ - 2\pi \left| z \right|} }}{{\left| z \right|^L }}} \right)
\end{equation}
as $z\to \infty$ in the sector $\left|\arg z\right| \leq \frac{\pi}{2}$.

Consider now the sector $\frac{\pi}{2}<\arg z< \frac{3\pi}{2}$. When $z$ enters the sector $\frac{\pi}{2}<\arg z< \frac{3\pi}{2}$, the poles in the first and third integrals in \eqref{eq77} cross the integration path. According to the residue theorem, we obtain
\begin{gather}\label{eq79}
\begin{split}
& R_{N,M,K,L} \left( z \right) = e^{2\pi iz} M_K \left( z \right) - e^{2\pi iz} M_L \left( z \right)
\\ & + \frac{{i^{N - 1} }}{{2\pi z^N }}\int_0^{ + \infty } {\frac{{t^{N - 1} e^{ - 2\pi t} }}{{1 - it/z}}M_K \left( {it} \right)dt}  + \frac{{\left( { - i} \right)^{N - 1} }}{{2\pi z^N }}\int_0^{ + \infty } {\frac{{t^{N - 1} e^{ - 2\pi t} }}{{1 + it/z}}M_K \left( { - it} \right)dt} 
\\ & - \frac{{1 + i}}{{\sqrt 2 }}\frac{{i^{M - 1} }}{{2\pi z^{M + \frac{1}{2}} }}\int_0^{ + \infty } {\frac{{t^{M - \frac{1}{2}} e^{ - 2\pi t} }}{{1 - it/z}}M_L \left( {it} \right)dt}  - \frac{{1 - i}}{{\sqrt 2 }}\frac{{\left( { - i} \right)^{M - 1} }}{{2\pi z^{M + \frac{1}{2}} }}\int_0^{ + \infty } {\frac{{t^{M - \frac{1}{2}} e^{ - 2\pi t} }}{{1 + it/z}}M_L \left( { - it} \right)dt} 
\end{split}
\end{gather}
when $\frac{\pi}{2}<\arg z< \frac{3\pi}{2}$. It is easy to see that the sum of the four integrals has the order of magnitude given in the right-hand side of \eqref{eq78}. It follows that when $K=L$, the bound \eqref{eq78} remains valid in the wider sector $-\frac{\pi}{2}\leq \arg z \leq  \frac{3\pi}{2}$. Otherwise, by the connection formula \eqref{eq51}, we have
\[
e^{2\pi iz} M_K \left( z \right) - e^{2\pi iz} M_L \left( z \right) = e^{2\pi iz} \widetilde M_K \left( {ze^{ - \pi i} } \right) - e^{2\pi iz} \widetilde M_L \left( {ze^{ - \pi i} } \right).
\]
If $\frac{\pi}{2}\leq \arg z \leq \frac{3\pi}{2}$, then $\widetilde M_K \left( ze^{ - \pi i} \right) = \mathcal{O}_K\left( {\left|z\right|^{ - K} } \right)$ as $z\to \infty$, whence by continuity
\begin{equation}\label{eq81}
R_{N,M,K,L} \left( z \right) = \mathcal{O}_{K,\rho } \left( {\frac{{e^{ - 2\pi \Im \left( z \right)} }}{{\left|z\right|^K }}} \right) + \mathcal{O}_{L,\sigma } \left( {\frac{{e^{ - 2\pi \Im \left( z \right)} }}{{\left|z\right|^L }}} \right),
\end{equation}
as $z\to \infty$ in the closed sector $\frac{\pi}{2}\leq \arg z \leq \frac{3\pi}{2}$.

Similarly, if $K=L$, the bound \eqref{eq78} remains valid in the wider sector $-\frac{3\pi}{2}\leq \arg z \leq  \frac{\pi}{2}$; and by the foregoing argument, it is true in the larger sector $-\frac{3\pi}{2}\leq \arg z \leq  \frac{3\pi}{2}$. Otherwise, we have
\begin{equation}\label{eq80}
R_{N,M,K,L} \left( z \right) = \mathcal{O}_{K,\rho } \left( {\frac{{e^{ 2\pi \Im \left( z \right)} }}{{\left|z\right|^K }}} \right) + \mathcal{O}_{L,\sigma } \left( {\frac{{e^{ 2\pi \Im \left( z \right)} }}{{\left|z\right|^L }}} \right),
\end{equation}
for large $z$ with $-\frac{3\pi}{2}\leq \arg z \leq -\frac{\pi}{2}$.

Consider now the sector $\frac{3\pi}{2}<\arg z< \frac{5\pi}{2}$. Rotation of the path of integration in the second and the fourth integrals in \eqref{eq79} and application of the residue theorem yields
\begin{gather}\label{eq87}
\begin{split}
 & R_{N,M,K,L} \left( z \right) =  - e^{ - 2\pi iz} M_K \left( { ze^{ - 2\pi i}} \right) - e^{ - 2\pi iz} M_L \left( { ze^{ - 2\pi i}} \right) + e^{2\pi iz} M_K \left( z \right) - e^{2\pi iz} M_L \left( z \right)
\\ & + \frac{{i^{N - 1} }}{{2\pi z^N }}\int_0^{ + \infty } {\frac{{t^{N - 1} e^{ - 2\pi t} }}{{1 - it/z}}M_K \left( {it} \right)dt}  + \frac{{\left( { - i} \right)^{N - 1} }}{{2\pi z^N }}\int_0^{ + \infty } {\frac{{t^{N - 1} e^{ - 2\pi t} }}{{1 + it/z}}M_K \left( { - it} \right)dt} 
\\ & - \frac{{1 + i}}{{\sqrt 2 }}\frac{{i^{M - 1} }}{{2\pi z^{M + \frac{1}{2}} }}\int_0^{ + \infty } {\frac{{t^{M - \frac{1}{2}} e^{ - 2\pi t} }}{{1 - it/z}}M_L \left( {it} \right)dt}  - \frac{{1 - i}}{{\sqrt 2 }}\frac{{\left( { - i} \right)^{M - 1} }}{{2\pi z^{M + \frac{1}{2}} }}\int_0^{ + \infty } {\frac{{t^{M - \frac{1}{2}} e^{ - 2\pi t} }}{{1 + it/z}}M_L \left( { - it} \right)dt} 
\end{split}
\end{gather}
for $\frac{3\pi}{2}<\arg z< \frac{5\pi}{2}$. It is easy to see that the sum of the four integrals has the order of magnitude given in the right-hand side of \eqref{eq78}. It follows that when $K=L$, the bound \eqref{eq80} holds in the sector $\frac{3\pi}{2}\leq \arg z\leq \frac{5\pi}{2}$. Otherwise, by the connection formula \eqref{eq51}, we have
\begin{equation}\label{eq89}
e^{2\pi iz} M_K \left( z \right) - e^{2\pi iz} M_L \left( z \right) = e^{2\pi iz} M_K \left( {ze^{ - 2\pi i} } \right) - e^{2\pi iz} M_L \left( {ze^{ - 2\pi i} } \right).
\end{equation}
If $\frac{3\pi}{2}\leq \arg z \leq \frac{5\pi}{2}$, then $M_K \left( ze^{ -2 \pi i} \right) = \mathcal{O}_K\left( {\left|z\right|^{ - K} } \right)$ as $z\to \infty$, whence by continuity
\begin{equation}\label{eq82}
R_{N,M,K,L} \left( z \right) = \mathcal{O}_{K,\rho } \left( {\frac{{\left| {\sin \left( {2\pi z} \right)} \right|}}{{\left| z \right|^K }}} \right) + \mathcal{O}_{L,\sigma } \left( {\frac{{\left| {\cos \left( {2\pi z} \right)} \right|}}{{\left| z \right|^L }}} \right),
\end{equation}
as $z\to \infty$ in the closed sector $\frac{3\pi}{2}\leq \arg z \leq \frac{5\pi}{2}$.

Similarly, we find that when $K=L$, the estimate \eqref{eq81} holds in the sector $-\frac{5\pi}{2}\leq \arg z \leq -\frac{3\pi}{2}$.  Otherwise, it can be shown that the estimation \eqref{eq82} is valid in this sector too.

Finally, we consider the sector $\frac{5\pi}{2}<\arg z< 3\pi$. Rotating the path of integration in the first and the third integrals in \eqref{eq79} and applying the residue theorem gives
\begin{gather}\label{eq88}
\begin{split}
& R_{N,M,K,L} \left( z \right) = e^{2\pi iz} M_K \left( {ze^{ - 2\pi i} } \right) + e^{2\pi iz} M_L \left( {ze^{ - 2\pi i} } \right)
\\ & - e^{ - 2\pi iz} M_K \left( {ze^{ - 2\pi i} } \right) - e^{ - 2\pi iz} M_L \left( {ze^{ - 2\pi i} } \right) + e^{2\pi iz} M_K \left( z \right) - e^{2\pi iz} M_L\left( z \right)
\\ & + \frac{{i^{N - 1} }}{{2\pi z^N }}\int_0^{ + \infty } {\frac{{t^{N - 1} e^{ - 2\pi t} }}{{1 - it/z}}M_K \left( {it} \right)dt}  + \frac{{\left( { - i} \right)^{N - 1} }}{{2\pi z^N }}\int_0^{ + \infty } {\frac{{t^{N - 1} e^{ - 2\pi t} }}{{1 + it/z}}M_K \left( { - it} \right)dt} 
\\ & - \frac{{1 + i}}{{\sqrt 2 }}\frac{{i^{M - 1} }}{{2\pi z^{M + \frac{1}{2}} }}\int_0^{ + \infty } {\frac{{t^{M - \frac{1}{2}} e^{ - 2\pi t} }}{{1 - it/z}}M_L \left( {it} \right)dt}  - \frac{{1 - i}}{{\sqrt 2 }}\frac{{\left( { - i} \right)^{M - 1} }}{{2\pi z^{M + \frac{1}{2}} }}\int_0^{ + \infty } {\frac{{t^{M - \frac{1}{2}} e^{ - 2\pi t} }}{{1 + it/z}}M_L \left( { - it} \right)dt}
\end{split}
\end{gather}
for $\frac{5\pi}{2}<\arg z< 3\pi$.  Again, the sum of the four integrals has the order of magnitude given in the right-hand side of \eqref{eq78}. If $K=L$, the first two lines in \eqref{eq88} simplify to
\[
2\left( {e^{2\pi iz}  - e^{ - 2\pi iz} } \right)M_K \left( {ze^{ - 2\pi i} } \right).
\]
Let $\delta$ be a fixed small positive real number. If $\frac{5\pi}{2}\leq \arg z \leq 3\pi-\delta$, then $M_K \left( {ze^{ - 2\pi i} } \right) = \mathcal{O}_{K,\delta } \left( {\left|z\right|^{ - K} } \right)$ as $z\to \infty$, whence
\[
R_{N,M,K,L} \left( z \right) = \mathcal{O}_{K,\rho,\delta } \left( {\frac{{e^{ - 2\pi \Im \left( z \right)}  + e^{2\pi \Im \left( z \right)} }}{{\left| z \right|^K }}} \right) = \mathcal{O}_{K,\rho,\delta } \left( {\frac{{e^{2\pi \Im \left( z \right)} }}{{\left| z \right|^K }}} \right)
\]
as $z\to \infty$ in the sector $\frac{5\pi}{2}\leq \arg z \leq 3\pi-\delta$. Otherwise, using \eqref{eq89}, the first two lines in \eqref{eq88} simplify to
\[
\left( {2e^{2\pi iz}  - e^{ - 2\pi iz} } \right)M_K \left( {ze^{ - 2\pi i} } \right) - e^{ - 2\pi iz} M_L \left( {ze^{ - 2\pi i} } \right),
\]
whence
\begin{align*}
R_{N,M,K,L} \left( z \right) & = \mathcal{O}_{K,\rho,\delta } \left( {\frac{{e^{ - 2\pi \Im \left( z \right)}  + e^{2\pi \Im \left( z \right)} }}{{\left| z \right|^K }}} \right) + \mathcal{O}_{L,\sigma,\delta } \left( {\frac{{e^{2\pi \Im \left( z \right)} }}{{\left| z \right|^L }}} \right) \\ & = \mathcal{O}_{K,\rho,\delta } \left( {\frac{{\left| {\sin \left( {2\pi z} \right)} \right|}}{{\left| z \right|^K }}} \right) + \mathcal{O}_{L,\sigma,\delta } \left( {\frac{{\left| {\cos \left( {2\pi z} \right)} \right|}}{{\left| z \right|^L }}} \right),
\end{align*}
as $z\to \infty$ in the sector $\frac{5\pi}{2}\leq \arg z \leq 3\pi-\delta$.

The proof for the sector $-3\pi+\delta \leq \arg z \leq -\frac{5\pi}{2}$ is completely analogous.

To extend the error estimates in Theorem \ref{thm4} to the cases $K=0,1$ and $L=0,1$, we can proceed similarly as in the proof of Theorem \ref{thm3}.

\subsection{Stokes phenomenon and Berry's transition}\label{subsection53} First, we study the Stokes phenomenon related to the asymptotic expansion \eqref{eq43} of $\Gamma \left(a,z\right)$ occurring when $\arg a$ passes through the values $\pm \pi$. In the range $\left|\arg a\right|<\pi$, the asymptotic expansion
\begin{equation}\label{eq90}
\Gamma \left( {a,z} \right) \sim z^a e^{ - z} \sum\limits_{n = 0}^\infty  {\frac{{\left( { - a} \right)^n b_n \left( \lambda  \right)}}{{\left( {z - a} \right)^{2n + 1} }}}
\end{equation}
holds as $a\to \infty$. From \eqref{eq73} we have
\[
R_N \left( {a,\lambda } \right) =R_{N,0} \left( {a,\lambda } \right)= e^{a\left( {\lambda  - \log \lambda  - 1} \right)} \sqrt {\frac{{2\pi }}{a}} \frac{1}{{\Gamma^\ast \left( {ae^{-\pi i} } \right)}} + R_N \left( {ae^{-2\pi i} ,\lambda } \right)
\]
when $\pi <\arg a <2\pi$. Similarly,
\[
R_N \left( {a,\lambda } \right) = e^{a\left( {\lambda  - \log \lambda  - 1} \right)} \sqrt {\frac{{2\pi }}{a}} \frac{1}{{\Gamma^\ast \left( {ae^{\pi i} } \right)}} + R_N \left( {ae^{2\pi i} ,\lambda } \right)
\]
for $-2\pi <\arg a <-\pi$. For the right-hand sides, we can apply the asymptotic expansions of the reciprocal of the scaled gamma function and the incomplete gamma function to deduce that
\[
\Gamma \left( {a,z} \right) \sim z^a e^{ - z} \left( {\sum\limits_{n = 0}^\infty  {\frac{{\left( { - a} \right)^n b_n \left( \lambda  \right)}}{{\left( {z - a} \right)^{2n + 1} }}}  + e^{a\left( {\lambda  - \log \lambda  - 1} \right)} \sqrt {\frac{{2\pi }}{a}} \sum\limits_{k = 0}^\infty  {\frac{{a_{2k} }}{{a^k }}} } \right)
\]
as $a\to \infty$ in the sectors $\pi < \left|\arg a\right| <2\pi$.  Therefore, as the lines $\arg a =\pm \pi$ are crossed, the additional series
\begin{equation}\label{eq91}
{e^{a\left( {\lambda  - \log \lambda  - 1} \right)} \sqrt {\frac{{2\pi }}{a}} \sum\limits_{k = 0}^\infty  {\frac{{a_{2k} }}{{a^k }}} }
\end{equation}
appears in the asymptotic expansion of $\Gamma \left( {a,z} \right)$ beside the original one \eqref{eq90}. We have encountered a Stokes phenomenon with Stokes lines $\arg a = \pm\pi$.

In the important papers \cite{Berry3, Berry2}, Berry provided a new interpretation of the Stokes phenomenon; he found that assuming optimal truncation, the transition between compound asymptotic expansions is of error function type, thus yielding a smooth, although very rapid, transition as a Stokes line is crossed.

Using the exponentially improved expansion given in Theorem \ref{thm3}, we show that the asymptotic expansion \eqref{eq90} of $\Gamma \left( {a,z} \right)$ exhibits the Berry transition between the two asymptotic series across the Stokes lines $\arg a = \pm\pi$. More precisely, we shall find that the first few terms of the series in \eqref{eq91} ``emerge" in a rapid and smooth way as $\arg a$ passes through $\pm \pi$.

From Theorem \ref{thm3}, we conclude that if $N \approx \left| a \right|\left( {\lambda  - \log \lambda  - 1} \right)$, then for large $a$, $\frac{\pi}{2}< \left|\arg a\right| <\frac{3\pi}{2}$, we
have
\[
\Gamma \left( {a,z} \right) \approx z^a e^{ - z} \left( {\sum\limits_{n = 0}^{N - 1} {\frac{{\left( { - a} \right)^n b_n \left( \lambda  \right)}}{{\left( {z - a} \right)^{2n + 1} }}}  + e^{a\left( {\lambda  - \log \lambda  - 1} \right)} \sqrt {\frac{{2\pi }}{a}} \sum\limits_{k = 0} {\frac{{a_{2k} }}{{a^k }}\widehat T_{N - k + \frac{1}{2}} \left( {a\left( {\lambda  - \log \lambda  - 1} \right)} \right)} } \right),
\]
where $\sum_{k=0}$ means that the sum is restricted to the first few terms of the series. Under the above assumption on $N$, from \eqref{eq68}--\eqref{eq70}, the terminant functions have the asymptotic behaviour
\[
\widehat T_{N - k + \frac{1}{2}} \left( {a\left( {\lambda  - \log \lambda  - 1} \right)} \right) \sim \frac{1}{2} \pm \frac{1}{2}\mathop{\text{erf}} \left( {\left( {\theta  \mp \pi } \right)\sqrt {\frac{1}{2}\left| a \right|\left( {\lambda  - \log \lambda  - 1} \right)} } \right)
\]
provided that $\arg a = \theta$ is close to $\pm \pi$, $a$ is large and $k$ is small in comparison with $N$. Therefore, when $\pm \theta  < \pi$, the terminant functions are exponentially small; for $\theta  = \pm \pi$, they are asymptotically $\frac{1}{2}$ up to an exponentially small error; and when $\pm\theta  >  \pi$, the terminant functions are asymptotic to $1$ with an exponentially small error. Thus, the transition across the Stokes lines $\arg a = \pm\pi$ is effected rapidly and smoothly.

Let us now turn our attention to the asymptotic series of $\Gamma\left(z,z\right)$.  In the range $\left|\arg z\right|< \frac{3\pi}{2}$, the asymptotic expansion
\begin{equation}\label{eq92}
\Gamma \left( {z,z} \right) \sim \sqrt {\frac{\pi }{2}} z^{z - \frac{1}{2}} e^{ - z} \sum\limits_{n = 0}^\infty {\frac{{a_n }}{{z^{\frac{n}{2}} }}}
\end{equation}
holds as $z\to \infty$. Employing the continuation formulas \eqref{eq66} and \eqref{eq51}, we find that
\[
\Gamma \left( {z,z} \right) = e^{2\pi iz} \Gamma \left( {ze^{ - 2\pi i} ,ze^{ - 2\pi i} } \right) + \left( {1 - e^{ - 2\pi iz} } \right)\sqrt {2\pi } z^{z - \frac{1}{2}} e^{ - z} \Gamma^\ast \left( {ze^{ - 2\pi i} } \right)
\]
and
\[
\Gamma \left( {z,z} \right) = e^{ - 2\pi iz} \Gamma \left( {ze^{2\pi i} ,ze^{2\pi i} } \right) + \left( {1 - e^{2\pi iz} } \right)\sqrt {2\pi } z^{z - \frac{1}{2}} e^{ - z} \Gamma^\ast \left( {ze^{2\pi i} } \right) .
\]
For the right-hand sides, we can apply the asymptotic expansions of the scaled gamma function and the incomplete gamma function function to obtain that
\begin{equation}\label{eq96}
\Gamma \left( {z,z} \right) \sim \sqrt {\frac{\pi }{2}} z^{z - \frac{1}{2}} e^{ - z} \left( {\sum\limits_{n = 0}^\infty  {\frac{{a_n }}{{z^{\frac{n}{2}} }}}  - 2e^{ - 2\pi iz} \sum\limits_{k = 0}^\infty  {\frac{{a_{2k} }}{{z^k }}} } \right)
\end{equation}
as $z\to \infty$ in the sector $\frac{3\pi}{2}<\arg z<\frac{5\pi}{2}$, and
\begin{equation}\label{eq97}
\Gamma \left( {z,z} \right) \sim \sqrt {\frac{\pi }{2}} z^{z - \frac{1}{2}} e^{ - z} \left( {\sum\limits_{n = 0}^\infty  {\frac{{a_n }}{{z^{\frac{n}{2}} }}}  - 2e^{2\pi iz} \sum\limits_{k = 0}^\infty  {\frac{{a_{2k} }}{{z^k }}} } \right)
\end{equation}
as $z\to \infty$ in the sector $-\frac{5\pi}{2}<\arg z<-\frac{3\pi}{2}$. Therefore, as the line $\arg z = \frac{3\pi}{2}$ is crossed, the additional series
\begin{equation}\label{eq93}
 - 2e^{ - 2\pi iz} \sum\limits_{k = 0}^\infty  {\frac{{a_{2k} }}{{z^k }}} 
\end{equation}
appears in the asymptotic expansion of $\Gamma \left( {z,z} \right)$ beside the original series \eqref{eq92}. Similarly, as we pass through the line $\arg z = -\frac{3\pi}{2}$, the series
\begin{equation}\label{eq94}
 - 2e^{ 2\pi iz} \sum\limits_{k = 0}^\infty  {\frac{{a_{2k} }}{{z^k }}} 
\end{equation}
appears in the asymptotic expansion of $\Gamma \left( {z,z} \right)$ beside the original one \eqref{eq92}. We have encountered a Stokes phenomenon with Stokes lines $\arg z =\pm \frac{3\pi}{2}$. With the aid of the exponentially improved expansion given in Theorem \ref{thm4}, we shall find that the asymptotic series of $\Gamma \left( {z,z} \right)$ shows the Berry transition property: the two series in \eqref{eq93} and \eqref{eq94} emerge in a rapid and smooth way as the Stokes lines $\arg z = \frac{3\pi}{2}$ and $\arg z = -\frac{3\pi}{2}$ are crossed.

Let us assume that in \eqref{eq95} $N,M \approx 2\pi \left|z\right|$ and $K=L$. When $\pi  < \arg z  < 2\pi$, the terms in \eqref{eq95} involving the terminant functions of the argument $2\pi i z$ are exponentially small, and the main contribution comes from the terms involving the terminant functions of the argument $-2\pi i z$. Therefore, from Theorem \ref{thm4}, we deduce that for large $z$, $\pi  < \arg z  < 2\pi$, we have
\[
\Gamma \left( {z,z} \right) \approx \sqrt {\frac{\pi }{2}} z^{z - \frac{1}{2}} e^{ - z} \left( {\sum\limits_{n = 0}^{N - 1} {\frac{{a_{2n} }}{{z^n }}}  + \sum\limits_{n = 0}^{M - 1} {\frac{{a_{2m + 1} }}{{z^{m + \frac{1}{2}} }}}  - 2e^{ - 2\pi iz} \sum\limits_{k = 0} {\frac{{a_{2k} }}{{z^k }}\frac{{\widehat T_{N - k} \left( { - 2\pi iz} \right) + \widehat T_{M - k + \frac{1}{2}} \left( { - 2\pi iz} \right)}}{2}} } \right),
\]
where, as before, $\sum_{k=0}$ means that the sum is restricted to the first few terms of the series. Since $N,M \approx 2\pi \left|z\right|$, from \eqref{eq68} and \eqref{eq70}, the averages of the terminant functions have the asymptotic behaviour
\[
\frac{{\widehat T_{N - k} \left( { - 2\pi iz} \right) + \widehat T_{M - k + \frac{1}{2}} \left( { - 2\pi iz} \right)}}{2} \sim \frac{1}{2} + \frac{1}{2}\mathop{\text{erf}}\left( {\left( {\theta  - \frac{{3\pi }}{2}} \right)\sqrt {\pi \left| z \right|} } \right),
\]
under the conditions that $\arg z = \theta$ is close to $\frac{3\pi}{2}$, $z$ is large and $k$ is small compared to $N$ and $M$. Thus, when $\theta < \frac{3\pi}{2}$, the averages of the terminant functions are exponentially small; for $\theta  =  \frac{3\pi}{2}$, they are asymptotic to $\frac{1}{2}$ with an exponentially small error; and when $\theta >  \frac{3\pi}{2}$, the averages of the terminant functions are asymptotically $1$ up to an exponentially small error. Thus, the transition through the Stokes line $\arg z = \frac{3\pi}{2}$ is carried out rapidly and smoothly.

Similarly, if $N,M \approx 2\pi \left|z\right|$ and $K=L$, then for large $z$, $-2\pi  < \arg z  < -\pi$, we have
\[
\Gamma \left( {z,z} \right) \approx \sqrt {\frac{\pi }{2}} z^{z - \frac{1}{2}} e^{ - z} \left( {\sum\limits_{n = 0}^{N - 1} {\frac{{a_{2n} }}{{z^n }}}  + \sum\limits_{n = 0}^{M - 1} {\frac{{a_{2m + 1} }}{{z^{m + \frac{1}{2}} }}}  - 2e^{2\pi iz} \sum\limits_{k = 0} {\frac{{a_{2k} }}{{z^k }}\frac{{ - \widehat T_{N - k} \left( {2\pi iz} \right) + \widehat T_{M - k + \frac{1}{2}} \left( {2\pi iz} \right)}}{2}} } \right).
\]
From \eqref{eq69} and \eqref{eq70}, the averages of the terminant functions have the asymptotic behaviour
\[
\frac{{ - \widehat T_{N - k} \left( {2\pi iz} \right) + \widehat T_{M - k + \frac{1}{2}} \left( {2\pi iz} \right)}}{2} \sim \frac{1}{2} - \frac{1}{2}\mathop{\text{erf}}\left( {\left( {\theta  + \frac{{3\pi }}{2}} \right)\sqrt {\pi \left| z \right|} } \right),
\]
provided that $N,M \approx 2\pi \left|z\right|$, $\arg z = \theta$ is close to $-\frac{3\pi}{2}$, $z$ is large and $k$ is small compared to $N$ and $M$. Therefore, when $\theta > - \frac{3\pi}{2}$, the averages of the terminant functions are exponentially small; for $\theta  =  -\frac{3\pi}{2}$, they are asymptotic to $\frac{1}{2}$ up to an exponentially small error; and when $\theta <  -\frac{3\pi}{2}$, the averages of the terminant functions are asymptotically $1$ with an exponentially small error. Thus, the transition through the Stokes line $\arg z = -\frac{3\pi}{2}$ is effected rapidly and smoothly.

We note that from the expansions \eqref{eq96} and \eqref{eq97}, it follows that \eqref{eq92} is an asymptotic series of $\Gamma \left( z,z \right)$ in the wider range $\left|\arg z\right| \leq 2\pi -\delta < 2\pi$, with any fixed $0 < \delta \leq 2\pi$.

\appendix

\section{Computation of the coefficients $b_n\left(\lambda\right)$ and $a_n$}\label{appendixa}

In this appendix we collect some formulas for the computation of the coefficients that appear in the asymptotic expansions of the incomplete gamma function.

\subsection{The coefficients $b_n\left(\lambda\right)$} The simplest way to generate the polynomials $b_n\left(\lambda\right)$ is to use the recurrence
\[
b_n \left( \lambda  \right) = \lambda \left( {1 - \lambda } \right)b'_{n - 1} \left( \lambda  \right) + \left( {2n - 1} \right)\lambda b_{n - 1} \left( \lambda  \right),
\]
with $b_0 \left( \lambda  \right)=1$ \cite[8.11.E9]{NIST}.

There has been a recent interest in finding explicit formulas for the coefficients in asymptotic expansions of Laplace-type integrals (see \cite{Lopez}, \cite{Nemes2}, \cite{Wojdylo1} and \cite{Wojdylo2}). There are two general formulas for these coefficients, one containing potential polynomials and one containing Bell polynomials. We derive them here for the special case of the coefficients $b_n\left(\lambda\right)$. Let
\[
\lambda e^t  - t - \lambda  = \sum\limits_{j = 0}^\infty  {a_j t^{j + 1} } ,
\]
so that
\[
a_0  = \lambda  - 1,\; a_j  = \frac{\lambda }{{\left( {j + 1} \right)!}} \; \text{ for } \; j \ge 1.
\]
Let $0 \leq i \leq j$ be integers and $\rho$ be a complex number. We define the potential polynomials
\[
\mathsf{A}_{\rho ,j}  = \mathsf{A}_{\rho ,j} \left( {\frac{{a_1 }}{{a_0 }},\frac{{a_2 }}{{a_0 }}, \ldots ,\frac{{a_k }}{{a_0 }}} \right)
\]
and the Bell polynomials
\[
\mathsf{B}_{j,i}  = \mathsf{B}_{j,i} \left( {a_1 ,a_2 , \ldots ,a_{j - i + 1} } \right)
\]
via the expansions
\begin{equation}\label{eq99}
\left( {1 + \sum\limits_{j = 1}^\infty  {\frac{{a_j }}{{a_0 }}t^j } } \right)^\rho  = \sum\limits_{j = 0}^\infty  {\mathsf{A}_{\rho ,j} t^j } \; \text{ and } \; \mathsf{A}_{\rho ,j}  = \sum\limits_{i = 0}^j {\binom{\rho}{i}\frac{1}{a_0^i}\mathsf{B}_{j,i}} .
\end{equation}
Naturally, these polynomials can be defined for arbitrary power series with $a_0\neq 0$. It is possible to express the potential polynomials with complex parameter in terms of potential polynomials with integer parameter using the following formula of Comtet \cite[p. 142]{Comtet}
\begin{equation}\label{eq100}
\mathsf{A}_{\rho ,j}  = \frac{\Gamma \left( { - \rho  + j + 1} \right)}{j!\Gamma \left( { - \rho } \right)}\sum\limits_{i = 0}^j {\frac{\left( { - 1} \right)^i}{- \rho  + i}\binom{j}{i}\mathsf{A}_{i,j} } .
\end{equation}
With these notations we can write the first representation in \eqref{eq10} as
\[
b_n \left( \lambda  \right) = \left( { - 1} \right)^n \left( {\lambda  - 1} \right)^n n!\mathsf{A}_{ - n - 1,n} 
\]
Using \eqref{eq99} and \eqref{eq100} we find
\[
b_n \left( \lambda  \right) = \sum\limits_{k = 0}^n {\left( { - 1} \right)^{n + k} \left( {\lambda  - 1} \right)^{n - k} } \frac{{\left( {n + k} \right)!}}{{k!}}\mathsf{B}_{n,k} 
\]
and
\begin{equation}\label{eq101}
b_n \left( \lambda  \right) = \frac{{\left( {2n + 1} \right)!}}{{n!}}\left( {\lambda  - 1} \right)^n \sum\limits_{k = 0}^n {\frac{{\left( { - 1} \right)^{n + k} }}{{n + k + 1}}\binom{n}{k}\mathsf{A}_{k,n} } .
\end{equation}
The quantities $\mathsf{B}_{n,k}$ and $\mathsf{A}_{k,n}$ appearing in these formulas may be computed from the recurrence relations
\[
\mathsf{B}_{n,k}  = \sum\limits_{j= 1}^{n - k + 1} {a_j \mathsf{B}_{n- j,k - 1} } \; \text{ and } \; \mathsf{A}_{k,n}  = \sum\limits_{j = 0}^{n} {\frac{a_j}{a_0}\mathsf{A}_{k - 1,n - j} }
\]
with $\mathsf{B}_{0,0} = \mathsf{A}_{0,0} = 1$, $\mathsf{B}_{j,0} = \mathsf{A}_{0,j} = 0$ $\left(j \ge 1\right)$, $\mathsf{B}_{j,1} = a_0 \mathsf{A}_{1,j} = a_j$ (see Nemes \cite{Nemes2}). Finally, we show that the potential polynomials $\mathsf{A}_{k,n}$ in \eqref{eq101} can be written in terms of the Stirling numbers of the second kind (see, e.g., Comtet \cite[pp. 204--212]{Comtet} or \cite[26.8.i]{NIST}). Indeed, a straightforward computation gives
\begin{align*}
& \mathsf{A}_{k,n}  = \frac{1}{{2\pi i}}\oint_{\left( {0^ +  } \right)} {\left( {\frac{{\lambda e^t  - t - \lambda }}{{\left( {\lambda  - 1} \right)t}}} \right)^k \frac{{dt}}{{t^{n + 1} }}} = \frac{1}{{\left( {\lambda  - 1} \right)^k }}\frac{1}{{2\pi i}}\oint_{\left( {0^ +  } \right)} {\left( {\lambda \left( {e^t  - 1} \right) - t} \right)^k \frac{{dt}}{{t^{n + k + 1} }}} 
\\ & = \frac{1}{{\left( {\lambda  - 1} \right)^k }}\frac{1}{{2\pi i}}\oint_{\left( {0^ +  } \right)} {\sum\limits_{j = 0}^k {\left( { - 1} \right)^{k - j} \binom{k}{j}\lambda ^j \left( {\frac{{e^t  - 1}}{t}} \right)^j } \frac{{dt}}{{t^{n + 1} }}} 
\\ & = \frac{1}{{\left( {\lambda  - 1} \right)^k }}\frac{1}{{2\pi i}}\oint_{\left( {0^ +  } \right)} {\sum\limits_{j = 0}^k {\left( { - 1} \right)^{k - j} \binom{k}{j}\lambda ^j \sum\limits_{i = 0}^\infty  {j!S\left( {i + j,j} \right)\frac{{t^i }}{{\left( {i + j} \right)!}}} } \frac{{dt}}{{t^{n + 1} }}} 
\\ & = \frac{1}{{\left( {\lambda  - 1} \right)^k }}\sum\limits_{j = 0}^k {\left( { - 1} \right)^{k - j} \binom{k}{j}\lambda ^j \frac{{j!}}{{\left( {n + j} \right)!}}S\left( {n + j,j} \right)} ,
\end{align*}
leading to the explicit formula
\[
b_n \left( \lambda  \right) = \left( { - 1} \right)^n \left( {2n + 1} \right)!\sum\limits_{k = 0}^n {\frac{{\left( {\lambda  - 1} \right)^{n - k} }}{{\left( {n + k + 1} \right)\left( {n - k} \right)!}}\sum\limits_{j = 0}^k {\left( { - 1} \right)^j \lambda ^j \frac{{S\left( {n + j,j} \right)}}{{\left( {k - j} \right)!\left( {n + j} \right)!}}} } .
\]
\subsection{The coefficients $a_n$} Based on the first representation in \eqref{eq18}, Lagrange's inversion formula and a result of Brassesco and M\'{e}ndez \cite{Brassesco}, we find that
\[
a_n  = \frac{{2^{\frac{n}{2} + 1} }}{{\sqrt \pi  }}\Gamma \left( {\frac{{n + 3}}{2}} \right)\frac{{b_{n + 1} }}{{\left( {n + 1} \right)!}},
\]
where the sequence $b_n$ is given by the recurrence
\[
b_n  = \frac{{2 - n}}{{3n + 3}}b_{n - 1}  - \frac{1}{2}\sum\limits_{k = 2}^{n - 3} {b_{k + 1} b_{n - k} } 
\]
with $b_1=1$, $b_2  =  - \frac{1}{6}$. Note that Brassesco and M\'{e}ndez use the slightly different notation $\widetilde{b}_n$. Like for the $b_n \left( \lambda  \right)$'s, it is possible to derive explicit formulas for the coefficients $a_n$ too. The main calculations were done in the paper \cite{Nemes2}, we just write down the final result:
\[
a_n  = \sum\limits_{k = 0}^n {\frac{{2^{\frac{n}{2} + k + 1} \Gamma \left( {\frac{{3n}}{2} + \frac{3}{2}} \right)}}{{\sqrt \pi  \left( {n + 2k + 1} \right)\left( {n - k} \right)!}}\sum\limits_{j = 0}^k {\frac{{\left( { - 1} \right)^j S\left( {n + k + j,j} \right)}}{{\left( {k - j} \right)!\left( {n + k + j} \right)!}}} }.
\]

\end{document}